\renewcommand*{\backrefalt}[4]{
    \ifcase #1 Not cited.%
          \or Cited on page~#2.%
          \else Cited on pages #2.%
    \fi} 
\def\namedlabel#1#2{\begingroup
    #2%
    \def\@currentlabel{#2}%
    \phantomsection\label{#1}\endgroup
}
\newcommand{\htodo}[2][inline]{\todo[linecolor=blue,backgroundcolor=blue!25,bordercolor=blue,#1,shadow]{\tiny #2}}
\newcommand{\jtodo}[2][inline]{\todo[linecolor=red,backgroundcolor=red!25,bordercolor=red,#1,shadow]{\tiny #2}}
\newcommand{\TITLE}{Critical Point Criteria and Dynamically Monogenic Polynomials}
\newcommand{\TITLERUNNING}{}
\theoremstyle{plain}
\newtheorem{theorem}{Theorem}
\newtheorem{lemma}[theorem]{Lemma}
\newtheorem{corollary}[theorem]{Corollary}
\newtheorem{condition}[theorem]{Condition}
\theoremstyle{definition}
\newtheorem{definition}[theorem]{Definition}
\theoremstyle{remark}
\newtheorem{remark}[theorem]{Remark}
\newtheorem{example}[theorem]{Example}
\newtheorem{question}[theorem]{Question}
\numberwithin{theorem}{section}
\newcommand{\tightoverset}[2]{%
  \mathop{#2}\limits^{\vbox to -.5ex{\kern-1.05ex\hbox{$#1$}\vss}}}
\numberwithin{equation}{section} 
\newcommand{\gm}{{\mathfrak{m}}}
\newcommand{\gp}{{\mathfrak{p}}}
\newcommand{\gP}{{\mathfrak{P}}}
\def\Ocal{{\mathcal O}}
\def\Pcal{{\mathcal P}}
\newcommand{\FF}{\mathbb{F}}
\newcommand{\QQ}{\mathbb{Q}}
\newcommand{\ZZ}{\mathbb{Z}}
\newcommand{\tensor}{\otimes}
\newcommand{\dnd}{\nmid}
\newcommand{\ol}[1]{\overline{#1}}
\newcommand{\inv}{^{-1}}
\newcommand{\Gal}{\operatorname{Gal}}
\newcommand{\Mon}{\operatorname{Mon}}
\newcommand{\Norm}{\operatorname{Norm}}
\title[\TITLERUNNING]{\TITLE}
\author[Joachim K\"onig]{Joachim K\"onig}
\address{Department of Mathematics Education\\
Korea National University of Education\\
250 Taeseongtabyeon-ro\\
Cheongju 28173, South Korea}
\email{jkoenig@knue.ac.kr}
\author[Hanson Smith]{Hanson Smith}
\address{Department of Mathematics\\
California State University San Marcos\\
333 S. Twin Oaks Valley Rd.\\
San Marcos, CA 92096}
\email{hsmith@csusm.edu}
\author[Zack Wolske]{Zack Wolske}
\address{Toronto, Ontario}
\email{zackwolske@gmail.com}
\keywords{Monogenic, Power integral basis, Arithmetic dynamics, Post Critically Finite (PCF)}
\subjclass[2020]{11R04, 11R21, 37P05}
\begin{document}

\sloppy 


\baselineskip=17pt


\begin{abstract}
Let $K$ be a number field with ring of integers $\mathcal{O}_K$, and let $f(x)\in\mathcal{O}_K[x]$ be a monic, irreducible polynomial. We establish necessary and sufficient conditions in terms of the critical points of $f(x)$ for the iterates of $f(x)$ to be monogenic polynomials. More generally, we give necessary and sufficient conditions for the backwards orbits of elements of $\mathcal{O}_K$ under $f(x)$ to be monogenerators. We apply our criteria to construct novel examples of dynamically monogenic polynomials, yielding infinite towers of monogenic number fields with the backward orbit of one monogenerator giving a monogenerator at the next level.
\end{abstract}

\maketitle


\section{Introduction and main result}

Let $K$ be a number field with ring of integers $\Ocal_K$. We say $K$ is \textit{monogenic} if there exists some $\alpha$ in $\Ocal_K$ so that $\Ocal_K=\ZZ[\alpha]$. In other words, $K$ is monogenic if there exists some algebraic integer $\alpha$ such that the powers of $\alpha$ form a power integral basis $\{1,\alpha,\alpha^2,\cdots \alpha^{n-1}\}$ for $\Ocal_K$. In this case, we say that $\alpha$ is a \textit{monogenerator}, and we call the minimal polynomial of $\alpha$ a \textit{monogenic polynomial}. More generally, if $L$ is an extension of $K$, we say that \textit{$L$ is monogenic over $K$} if there exists an $\alpha\in\Ocal_L$ such that $\Ocal_L=\Ocal_K[\alpha]$. 

Questions regarding monogenicity\footnote{Also called `monogeneity' or `monogenity' in addition to some other less common terms.} are both theoretically and historically fundamental to algebraic number theory. Especially with the recent growth of arithmetic dynamics, it is natural to study the monogenicity of iterates of polynomials. 

In this work, we present succinct necessary and sufficient criteria for when the backward orbits of an algebraic integer under a monic, integral polynomial are monogenerators. More informally and less generally, we answer the question of when all iterates of a polynomial are monogenic. 

Before stating these criteria, we need a few definitions. Let $K$ be a number field and let $\alpha$ be integral and algebraic over $K$. If $\gp$ is a prime ideal of $\Ocal_K$, write $\Ocal_\gp$ to indicate $\Ocal_K$ adjoined the inverse of every element not in $\gp$, and write $k_\gp$ to indicate the residue field $\Ocal_K/\gp$. Let $L$ denote $K(\alpha)$. We say that $\Ocal_K[\alpha]$ is \textit{$\gp$-maximal} if $\Ocal_K[\alpha]\tensor_{\Ocal_K} \Ocal_\gp=\Ocal_L\tensor_{\Ocal_K} \Ocal_\gp$. When $K=\QQ$, it is equivalent to require that $\gp=p$ does not divide the index $\big[\Ocal_{\QQ(\alpha)}:\ZZ[\alpha]\big]$. If $\Ocal_K[\alpha]$ is $\gp$-maximal for each prime $\gp$, then $\Ocal_K[\alpha]=\Ocal_L$ and $\alpha$ is a monogenerator. If $\gP$ is a prime of $L$ above $\gp$, we write $e(\gP\mid \gp)$ for the ramification index. As usual, given a polynomial $f(x)$, we let $f^n(x)$ denote the $n$-fold composition, and for a prime $\gp$ of $\Ocal_K$, we let $v_\gp: K\to \mathbb{Z}\cup\{\infty\}$ denote the $\gp$-adic valuation.

Our main theorem allows a uniform treatment of all but finitely many primes, all of which are relatively small. We call these finitely many primes \textit{exceptional}. See Condition \ref{Cond: NonExceptional} for a precise definition.
In particular, $\gp$ is non-exceptional if $\operatorname{char} k_\gp > \deg(f)$ or if each irreducible factor of $f'(x)$ in $K[x]$ remains separable in $k_\gp[x]$. We are now able to state the main result:

\begin{theorem}[The Critical Point Criteria for Monogenicity]\label{Thm: MainRelativeOrbits}
Let $K$ be a number field. Fix $N > 0$. Suppose $f(x)\in \Ocal_K[x]$ has a leading coefficient that is a unit, and let $a\in \Ocal_K$ be such that $f^n(x)-a$ is irreducible over $K$ for $1\leq n\leq N$. Let $\gp$ be a prime of $\Ocal_K$. There exists a polynomial $g(x):=g_{\gp}(x)\in \Ocal_K[x]$ and a natural number $N_\gp$ such that the following holds: Let $M$ be an extension of $K$ where $g(x)$ splits and $\gP$ a prime of $M$ above $\gp$. Then the polynomial $f^n(x)-a$ is $\gp$-maximal for each $1\le n\le N$ if and only if for each root $\theta\in M$ of $g(x)$ and each $1\le n\le N_\gp$, one has $v_\gP(f^n(\theta)-a)\le e(\gP\mid \gp)$. More concretely,

\begin{enumerate}[label=\textbf{\arabic*.}, ref=\arabic*]

\item If $\gp$ is non-exceptional, one may choose $N_\gp = N$ and $g(x)=f'(x)$ (independently of $\gp$ and $a$). \label{Main1b.}

\item If $\gp$ is exceptional but is such that $f'(x)\not\equiv 0\bmod \gp$, one may choose $N_\gp = N$ and $g(x)=\prod_{i=1}^r \phi_i(x)$ (independently of $a$), where the $\phi_i(x)$ are lifts of the distinct irreducible factors of $f'(x)$ in $k_{\gp}[x]$ to $\Ocal_K[x]$. \label{Main1.}

\item If $\gp$ is such that $f'(x)\equiv 0\bmod \gp$, one may choose $g(x)$ fulfilling $f(x)-a = g(x)^p$ in $k_\gp[x]$ with $p = \operatorname{char}k_\gp$ and $N_{\gp}=1$. \label{Main2.}

\end{enumerate}
\end{theorem}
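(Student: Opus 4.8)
The plan is to reduce the $\gp$-maximality of the iterates $f^n(x)-a$ to a statement about critical points via the Dedekind/Ore criterion, then track how critical points of $f^n$ arise from critical points of $f$ under the chain rule. Recall that for a monic (or unit-leading-coefficient) polynomial $h(x)\in\Ocal_K[x]$, the ring $\Ocal_K[x]/(h(x))$ fails to be $\gp$-maximal exactly when $\gp$ divides a suitable ``Dedekind index''; concretely, writing $\bar h = \prod \bar h_i^{e_i}$ in $k_\gp[x]$ and lifting, $\gp$-maximality is governed by the valuations $v_\gP$ of $h$ evaluated at roots of the radical $\prod h_i$, or equivalently by whether the repeated factors of $\bar h$ lift to genuine ramification. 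The first step is therefore to set up this local criterion carefully over a general number field $K$ (not just $\QQ$), phrasing it as: $\Ocal_K[\theta]$ with $\theta$ a root of $h$ is $\gp$-maximal $\iff$ for every prime $\gP$ of the splitting data lying over $\gp$ and every root $\theta$ of the relevant factor, $v_\gP(h(\theta)) \le e(\gP\mid\gp)$. This is essentially a restatement of Ore's theorem / the Montes–Dedekind criterion, and I would cite it from the literature and adapt the hypotheses.

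The second and central step is the chain-rule bookkeeping. Since $(f^n)'(x) = \prod_{j=0}^{n-1} f'(f^j(x))$, the critical points of $f^n$ are precisely the preimages $f^{-j}(\{\text{critical points of } f\})$ for $0\le j\le n-1$. A root $\theta$ of $g(x)$ — which in all three cases is built to ``see'' the critical points of $f$ modulo $\gp$ — then pushes forward under iteration: the condition $v_\gP(f^n(\theta)-a)\le e(\gP\mid\gp)$ for all $n\le N_\gp$ captures exactly the failure of $f^n(x)-a$ to pick up a repeated factor modulo $\gp$ coming from that critical point. The key point to verify is that no \emph{new} sources of non-maximality appear: any repeated factor of $\overline{f^n(x)-a}$ in $k_\gp[x]$ must come from a common root of $f^n(x)-a$ and $(f^n)'(x)$ modulo $\gp$, hence from a critical point of $f$ along the orbit, hence is detected by some root of $g$. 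One must also check that $N_\gp$ iterations suffice — in the non-exceptional and $f'\not\equiv 0$ cases, because a critical point first entering the $\gp$-adic picture at level $j$ affects $f^n(x)-a$ for $n\ge j+1$ but its contribution stabilizes, so checking up to $n=N$ is both necessary and enough; in the case $f'\equiv 0 \bmod \gp$, because $\overline{f(x)-a} = \overline{g(x)}^p$ is already a perfect $p$-th power, so the entire obstruction lives at level $n=1$ and all higher iterates are automatically $p$-th powers of something, making $N_\gp = 1$ correct.

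The third step is to treat the three cases (1)–(3) separately, matching the choice of $g$ to the structure of $\overline{f'(x)}$ in $k_\gp[x]$. In case (1), non-exceptionality means $\overline{f'(x)}$ is separable or $\operatorname{char} k_\gp > \deg f$, so $g = f'$ has the same roots mod $\gp$ as $\overline{f'}$ with no multiplicity collisions, and the correspondence is clean. In case (2), $f'\not\equiv 0$ but factors may collide or acquire multiplicity mod $\gp$, so one replaces $f'$ by the product $\prod\phi_i$ of lifts of the \emph{distinct} irreducible factors of $\overline{f'}$, which has exactly the critical points mod $\gp$ with multiplicity one — this is what lets $N_\gp = N$ still work. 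In case (3), $\overline{f'} \equiv 0$ forces $\operatorname{char} k_\gp = p \mid \deg f$ and $\overline{f(x)-a}$ to be a $p$-th power $\overline{g(x)}^p$; here every root of $\overline{f(x)-a}$ is ``critical'' so $g$ is essentially the radical of $\overline{f-a}$, and the single condition at $n=1$ decides everything since $f^n - a = (f\circ f^{n-1}) - a$ inherits the perfect-power structure.

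The main obstacle I anticipate is the second step: proving that the valuation conditions at roots of $g$ are not merely \emph{necessary} but \emph{sufficient} for $\gp$-maximality of all iterates simultaneously, i.e., that controlling finitely many iterates ($n\le N_\gp$) controls all of them, and that one does not need to iterate $g$ itself or enlarge it. This requires an inductive argument showing that once $f^j(x)-a$ is $\gp$-maximal and the critical-orbit valuations are bounded, the next iterate $f^{j+1}(x)-a$ introduces no new repeated factors beyond those already accounted for — a statement that hinges delicately on the chain rule and on the fact that $f$ has a unit leading coefficient (so that $\deg \overline{f^n} = (\deg f)^n$ and no degeneration at infinity occurs). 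Getting the bookkeeping of multiplicities right across the three cases, especially the exceptional ones where factors of $\overline{f'}$ can merge, is where the real work lies.
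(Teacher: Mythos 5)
Your plan matches the paper's proof: the paper derives the $\gp$-maximality criterion in terms of valuations of $f$ at critical lifts from Uchida's criterion (via Theorem~\ref{Thm: UchidaRestatement} and Lemmas~\ref{Lem: FactorstoCritPointsBetter}, \ref{Lem: VanishingPrimesandRoots}), then uses the chain rule $(f^n)'=\prod_j f'(f^j)$ exactly as you describe to propagate through iterates (Theorem~\ref{Thm: MainLift}) and handles the non-exceptional case via the global critical points (Theorem~\ref{Thm: MainNonExceptional}). The crux you flagged --- sufficiency of the valuation conditions for all iterates simultaneously --- is precisely where the paper's argument in Theorem~\ref{Thm: MainLift} does the work, pulling back any would-be repeated factor of $\overline{f^n(x)-a}$ along the critical orbit to contradict the hypothesis at a lower iterate.
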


Theorem \ref{Thm: MainRelativeOrbits} will be proved  via Theorems \ref{Thm: MainLift} and \ref{Thm: MainNonExceptional}.
Applications of Theorem \ref{Thm: MainRelativeOrbits} become particularly striking in cases where (for $f(x)$ and $a$ suitably chosen) it can be applied simultaneously for all $n\in \mathbb{N}$, since in this case one obtains the simultaneous monogenicity of the polynomials $f^n(x)-a$ 
for all $n\in \mathbb{N}$. Such a pair $(f(x),a)$ is called a \textit{dynamically monogenic pair} (cf. Definition \ref{Def: DynMono}).
Investigating such pairs connects the topic of monogenicity with the quickly expanding field of arithmetic dynamics, a central problem of which is the study of arithmetic and Galois-theoretic properties of the backwards orbits $\cup_{j\in \mathbb{N}} f^{-j}(\{a\})$ of a value $a$ under a polynomial $f(x)$.

Theorem \ref{Thm: MainRelativeOrbits} and the applications herein generalize and subsume several previous results on simultaneous monogenicity of $f^n(x)-a$ for all $n\in \mathbb{N}$ for various $f(x)$ and $a$. This work was inspired by an effort to generalize \cite{SmithDynRadical} and \cite{SmithWolske} (and hence generalize \cite{Castillo} and \cite{Ruofan}). Theorem \ref{Thm: MainRelativeOrbits} can be seen as a generalization of other recent work including \cite{SharmaSarmaLaishram} (which focuses on $f(x) = x^m-a$) and \cite{LJones2021} (which considers specific families such as $f(x)=(x-a)^m+a$). \cite{GassertRadical} made an earlier connection between monogenicity and dynamics by giving conditions to guarantee the monogenicity of $T_\ell^n(x)-a$ when $T_\ell(x)$ is the Chebyshev polynomial of prime degree $\ell$ and $T_\ell^n(x)-a$ is irreducible. By applying Theorem \ref{Thm: MainRelativeOrbits} and some additional work on irreducibility, we obtain Theorem \ref{thm:cheby} which gives necessary and sufficient conditions for the dynamical monogenicity of $\big(T_\ell(x),a\big)$ with no degree restriction or irreducibility assumption. From another point of view, results on the monogenicity of radical polynomials can be readily generalized by taking a dynamical perspective and considering backward orbits under the power map $f(x)=x^d$. Thus the main result of \cite{SmithRadical} (and hence of \cite{GassertPowerMap}) can be readily obtained from Theorem \ref{Thm: MainRelativeOrbits}.

The generality of Theorem \ref{Thm: MainRelativeOrbits} also demonstrates some of the difficulties in identifying dynamically monogenic pairs $(f(x),a)$. Firstly, controlling $\gp$-adic valuations of infinitely many values simultaneously, as often required in Conditions \ref{Main1b.} and \ref{Main1.}, is very difficult, and unconditional positive results should often be expected to be out of reach. However, it may happen that the set of these values is actually a finite set. This occurs precisely when the polynomial $f(x)$ is post-critically finite. We therefore dedicate special attention to such polynomials in our applications in Section \ref{sec:appl}, notably extending previous results on specific PCF polynomials such as $f(x)=x^d$ and $T_\ell(x)$, as mentioned above. 
Moreover, the application of Theorem \ref{Thm: MainRelativeOrbits} for all $n\in \mathbb{N}$ requires a simultaneous irreducibility assumption for all polynomials $f^n(x)-a$. In many cases, proving this property is considered a difficult problem in arithmetic dynamics, and many previous studies have 
resorted to accepting it as a black-box assumption.
A key observation of Sections \ref{Sec: DynIrred} and \ref{sec:appl} is that, in several important key cases, this property follows from the combination of Conditions \ref{Main1b.} and \ref{Main2.} (cf.\ Theorems \ref{thm:unicrit} and \ref{thm:cheby}) or can be guaranteed for ``many" values $a$ (cf. the example given by Theorem \ref{Thm:ManyCritsExample}). 


\section{Establishing the critical point criteria}
\label{sec:critpoint}

We begin with a criterion for the monogenicity of a polynomial that is due to Uchida \cite{Uchida}. See \cite{VidauxVidela} for a comparison to other criteria for monogenicity.

\begin{theorem}[Uchida's Criterion]
Let $R$ be a Dedekind ring. Let $\alpha$ be an element of some integral domain which contains $R$, and let $\alpha$ be integral over $R$. Then $R[\alpha]$ is a Dedekind ring if and only if the defining polynomial $f(x)$ of $\alpha$ is not contained in $\gm^2$ for any maximal ideal $\gm$ of the polynomial ring $R[x]$.
\end{theorem}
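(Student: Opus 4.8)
The plan is to translate the statement into commutative algebra on the regular ring $R[x]$. Since $R$ is Dedekind, hence integrally closed, and $\alpha$ is integral over $R$, its defining polynomial $f(x)$ is monic with coefficients in $R$, so $R[\alpha]\cong R[x]/(f)$ is a finitely generated free $R$-module; it is a Noetherian integral domain (it embeds in the ambient domain) and, being integral over $R$, has Krull dimension at most one. For such a ring one has the classical equivalence: it is a Dedekind ring if and only if it is regular, i.e.\ $R[\alpha]_{\gM}$ is a regular local ring (a field or a DVR) for every maximal ideal $\gM$. Since the maximal ideals $\gM$ of $R[\alpha]$ correspond to the maximal ideals $\gm$ of $R[x]$ containing $(f)$, the whole biconditional reduces to a local question at each such $\gm$. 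The case where $R$ itself is a field is immediate: then $R[\alpha]$ is a field, $(f)$ is the only maximal ideal of the PID $R[x]$ containing $f$, and $f\notin(f)^2$ by unique factorization; so assume henceforth that $R$ is not a field.

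Next I would analyze the geometry of $R[x]$, which is a regular ring of dimension two by the Hilbert basis theorem together with ``$R$ regular $\Rightarrow R[x]$ regular.'' Because $R[\alpha]=R[x]/(f)$ is a domain, $(f)$ is a prime ideal, of height one by Krull's principal ideal theorem, and it is not maximal since $R[\alpha]$ is not a field. Hence any maximal ideal $\gm\supseteq(f)$ has height two, so $\gm\cap R$ is a maximal ideal of $R$ and $A:=R[x]_{\gm}$ is a two-dimensional regular local ring with maximal ideal $\gn=\gm A$ and $0\neq f\in\gn$; in particular every maximal ideal of $R[\alpha]$ lies over a maximal ideal of $R$, so nothing is lost by localizing.

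The core input is then the standard dichotomy in the two-dimensional regular local ring $A$: the nonzero nonunit $f$ lies outside $\gn^2$ if and only if it is part of a regular system of parameters, which holds if and only if $A/(f)$ is a regular local ring --- necessarily a DVR, as it is one-dimensional. Thus $R[\alpha]_{\gM}=A/(f)$ is regular exactly when $f\notin\gn^2=\gm^2A$. Finally I would check that $f\notin\gm^2A$ is equivalent to $f\notin\gm^2$ in $R[x]$: the implication $f\in\gm^2\Rightarrow f\in\gm^2A$ is trivial, and conversely, since $\gm$ is maximal the module $R[x]/\gm^2$ is supported only at $\gm$, so $\gm^2$ is $\gm$-primary and hence $\gm^2=\gm^2A\cap R[x]$. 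Chaining the equivalences: $R[\alpha]$ is a Dedekind ring $\iff$ $R[\alpha]_{\gM}$ is a DVR for every maximal $\gM$ $\iff$ $f\notin\gm^2$ for every maximal ideal $\gm$ of $R[x]$ containing $(f)$ $\iff$ $f\notin\gm^2$ for every maximal ideal of $R[x]$, the last step holding because $f\notin\gm$ already yields $f\notin\gm^2$. No single step is deep; the part that needs the most care is the bookkeeping just described --- confirming that every maximal ideal of $R[\alpha]$ arises from a height-two maximal ideal of $R[x]$ and that the condition ``$f\notin\gm^2$'' passes correctly between $R[x]$ and its localization $A$ via the identity $\gm^2=\gm^2A\cap R[x]$ --- while the genuinely load-bearing fact is the regular-local-ring characterization that $f\notin\gn^2$ is equivalent to regularity of $A/(f)$.
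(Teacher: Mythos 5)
The paper does not reprove this result; it is imported verbatim from Uchida's article, so there is no in-paper argument to compare against. Evaluating your proof on its own merits: it is correct, and it is the natural scheme-theoretic route. The chain you set up is sound. A few points where the bookkeeping is doing real work and is handled properly: (i) $R[\alpha]\cong R[x]/(f)$ uses that $R$ is integrally closed (so the minimal polynomial over $\operatorname{Frac}(R)$ already lies in $R[x]$) together with monicity to run the division algorithm and identify the kernel as exactly $(f)$; (ii) the reduction of ``Dedekind'' to ``regular at every maximal ideal'' is valid because $R[\alpha]$ is a Noetherian domain of Krull dimension $\le 1$, with the zero-dimensional case (when $R$ is a field) dispatched separately; (iii) the claim that any maximal $\gm\supseteq(f)$ has height two and meets $R$ in a maximal ideal follows from $\dim R[x]=2$, Krull's principal ideal theorem applied to the nonzero prime $(f)$, and the fact that $(f)$ itself cannot be maximal once $R$ is not a field -- alternatively, integrality of $R[\alpha]/R$ already forces every maximal ideal of $R[\alpha]$ to contract to a maximal ideal of $R$; (iv) the equivalence $f\in\gm^2 \iff f\in\gm^2 R[x]_\gm$ is legitimate precisely because $\gm^2$ is $\gm$-primary ($\sqrt{\gm^2}=\gm$ is maximal), so it is the contraction of its localization. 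The load-bearing commutative-algebra input, that in a regular local ring $(A,\gn)$ an element $0\neq f\in\gn$ has $A/(f)$ regular if and only if $f\notin\gn^2$, is standard and correctly applied to conclude $A/(f)$ is a DVR exactly when $f\notin\gn^2$. In short: a clean, complete proof, phrased in the language of regular local rings and hypersurfaces in the arithmetic surface $\operatorname{Spec}R[x]$; since the paper cites Uchida as a black box, there is no divergence from the paper to report.
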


To get a feel for Uchida's result, take $R=\ZZ$. Here $\gm = \big(p,\phi(x)\big)$ for some integral prime $p$ and some $\phi(x)\in\ZZ[x]$ with $\ol{\phi(x)}\in\FF_p[x]$ irreducible. Thus $\gm^2=\big(p^2, p\phi(x),\phi(x)^2\big)$. We can rephrase Uchida's criterion to require that $f(x)\neq \phi(x)^2g(x)+p\phi(x)h(x)+p^2j(x)$ for any $p$ and $\phi(x)$ as above and any $g(x),h(x),j(x)\in\ZZ[x]$. 

\begin{remark}\label{Remark: pmaximalandlocal}
Uchida's criterion is generally amicable towards localization. Indeed, let $K$ be the field of fractions of a Dedekind domain $R$. We will be most interested in the case where $K$ is a number field and $R=\Ocal_K$, or $R$ is $\Ocal_K$ localized at a prime ideal. 
Let $\alpha$ be integral and algebraic over $R$. If $\gp$ is a prime ideal of $R$, write $R_\gp$ to indicate $R$ localized at $\gp$. (By \textit{localized}, we mean that we adjoin the inverse of every element not in $\gp$.) When $R$ is $\Ocal_K$, then we will simply write $\Ocal_\gp$. Let $S$ be the integral closure of $R[\alpha]$. We say that $R[\alpha]$ is \textit{$\gp$-maximal} if $R[\alpha]\tensor_{R} R_\gp=S\tensor_{R} R_\gp$. When $R=\ZZ$, it is equivalent to require that $\gp=p$ does not divide $\big[\Ocal_{\QQ(\alpha)}:\ZZ[\alpha]\big]$. 
Uchida \cite[Lemma]{Uchida} proves that if $\gm$ is a maximal ideal of $R[x]$ and $\gm$ contains an integral polynomial, then $\gm$ is of the form $\big(\gp,\phi(x)\big)$ for some maximal ideal $\gp$ of $R$ and some $\phi(x)\in R[x]$ that is irreducible modulo $\gp$.
Let $f(x)$ be the minimal polynomial of $\alpha$ over $R$, (or, equivalently, over $R_\gp$), 
and let $\pi_\gp$ be a generator of $\gp$ in $R_\gp$. 
Now, $R[\alpha]$ is $\gp$-maximal if and only if $f(x)$ is not contained in the square of any maximal ideal of $R_\gp[x]$. That is, if and only if $f(x)\notin \big(\pi_\gp, \phi(x)\big)^2$ for any lift $\phi(x)\in R_\gp[x]$ of an irreducible in $R/\gp[x]$. Expanding, we see that $f(x)$ is \textit{$\gp$-maximal} if and only if for each $\phi(x)$ as above $f(x)\neq \phi(x)^2g(x)+\pi_\gp\phi(x)h(x)+\pi_\gp^2j(x)$ for any $g(x), h(x), j(x)\in R_\gp[x]$.
\end{remark}

Let $k_\gp$ be the residue field of $R$ at $\gp$. Since $f(x)$ and $f'(x)$ share a factor $\phi(x)$ if and only $\phi(x)$ is a repeated factor of $f(x)$, we have the following restatement of Uchida's Criterion.

\begin{theorem}[Restatement of Uchida's Criterion]\label{Thm: UchidaRestatement}
Let $f(x)\in R[x]$ be an irreducible polynomial with leading coefficient a unit, and let $\alpha$ be a root. 

If $f'(x)\not\equiv 0\bmod \gp$, then the ring $R[\alpha]$ is $\gp$-maximal if and only if for every factor $\ol{\phi(x)}$ of $f'(x)$ in $k_\gp[x]$ and any lift $\phi(x)\in R_\gp[x]$ one has $\phi(x)\nmid f(x)$ in $R_\gp/\pi_\gp^2[x]$. 

If $f'(x)\equiv 0 \bmod \gp$, then $R[\alpha]$ is $\gp$-maximal if and only if for every factor $\ol{\phi(x)}$ of $f(x)$ in $k_\gp[x]$ and any lift $\phi(x)\in R_\gp[x]$ one has $\phi(x)\nmid f(x)$ in $R_\gp/\pi_\gp^2[x]$. \end{theorem}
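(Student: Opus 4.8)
The plan is to deduce the statement directly from the localized form of Uchida's criterion recorded in Remark~\ref{Remark: pmaximalandlocal}: the ring $R[\alpha]$ is $\gp$-maximal if and only if $f(x)\notin\big(\pi_\gp,\phi(x)\big)^2$ for every lift $\phi(x)\in R_\gp[x]$ of an irreducible polynomial in $k_\gp[x]$. Since the ideal $\big(\pi_\gp,\phi(x)\big)$ depends only on the reduction $\ol{\phi(x)}$, it is enough to fix one lift of each irreducible. The heart of the argument is to unwind membership in $\big(\pi_\gp,\phi\big)^2=\big(\pi_\gp^2,\ \pi_\gp\phi,\ \phi^2\big)$: for a lift $\phi$ of an irreducible $\ol\phi\in k_\gp[x]$ I claim
\[
f\in\big(\pi_\gp,\phi\big)^2 \iff \ \ol{\phi}^2\mid \ol{f}\ \text{ in }\ k_\gp[x]\ \ \text{ and }\ \ \phi\mid f\ \text{ in }\ R_\gp/\pi_\gp^2[x].
\]
The forward implication is immediate: reduce modulo $\pi_\gp$ to get $\ol\phi^2\mid\ol f$, and modulo $\pi_\gp^2$ to get $\phi\mid f$. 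For the converse, write $f=\phi q+\pi_\gp^2 r$ with $q,r\in R_\gp[x]$ using $\phi\mid f$ in $R_\gp/\pi_\gp^2[x]$; reducing mod $\pi_\gp$ gives $\ol f=\ol\phi\,\ol q$, so $\ol\phi^2\mid\ol f$ forces $\ol\phi\mid\ol q$ (as $\ol\phi$ is prime in the PID $k_\gp[x]$), hence $q=\phi g+\pi_\gp s$ and $f=\phi^2g+\pi_\gp\phi s+\pi_\gp^2 r\in\big(\pi_\gp,\phi\big)^2$. Only the fact that $R_\gp$ is a DVR is used here.

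It remains to translate the condition $\ol\phi^2\mid\ol f$. Reduction commutes with differentiation, so $\ol{f'}=(\ol f)'$, and by the fact quoted immediately before the theorem an irreducible $\ol\phi$ satisfies $\ol\phi^2\mid\ol f$ exactly when $\ol\phi$ divides both $\ol f$ and $\ol{f'}$. Inserting this into the displayed equivalence, $R[\alpha]$ is $\gp$-maximal if and only if for every irreducible $\ol\phi$ dividing $\gcd(\ol f,\ol{f'})$ and every lift $\phi$ one has $\phi\nmid f$ in $R_\gp/\pi_\gp^2[x]$. To obtain the stated phrasing, note that $\phi\mid f$ in $R_\gp/\pi_\gp^2[x]$ already forces $\ol\phi\mid\ol f$; hence enlarging the range of $\ol\phi$ to all irreducible factors of $\ol{f'}$ is harmless, and this gives the first case when $f'\not\equiv0\bmod\gp$. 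If instead $f'\equiv0\bmod\gp$, then $\ol{f'}=0$, every irreducible divides it, and ``$\ol\phi\mid\gcd(\ol f,\ol{f'})$'' becomes simply ``$\ol\phi\mid\ol f$'', which is the second case.

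The only genuine care I anticipate is in the converse half of the displayed equivalence, and in the characteristic-$p$ ingredient behind the quoted fact: over an imperfect residue field an irreducible polynomial can have vanishing derivative, so the equivalence ``$\ol\phi^2\mid\ol f$'' $\Leftrightarrow$ ``$\ol\phi\mid\gcd(\ol f,\ol{f'})$'' --- and with it the whole reformulation --- rests on separability of the factors involved, which holds automatically when $k_\gp$ is perfect, in particular when it is finite as in the number-field applications. Everything else is routine bookkeeping with the ideal $\big(\pi_\gp,\phi\big)^2$.
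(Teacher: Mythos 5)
Your proof is correct and takes the same route the paper implicitly takes: Uchida's criterion as localized in Remark~\ref{Remark: pmaximalandlocal}, combined with the elementary fact that an irreducible $\ol{\phi}$ is a repeated factor of $\ol{f}$ exactly when it divides $\gcd(\ol{f},\ol{f'})$. The paper itself offers no written proof (the theorem is introduced by a one-sentence observation), so the content here — the explicit unwinding of membership in $(\pi_\gp,\phi)^2$ as ``$\ol{\phi}^2\mid\ol{f}$ and $\phi\mid f$ mod $\pi_\gp^2$'', and the observation that enlarging the quantifier range from factors of $\gcd(\ol f,\ol{f'})$ to all factors of $\ol{f'}$ adds no constraints because $\phi\mid f$ mod $\pi_\gp^2$ already forces $\ol{\phi}\mid\ol{f}$ — is exactly what the paper leaves to the reader. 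Your caveat about separability is also correct and worth having on record: the equivalence ``$\ol{\phi}^2\mid\ol{f} \iff \ol{\phi}\mid\gcd(\ol{f},\ol{f'})$'' genuinely fails for inseparable irreducible $\ol{\phi}$ dividing $\ol{f}$ to multiplicity one, so the theorem as stated requires $k_\gp$ perfect (automatic in the number-field setting the paper works in, but not for an arbitrary Dedekind ring $R$).
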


\begin{remark}
    Letting $R=\ZZ$ for the sake of being explicit, note that we cannot simply require that $f(x)$ and $f'(x)$ do not share a factor in $\ZZ/p^2\ZZ[x]$. If $f(x) = \phi(x)^2g(x)+p\phi(x)h(x)+p^2j(x)$, then we compute that
    \[f'(x)=2\phi(x)\phi'(x)g(x)+\phi(x)^2g'(x)+p\phi'(x)h(x)+p\phi(x)h'(x)+p^2j'(x).\]
    We see that the term $p\phi'(x)h(x)$ may prevent $f'(x)$ and $f(x)$ from sharing $\phi(x)$ as a factor modulo $p^2$.
\end{remark}

Henceforth, we will focus on the case where $R$ is $\Ocal_K$ or $\Ocal_\gp$, the localization of $\Ocal_K$ at a prime ideal $\gp$. From Remark \ref{Remark: pmaximalandlocal}, we see that in investigating the $\gp$-maximality of $\Ocal_K[\alpha]$ it is equivalent to work in $\Ocal_\gp[\alpha]$. 
In what follows, our exposition centers on $\Ocal_K$, though the equivalent statements hold for $\Ocal_\gp$. We will use this in later sections when we consider the $\gp$-maximalility of polynomials that are $\Ocal_\gp$-integral but not $\Ocal_K$-integral. 

Note that $f'(x)\equiv 0 \bmod \gp$ if and only if $f(x)=g(x)^p \bmod \gp$, where $p$ is the characteristic of $k_\gp$ and $g(x)$ is a polynomial in $k_\gp[x]$. As we can see, every element of $\ol{k_\gp}$ is a critical point and every root of $f(x)$ is a multiple root. We call such a prime $\gp$ a \textit{vanishing prime} for $f(x)$. 

On the other hand, suppose $f'(x)\not\equiv 0\bmod \gp$. In this case, we say $\gp$ is a \textit{non-vanishing prime}. 
With Theorem \ref{Thm: UchidaRestatement}, the following lemma relates $\gp$-maximality to lifts of factors of $f'(x)$ modulo $\gp$, hence to lifts of the critical points of $f(x)$ in $\ol{k_\gp}$.
    
    \begin{lemma}[Factors, Critical Lifts, and Valuations]\label{Lem: FactorstoCritPointsBetter}
    Suppose $f'(x)\not\equiv 0\bmod \gp$, and write $f'(x)\equiv \prod_{i=1}^r \phi_i(x)^{e_i}\bmod \gp$ for a factorization of $f'(x)$ into irreducibles in $k_\gp[x]$ where we have chosen lifts $\phi_i(x)$ of each irreducible factor to $\Ocal_K[x]$. Let $J$ be an extension of $K$ where each $\phi_i(x)$ splits completely, and let $\gP$ be a prime of $\Ocal_J$ above $\gp$. Then both of the following hold:
    \begin{enumerate}[label=\alph*), ref=\alph*)]
    \item If $\theta\in J$ fulfills $v_{\gP}(\phi(\theta))\ge e(\gP\mid \gp)$ \footnote{For example, but not necessarily, $\phi(\theta)=0$.} for a lift $\phi(x)$ of some irreducible factor of $f'(x)$ in $k_{\gp}[x]$ and  $v_\gP\big(f(\theta)\big)>e(\gP\mid \gp)$, then $\phi(x)$ divides $f(x)$ in $\Ocal_K/\gp^2[x]$.\label{Lemlifta}
    \item If conversely a lift $\phi(x)$ of an irreducible factor of $f'(x)$ in $k_\gp[x]$ divides $f(x)$ in $\Ocal_K/\gp^2[x]$, then $v_\gP\big(f(\theta)\big)>e(\gP\mid \gp)$ for every $\theta\in J$ such that $v_\gP\big(\phi(\theta)\big)\geq e(\gP\mid \gp)$. \label{Lemliftb}
    \end{enumerate}
\end{lemma}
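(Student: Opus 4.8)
The plan is to work locally at $\gp$ and translate divisibility statements into statements about the valuation $v_\gP$. Replace $\Ocal_K$ by the DVR $A=\Ocal_{\gp}$ with uniformizer $\pi$ and (finite, hence perfect) residue field $k_\gp$; since $\gp$-maximality and membership in $\gp^2$ are insensitive to this localization, the task in part \ref{Lemlifta} becomes: show that the remainder $r(x)$ of the division $f(x)=\phi(x)q(x)+r(x)$ in $A[x]$ (possible since we may take $\phi$ monic) lies in $\pi^2 A[x]$; in part \ref{Lemliftb} this divisibility is the hypothesis. Write $e=e(\gP\mid\gp)$, so that $v_\gP(\pi)=e$, and recall that $\bar\phi:=\phi\bmod\gp$ is an irreducible factor of $\bar{f'}$ in $k_\gp[x]$; being irreducible over a finite field it is separable, so $\gcd(\bar\phi,\bar{\phi'})=1$ in $k_\gp[x]$.

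Before treating either part I would record two facts. First, since $\phi$ splits in $J$, write $\phi(x)=\prod_{j=1}^d(x-\alpha_j)$ with $\alpha_j\in\Ocal_J$ and $d=\deg\phi$; the $\bar\alpha_j\in k_\gP$ are the distinct roots of $\bar\phi$, so if $\theta\in J$ satisfies $v_\gP(\phi(\theta))\ge 1$ then $\theta$ is $\gP$-integral and $\bar\theta=\bar\alpha_{j_0}$ for a unique $j_0$, with $v_\gP(\theta-\alpha_j)=0$ for $j\ne j_0$; consequently $v_\gP(\phi(\theta))=v_\gP(\theta-\alpha_{j_0})$, so the hypothesis $v_\gP(\phi(\theta))\ge e$ forces $v_\gP(\theta-\alpha_{j_0})\ge e$. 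Second, whenever $f=\phi q+\pi^j s$ in $A[x]$ with $j\ge 1$, differentiating and reducing mod $\pi$ gives $\bar{f'}=\bar{\phi'}\bar q+\bar\phi\,\bar{q'}$; since $\bar\phi\mid\bar{f'}$ and $\gcd(\bar\phi,\bar{\phi'})=1$, we deduce $\bar\phi\mid\bar q$, i.e.\ $q=\phi q_1+\pi q_2$ for some $q_1,q_2\in A[x]$.

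Part \ref{Lemliftb} is then quick: from $f=\phi q+\pi^2 s$ and $\bar\phi\mid\bar q$ one gets $v_\gP(q(\theta))\ge e$ for any $\theta$ with $v_\gP(\phi(\theta))\ge e$, hence $v_\gP(\phi(\theta)q(\theta))\ge 2e$ and $v_\gP(f(\theta))\ge\min(2e,2e)=2e>e$, using $e\ge 1$. For part \ref{Lemlifta}, given a single $\theta$ with $v_\gP(\phi(\theta))\ge e$ and $v_\gP(f(\theta))>e$, I would prove $r\in\pi^2A[x]$ in two nested steps, each combining a valuation estimate at $\theta$, the transfer $v_\gP\big(h(\theta)-h(\alpha_{j_0})\big)\ge v_\gP(\theta-\alpha_{j_0})\ge e$ valid for $h\in A[x]$, and the fact that $\bar\phi$ is the minimal polynomial of $\bar\alpha_{j_0}$ over $k_\gp$ while $\deg\bar r<d$: in step one, $v_\gP(r(\theta))\ge 1$ yields $v_\gP(r(\alpha_{j_0}))\ge 1$, so $\bar r(\bar\alpha_{j_0})=0$, so $\bar\phi\mid\bar r$, so $\bar r=0$ and $r=\pi\rho$ with $\deg\rho<d$; in step two, applying the second recorded fact to $f=\phi q+\pi\rho$ to write $q=\phi q_1+\pi q_2$, the bound $v_\gP(\phi(\theta))\ge e$ now gives $v_\gP(q(\theta))\ge e$, hence $v_\gP(\phi(\theta)q(\theta))\ge 2e\ge e+1$, hence $v_\gP(\rho(\theta))\ge 1$, and the transfer-plus-minimal-polynomial argument again forces $\bar\rho=0$, so $r=\pi^2\rho_1$. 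I expect step two of part \ref{Lemlifta} to be the main obstacle: this is precisely where one must use the full strength of $v_\gP(\phi(\theta))\ge e$ (and not merely $\ge 1$) together with the input $\bar\phi\mid\bar{f'}$, and where one has to track the strict inequality carefully so that $v_\gP(\phi(\theta)q(\theta))$ genuinely exceeds $e$.
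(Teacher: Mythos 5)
Your proof is correct and reaches the same conclusion, but the mechanics are genuinely different from the paper's. The paper first deduces from the hypotheses that $\bar\phi^2\mid\bar f$ in $k_\gp[x]$ (using $\bar\phi\mid\bar f$, $\bar\phi\mid\bar{f'}$, and separability of $\bar\phi$), writes $f=\phi^2 g+bh$ with $b\in\gp$, evaluates at $\theta$ to get $v_\gP(bh(\theta))>e$, and then splits into the exhaustive cases $b\in\gp^2$ (done) or $v_\gp(b)=1$ and $v_\gP(h(\theta))>0$, the latter forcing $\bar\phi\mid\bar h$ by a second gcd argument; part (b) is a one-line evaluation. You instead localize to $A=\Ocal_\gp$, perform Euclidean division $f=\phi q+r$ with $\deg r<\deg\phi$, and run a two-stage ``peeling'' argument: in each stage you bound $v_\gP$ of the current remainder at $\theta$, transfer that bound to a genuine root $\alpha_{j_0}$ of $\phi$ via $v_\gP(h(\theta)-h(\alpha_{j_0}))\ge v_\gP(\theta-\alpha_{j_0})\ge e$, and conclude the remainder vanishes mod $\pi$ by the degree bound (since $\bar\phi$ is the minimal polynomial of $\bar\alpha_{j_0}$). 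The derivative fact $\bar\phi\mid\bar q$ (your ``second recorded fact'') plays the role the paper's $\bar\phi^2\mid\bar f$ step plays. Your route buys something the paper leaves implicit: the explicit use of the splitting field $J$ and the reduction from an arbitrary $\theta$ with $v_\gP(\phi(\theta))\ge e$ to an honest root $\alpha_{j_0}$, which clarifies exactly why the hypothesis ``$v_\gP(\phi(\theta))\ge e(\gP\mid\gp)$'' rather than ``$\phi(\theta)=0$'' is the right one. The paper's route is shorter because it avoids the division-with-remainder bookkeeping and the iteration, bundling everything into the ring-level identity $f=\phi^2 g+bh$. Both correctly use $e\ge 1$ at the crucial spot (your $2e\ge e+1$; the paper's $v_\gP(\phi(\theta)^2g(\theta))\ge 2e$ against $v_\gP(f(\theta))>e$).
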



 \begin{proof}
    Let $\theta\in J$ be such that $v_{\gP}\big(\phi(\theta)\big)\geq e(\gP\mid \gp)$ and $v_\gP\big(f(\theta)\big)>e(\gP\mid \gp)$. We see that $\phi(\theta), f(\theta)\equiv 0\bmod \gP$. 
        Since $k_\gP=\Ocal_J/\gP$ is an extension of $k_\gp$, we see that $\gcd\big(f(x),\phi(x)\big)\neq 1$ in $k_\gp[x]$. Since $\phi(x)$ is irreducible in $k_{\gp}[x]$, we note $\phi(x)$ divides $f(x)$ in $k_\gp[x]$. However, since $\phi(x)$ also divides $f'(x)$ in $k_\gp[x]$, we see that $\phi(x)^2$ divides $f(x)$ in $k_\gp[x]$. Thus we can write $f(x)=\phi(x)^2g(x)+bh(x)$ in $\Ocal_K[x]$ for some $b\in \gp$ and some $h(x)\in \Ocal_K[x]$. We evaluate at $\theta$ to obtain $f(\theta)=\phi(\theta)^2 g(\theta)+ bh(\theta)$. Noting $v_\gP\big(\phi(\theta)^2 g(\theta)\big)\geq 2 e(\gP\mid \gp)$, we find $v_\gP\big(bh(\theta)\big)>e(\gP\mid \gp)$. Thus either $b\in \gp^2$ or $v_\gP\big(h(\theta)\big)>0$. In the former case, our desired result is clear. 
    In the latter case, we note $\gcd\big(h(x),\phi(x)\big)\neq 1$ in $k_\gP[x]$ and hence in $k_\gp[x]$ as above. Thus, $h(x)=\phi(x)j(x)+c\ell(x)$ with $c\in \gp\Ocal_K$. Therefore,
    \[f(x)=\phi(x)^2g(x)+bh(x)=\phi(x)^2g(x)+b\big(\phi(x)j(x)+c\ell(x)\big),\]
    and we have \ref{Lemlifta}. 
    
    Suppose conversely that $\phi(x)$ divides $f(x)$ in $\Ocal_K/\gp^2[x]$. Since $\phi(x)$ is a factor of $f'(x)$ modulo $\gp$, we have $f(x)=\phi(x)^2g(x)+b\phi(x)h(x)+cj(x)$ where $b\in \gp$ and $c\in \gp^2$. Evaluating at any $\theta$ such that $v_\gP\big(\phi(\theta)\big)\geq e(\gP\mid \gp)$, we see that $v_\gP\big(f(\theta)\big)\geq 2e(\gP\mid \gp)$, yielding \ref{Lemliftb}.
    \end{proof}

When $\gp$ is a vanishing prime, we have $f(x)\equiv g(x)^p\bmod \gp$ for some $g(x)\in\Ocal_K[x]$. The next lemma connects roots of $g(x)$ to factors of $f(x)$.

\begin{lemma}[Vanishing Primes and Roots]\label{Lem: VanishingPrimesandRoots}
   Let $\gp$ be a vanishing prime of residue characteristic $p$, so $f(x)\equiv g(x)^p\bmod \gp$ for some $g(x)\in\Ocal_K[x]$. Let $L$ be an extension of $K$ where $g(x)$ splits, and let $\gP$ be a prime of $L$ above $\gp$. 
   Then there is a lift $\phi(x)$ of a repeated irreducible factor of $f(x)$ in $k_\gp[x]$ that divides $f(x)$ in $\Ocal_K/\gp^2[x]$ if and only if $v_\gP\big(f(\tau)\big)>e(\gP\mid \gp)$, where $\tau\in\Ocal_L$ is a root of $g(x)$. 
\end{lemma}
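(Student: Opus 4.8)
The plan is to reduce everything to a computation over the discrete valuation ring $\Ocal_\gp$ and to read off the congruence condition from the factorization of $f(x)$ modulo $\gp^2$, in the same spirit as the proof of Lemma~\ref{Lem: FactorstoCritPointsBetter}. First I would invoke Remark~\ref{Remark: pmaximalandlocal} to replace $\Ocal_K$ by $R=\Ocal_\gp$ with a uniformizer $\pi=\pi_\gp$, and take $g(x)\in\Ocal_\gp[x]$ with unit leading coefficient and $\deg g=\deg f/p$; this is legitimate because $f'(x)\equiv 0\bmod\gp$ together with $f$ having unit leading coefficient forces $p\mid\deg f$, and $k_\gp$ is perfect, so $\ol f$ is uniquely a $p$-th power $\ol g^{\,p}$. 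Then the roots of $g$ lie in the relevant localization of $\Ocal_L$ and reduce modulo $\gP$ to roots of $\ol g$. Writing $f(x)=g(x)^p+\pi\,h(x)$ with $h\in\Ocal_\gp[x]$, for a root $\tau$ of $g$ I obtain $f(\tau)=\pi\,h(\tau)$; hence $v_\gP(f(\tau))>e(\gP\mid\gp)$ holds exactly when $v_\gP(h(\tau))>0$, i.e.\ when $\ol h(\ol\tau)=0$, i.e.\ when the minimal polynomial $\ol\phi$ of $\ol\tau$ over $k_\gp$ divides $\ol h$ in $k_\gp[x]$.

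The core step I would isolate as a sub-claim: for $\ol\phi$ the minimal polynomial over $k_\gp$ of the reduction $\ol\tau$ of a root $\tau$ of $g$, and any lift $\phi\in\Ocal_\gp[x]$ of $\ol\phi$, one has $\phi\mid f$ in $\Ocal_\gp/\pi^2[x]$ if and only if $\ol\phi\mid\ol h$. To prove it I note that $\ol\phi\mid\ol g$ (since $\ol\phi$ is irreducible and divides $\ol f=\ol g^{\,p}$), so $\ol\phi^{\,p}\mid\ol f$ and in particular $\ol\phi$ is a repeated irreducible factor of $\ol f$. Writing $g=\phi u+\pi w$ with $u,w\in\Ocal_\gp[x]$ and expanding $g^p$ by the binomial theorem, every cross term carries a factor $\binom{p}{k}\pi^{k}$ with $k\ge 1$; the only term that is not automatically in $\pi^2\Ocal_\gp[x]$ is the $k=1$ term $p\,\pi\,\phi^{p-1}u^{p-1}w$, and it too lies in $\pi^2\Ocal_\gp[x]$ because $p\in\gp$ (the residue characteristic is $p$). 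Hence $g^p\equiv\phi^{p}u^{p}\pmod{\pi^2}$, and since $p\ge 2$ this is divisible by $\phi^2$. Therefore $f=g^p+\pi h\equiv\phi\cdot(\phi^{p-1}u^{p})+\pi h\pmod{\pi^2}$, so $\phi\mid f$ modulo $\pi^2$ iff $\phi\mid\pi h$ modulo $\pi^2$; taking $\phi$ monic (harmless, as divisibility mod $\pi^2$ is unchanged by a unit multiple) and using that $\phi$ is then prime in the UFD $\Ocal_\gp[x]$ with $\phi\nmid\pi$, this is in turn equivalent to $\ol\phi\mid\ol h$. I would also record, for later use, that these congruences show that the truth of ``$\phi\mid f$ mod $\pi^2$'' does not depend on the chosen lift of $\ol\phi$.

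Finally I would assemble the two directions. For ``$\Leftarrow$'': if some root $\tau$ of $g$ has $v_\gP(f(\tau))>e(\gP\mid\gp)$, the first paragraph gives $\ol\phi\mid\ol h$ for $\ol\phi$ the minimal polynomial of $\ol\tau$, which is a repeated irreducible factor of $\ol f$, and the sub-claim makes every lift of $\ol\phi$ divide $f$ in $\Ocal_\gp/\pi^2[x]$, equivalently (Remark~\ref{Remark: pmaximalandlocal}) in $\Ocal_K/\gp^2[x]$. For ``$\Rightarrow$'': if a lift $\phi$ of a repeated irreducible factor $\ol\phi$ of $\ol f$ divides $f$ in $\Ocal_K/\gp^2[x]$, then $\ol\phi\mid\ol f=\ol g^{\,p}$ gives $\ol\phi\mid\ol g$; since $g$ splits over $L$, $\ol g$ and hence $\ol\phi$ splits over $k_\gP$, so I may choose a root $\tau\in\Ocal_L$ of $g$ with $\ol\tau$ a root of $\ol\phi$ (so $\ol\phi$ is the minimal polynomial of $\ol\tau$), apply the sub-claim to get $\ol\phi\mid\ol h$, and conclude $v_\gP(f(\tau))>e(\gP\mid\gp)$ from the first paragraph. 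I expect the main obstacle to be precisely the characteristic-$p$ lifting step: noticing that the ``freshman's dream'' $g^p\equiv\phi^p u^p$ persists modulo $\pi^2$ and not merely modulo $\pi$, which is exactly where $p\in\gp$ is used to kill the linear cross term; after that the argument is routine valuation bookkeeping together with standard facts about lifting factorizations modulo $\gp$.
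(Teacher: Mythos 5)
Your proof is correct and follows essentially the same strategy as the paper: localize at $\gp$, write $f = g^p + \pi h$, and translate both the divisibility $\phi\mid f\bmod\gp^2$ and the valuation condition $v_\gP(f(\tau))>e(\gP\mid\gp)$ into the single statement $\ol\phi\mid\ol h$. Your packaging via the isolated sub-claim and the freshman's-dream computation $g^p\equiv\phi^p u^p\bmod\pi^2$ is a cleaner organization of the same valuation bookkeeping that the paper carries out by writing $g\equiv\phi^m r\bmod\gp$ and splitting into cases on whether $b\in\gp^2$.
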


 \begin{proof}
    Since $\phi(x)$ is irreducible in $k_\gp[x]$ and since $f(x)\equiv g(x)^p\bmod \gp$, we see that $\phi(x)$ divides $g(x)$ in $k_\gp[x]$. Thus, we can write $g(x)\equiv \phi(x)^mr(x)\bmod \gp$ with $\gcd\big(\phi(x),r(x)\big)=1$ in $k_\gp[x]$. Since $g(x)$ splits in $L$, we see there is an algebraic integer $\tau$ that is a root of $g(x)$ and such that $\phi(\tau)\equiv 0\bmod\gP$. We have $0=g(\tau)\equiv \phi(\tau)^m r(\tau)\bmod \gp\Ocal_L$. Since $\gcd\big(\phi(x),r(x)\big)=1$ and since $v_\gP\big(\phi(\tau)\big)>0$, we see $v_\gP\big(r(\tau)\big)=0$ and $v_\gP\big(\phi(\tau)^m\big)\geq e(\gP\mid \gp)$. 

    Since $\phi(x)$ divides $f(x)$ modulo $\gp^2$ and since $f(x)\equiv g(x)^p\bmod \gp$, we have $f(x)=\big(\phi(x)^mr(x)\big)^p+bh(x)$ with $b\in\gp$ and either $b\in\gp^2$ or $\phi(x)$ dividing $h(x)$ modulo $\gp$. Evaluating at $\tau$ and noting $v_\gP\big(\phi(\tau)^m\big)\geq e(\gP\mid \gp)$, 
    we have our desired result.

    For the converse, let $\tau\in L$ be a root of $g(x)$ such that $v_\gP\big(f(\tau)\big)>e(\gP\mid \gp)$. 
    Let $\phi(x)$ be an irreducible factor of $g(x)$ in $\Ocal_K[x]$ such that $\phi(\tau)\equiv 0\bmod \gP$. (Note that we can assume that the number of irreducible factors of $g(x)$ in $k_\gp[x]$ is equal to that in $\Ocal_K[x]$.)
   
    By hypothesis, $f(x)=g(x)^p+bh(x)$ with $b\in\gp$ and $h(x)\in\Ocal_K[x]$. Evaluating at $\tau$, we see that $v_\gP\big(bh(\tau)\big)>e(\gP\mid \gp)$. Thus either $b\in\gp^2$ or $\gcd\big(\phi(x),h(x)\big)>1$ in $k_\gp[x]$. In either case we have our result.
    \end{proof}

The lemmas above yield a rephrasing of Uchida's criterion in terms of lifts of factors of $f'(x)\bmod \gp$.

\begin{theorem}[Uchida's Critical Lift Criterion]\label{Thm: RelCritPointCrit}
Let $f(x)\in\Ocal_K[x]$ be an irreducible polynomial with leading coefficient a unit, and let $\alpha$ be a root. 
For $\gp\subset \Ocal_K$ a non-vanishing prime ($f'(x)\not\equiv 0\bmod \gp$), write $f'(x)\equiv \prod_{i=1}^r \phi_i(x)^{e_i}\bmod \gp$ for a factorization of $f'(x)$ into irreducibles in $k_\gp[x]$ where we have chosen lifts $\phi_i(x)$ of each irreducible factor to $\Ocal_K[x]$. Let $J$ be an extension of $K$ where each $\phi_i(x)$ splits completely, and let $\gP$ be a prime of $\Ocal_J$ above $\gp$.
Then $\Ocal_K[\alpha]$ is $\gp$-maximal if and only if for each $i$ one has $v_\gP\big(f(\theta_i)\big)\leq e(\gP\mid \gp)$ where $\theta_i$ is an arbitrary element of $J$ such that $v_\gP\big(\phi_i(\theta_i)\big)\geq e(\gP\mid \gp)$.

If $\gp$ is a vanishing prime, then we can write $f(x)\equiv g(x)^p\bmod \gp$ for some $g(x)\in \Ocal_K[x]$. Let $L$ be an extension of $K$ where $g(x)$ splits, and let $\gP$ be a prime of $\Ocal_L$ above $\gp$. 
In this case, the ring $\Ocal_K[\alpha]$ is $\gp$-maximal if and only if for each $\tau\in \Ocal_L$ such that $g(\tau)=0$, one has $v_\gP\big(f(\tau)\big)\leq e(\gP\mid \gp)$. 
\end{theorem}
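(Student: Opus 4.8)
The plan is to derive both equivalences directly from the restatement of Uchida's criterion, Theorem~\ref{Thm: UchidaRestatement}, using Lemmas~\ref{Lem: FactorstoCritPointsBetter} and~\ref{Lem: VanishingPrimesandRoots} as the bridges between the ``divides $f(x)$ in $\Ocal_K/\gp^2[x]$'' conditions appearing there and the valuation conditions $v_\gP(f(\theta))\le e(\gP\mid\gp)$ appearing here. By Remark~\ref{Remark: pmaximalandlocal}, $\gp$-maximality of $\Ocal_K[\alpha]$ is detected after localizing at $\gp$, and the two lemmas are already phrased in terms of divisibility modulo $\gp^2$ over $\Ocal_K[x]$, so no further bookkeeping about $\Ocal_K$ versus $\Ocal_\gp$ is needed. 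I would treat the non-vanishing and vanishing cases separately, mirroring the dichotomy of Theorem~\ref{Thm: UchidaRestatement}.

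\textbf{Non-vanishing case.}
Theorem~\ref{Thm: UchidaRestatement} says $\Ocal_K[\alpha]$ is $\gp$-maximal if and only if no lift $\phi(x)$ of any irreducible factor of $f'(x)$ in $k_\gp[x]$ divides $f(x)$ modulo $\gp^2$; equivalently, it fails to be $\gp$-maximal exactly when such a ``bad'' pair $(\phi(x),\bar\phi_i(x))$ exists. I claim this happens precisely when there are some $i$ and some $\theta_i\in J$ with $v_\gP(\phi_i(\theta_i))\ge e(\gP\mid\gp)$ and $v_\gP(f(\theta_i))>e(\gP\mid\gp)$, which is exactly the negation of the stated criterion. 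For one implication, if such a $\theta_i$ exists then Lemma~\ref{Lem: FactorstoCritPointsBetter}\ref{Lemlifta} gives $\phi_i(x)\mid f(x)$ in $\Ocal_K/\gp^2[x]$, a bad pair. For the converse, suppose some lift $\phi(x)$ of some $\bar\phi_i(x)$ divides $f(x)$ modulo $\gp^2$, and let $\beta_i\in\Ocal_J$ be a root of the \emph{chosen} lift $\phi_i(x)$, which exists since $\phi_i(x)$ splits completely in $J$. Because $\phi(x)\equiv\phi_i(x)\bmod\gp$ and $\beta_i$ is $\gp$-integral, $\phi(\beta_i)=(\phi-\phi_i)(\beta_i)\in\gp\Ocal_J$, so $v_\gP(\phi(\beta_i))\ge e(\gP\mid\gp)$; then Lemma~\ref{Lem: FactorstoCritPointsBetter}\ref{Lemliftb}, applied to the lift $\phi(x)$ at the point $\beta_i$, yields $v_\gP(f(\beta_i))>e(\gP\mid\gp)$, and since $v_\gP(\phi_i(\beta_i))=\infty\ge e(\gP\mid\gp)$ the point $\beta_i$ is an admissible $\theta_i$. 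Negating both sides of the resulting equivalence gives the first assertion of the theorem.

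\textbf{Vanishing case.}
Write $f(x)\equiv g(x)^p\bmod\gp$. Since $\bar f(x)=\bar g(x)^p$ in $k_\gp[x]$, every irreducible factor of $f(x)$ in $k_\gp[x]$ is automatically a repeated factor, so Theorem~\ref{Thm: UchidaRestatement} says $\Ocal_K[\alpha]$ is $\gp$-maximal if and only if no lift of a repeated irreducible factor of $f(x)$ divides $f(x)$ modulo $\gp^2$. Lemma~\ref{Lem: VanishingPrimesandRoots} is exactly the statement that such a lift exists if and only if $v_\gP(f(\tau))>e(\gP\mid\gp)$ for some root $\tau\in\Ocal_L$ of $g(x)$. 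Negating, $\Ocal_K[\alpha]$ is $\gp$-maximal if and only if $v_\gP(f(\tau))\le e(\gP\mid\gp)$ for every root $\tau$ of $g(x)$ in $\Ocal_L$, which is the second assertion.

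\textbf{Main obstacle.}
The only genuinely delicate point, and the one I would write out with care, is the mismatch between the ``for every lift'' quantifier in Theorem~\ref{Thm: UchidaRestatement} and the fixed lifts $\phi_i(x)$ appearing in the theorem statement: this is what forces the observation above that a root $\beta_i$ of $\phi_i(x)$ is $\gP$-adically close enough to a root of any competing lift $\phi(x)$ of $\bar\phi_i(x)$ that Lemma~\ref{Lem: FactorstoCritPointsBetter}\ref{Lemliftb} still applies to $\phi(x)$ at $\beta_i$. (One could instead first check that whether a lift of $\bar\phi_i(x)$ divides $f(x)$ modulo $\gp^2$ is independent of the chosen lift, using that $\bar\phi_i(x)^2\mid\bar f(x)$ whenever there is a bad pair, but the argument above sidesteps this.) Everything else is a mechanical assembly of Theorem~\ref{Thm: UchidaRestatement} with the two lemmas.
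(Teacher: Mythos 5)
Your proposal is correct and takes essentially the same approach as the paper, whose proof is the one-line remark that the result follows by combining Theorem~\ref{Thm: UchidaRestatement} with Lemmas~\ref{Lem: FactorstoCritPointsBetter} and~\ref{Lem: VanishingPrimesandRoots}. You spell out exactly that combination, and your explicit handling of the ``arbitrary lift vs.\ chosen lift'' quantifier mismatch (via the observation that $\phi(\beta_i)\in\gp\Ocal_J$ for any other lift $\phi$ of $\bar\phi_i$) is a legitimate detail that the paper's terse proof leaves to the reader.
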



\begin{proof}
    Combining Lemmas \ref{Lem: FactorstoCritPointsBetter} and \ref{Lem: VanishingPrimesandRoots} with Theorem \ref{Thm: UchidaRestatement}, we have the result.
\end{proof}


We now proceed to considering monogenicity of not just one fixed polynomial $f(x)$ but of all iterates $f^n(x)-a$, $n\in \mathbb{N}$ simultaneously (for a fixed $a\in \Ocal_K$). The following is the first version of our main criterion.

\begin{theorem}[The Critical Lift Criteria for Monogenicity]\label{Thm: MainLift}
Let $K$ be a number field. Fix $N>0$. Suppose $f(x)\in \Ocal_K[x]$ has a leading coefficient that is a unit, and let $a\in\Ocal_K$ be such that $f^n(x)-a$ is irreducible over $K$ for $1\leq n\leq N$. 

\begin{enumerate}[label=\arabic*., ref=\arabic*]
\item If $\gp$ is not a vanishing prime (i.e., $f'(x)\not\equiv 0\bmod \gp$), then we may write then we may write $f'(x) \equiv \prod_{i=1}^r\phi_i(x)^{e_r}\bmod \gp$ where each $\phi_i(x)$ is a lift of a distinct irreducible factor of $f'(x)$ in $k_\gp[x]$ to $\Ocal_K[x]$. 
Let $J$ be an extension of $K$ where each $\phi_i(x)$ splits and let $\gP$ be a prime of $J$ above $\gp$. Then the polynomial $f^n(x)-a$ is $\gp$-maximal for each $1\leq n\leq N$ if and only if for each $\theta$ in $J$ that is a root 
of some $\phi_i(x)$ one has $v_\gP\big(f^n(\theta)-a\big)\leq e(\gP\mid \gp)$ for each $1\leq n\leq N$.
\label{MainLiftNonVan}

\item If $\gp$ is a vanishing prime (i.e., $f'(x)\equiv 0\bmod \gp$), then we have $f(x)-a\equiv g(x)^p\bmod \gp$ for some $g(x)\in \Ocal_K[x]$. 
Let $L$ be an extension of $K$ where $g(x)$ splits. Let $\gP$ be a prime of $\Ocal_L$ above $\gp$ with ramification index $e(\gP\mid \gp)$. The polynomial $f^n(x)-a$ is $\gp$-maximal for all $1\leq n \leq N$ if and only if for every root $\tau$ of $g(x)$ in $\Ocal_L$, one has $v_\gP\big(f(\tau)-a\big)\leq e(\gP\mid \gp)$.\label{MainLiftVan}
\end{enumerate}

\end{theorem}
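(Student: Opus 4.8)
The plan is to reduce the statement about all iterates $f^n(x) - a$ to the single-polynomial criterion of Theorem~\ref{Thm: RelCritPointCrit}, applied to each of the polynomials $f_n(x) := f^n(x) - a$ in turn, and then to identify the critical points of $f_n$ with the forward orbit of the critical points of $f$ under $f$. The key observation is the chain rule: $(f^n)'(x) = \prod_{j=0}^{n-1} f'\big(f^j(x)\big)$, so $(f_n)'(x) = (f^n)'(x)$ has the same set of roots as $\bigcup_{j=0}^{n-1} (f^j)^{-1}\big(\{\text{crit. pts of } f\}\big)$. Working modulo $\gp$ in the non-vanishing case, the distinct irreducible factors of $(f^n)'(x)$ in $k_\gp[x]$ are precisely the distinct irreducible factors of the polynomials $f'\big(f^j(x)\big)$ for $0 \le j \le n-1$; over the splitting field $J$, their roots are exactly the elements $\theta' \in J$ with $f^j(\theta') = \theta$ for some root $\theta$ of some $\phi_i$ and some $j < n$.

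First I would handle the non-vanishing case~\ref{MainLiftNonVan}. Fix $n$ with $1 \le n \le N$. By hypothesis $f_n = f^n - a$ is irreducible with unit leading coefficient, and since $f'(x) \not\equiv 0 \bmod \gp$ one checks $(f^n)'(x) \not\equiv 0 \bmod \gp$ as well (a product of nonzero polynomials over the domain $k_\gp[x]$, using that the leading coefficient of $f$ is a unit so that $f^j(x)$ has the expected degree mod $\gp$), so $\gp$ is non-vanishing for $f_n$ too. Apply Theorem~\ref{Thm: RelCritPointCrit} to $f_n$: $\Ocal_K[\alpha_n]$ is $\gp$-maximal (with $\alpha_n$ a root of $f_n$) if and only if for every root $\theta'$ in a splitting field of $(f^n)'$ of an irreducible factor of $(f^n)' \bmod \gp$, one has $v_{\gP'}\big(f_n(\theta')\big) \le e(\gP' \mid \gp)$. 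Now I use the factorization $(f^n)'(x) = \prod_{j=0}^{n-1} f'\big(f^j(x)\big) \bmod \gp$: a root $\theta'$ of some irreducible factor of $(f^n)'$ is a root of some $f'\big(f^j(x)\big) \bmod \gp$, i.e. $f^j(\theta')$ reduces to a root of $f' \bmod \gP'$, i.e. $v_{\gP'}\big(\phi_i(f^j(\theta'))\big) \ge e(\gP' \mid \gp)$ for some $i$. Setting $\theta := f^j(\theta')$, the condition $v_{\gP'}(f_n(\theta')) \le e$ becomes $v_{\gP'}\big(f^{n-j}(\theta) - a\big) \le e$; as $\theta$ ranges over roots of the $\phi_i$ and $j$ ranges over $0, \ldots, n-1$, the exponent $m := n - j$ ranges over $1, \ldots, n$. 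Conversely every such pair $(\theta, m)$ arises from some preimage $\theta'$ (here I need that the $\phi_i$ split in $J$ and that preimages under $f^j$ exist in a suitable extension; I should be careful to arrange $J$ large enough, or rather note that $\gp$-maximality and the valuation inequalities are insensitive to which extension one computes in, by Theorem~\ref{Thm: RelCritPointCrit} applied over a larger field). Quantifying over all $n \le N$ then shows $f^n - a$ is $\gp$-maximal for all $1 \le n \le N$ iff $v_\gP\big(f^m(\theta) - a\big) \le e(\gP \mid \gp)$ for all roots $\theta$ of the $\phi_i$ in $J$ and all $1 \le m \le N$, which is the claim.

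Next I would treat the vanishing case~\ref{MainLiftVan}. Here $f'(x) \equiv 0 \bmod \gp$, so $f(x) - a \equiv g(x)^p \bmod \gp$. For $n \ge 2$ one has $f^n(x) - a = f^{n-1}\big(f(x)\big) - a$, and modulo $\gp$ this is $g\big(f(x)\big)^p$ times... more precisely, $(f^n)'(x) = (f^{n-1})'(f(x)) \cdot f'(x) \equiv 0 \bmod \gp$ since $f'(x) \equiv 0$, so $\gp$ is a vanishing prime for every $f^n - a$. For $n = 1$, Theorem~\ref{Thm: RelCritPointCrit} says $\Ocal_K[\alpha_1]$ is $\gp$-maximal iff $v_\gP\big(f(\tau) - a\big) \le e(\gP \mid \gp)$ for every root $\tau$ of $g$ in $L$ — this is exactly the asserted condition. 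So the real content is to show that \emph{whenever} $f - a$ is $\gp$-maximal, automatically $f^n - a$ is $\gp$-maximal for all $n \le N$; the asserted equivalence then follows. For this: if $f - a \equiv g^p \bmod \gp$ with $g$ having \emph{no repeated factor} (which is implied by $\gp$-maximality of $f - a$, since a repeated factor $\phi$ of $g$ would be a repeated factor of $f - a$ and one checks it then divides $f - a \bmod \gp^2$, contradicting maximality via Theorem~\ref{Thm: UchidaRestatement}), then write $f^n(x) - a \equiv G_n(x)^p \bmod \gp$ for the appropriate $G_n$ and show $G_n$ is squarefree and that no lift of a factor divides $f^n - a \bmod \gp^2$. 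The cleanest route is: the factor $\phi$ of $G_n$ corresponds (via $f^{n-1}$) to a point over a root $\tau$ of $g$; pull the valuation condition $v_\gP(f(\tau) - a) \le e$ back through the iteration to control $v(f^n(\theta) - a)$ at the relevant lift point, exactly as in the non-vanishing argument but now every iterate contributes a vanishing prime so only the "first step" matters.

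The main obstacle I anticipate is the bookkeeping around \emph{which field one works in} and \emph{existence of preimages}: the critical points of $f^n$ are preimages under $f^j$ of critical points of $f$, and these preimages need not lie in $J$, nor need they be defined modulo $\gP$ if the leading coefficient interacts badly with $\gp$ — but the unit-leading-coefficient hypothesis is precisely what rules this out, keeping degrees stable mod $\gp$ and making reductions of roots well-behaved. I would handle this by invoking Theorem~\ref{Thm: RelCritPointCrit} over a sufficiently large extension (where $(f^n)'$ splits completely for all $n \le N$ simultaneously) and then observing that the valuation inequalities $v_\gP(f^m(\theta) - a) \le e(\gP \mid \gp)$ for roots $\theta$ of the $\phi_i$ are intrinsic — independent of the ambient extension — because $f^m(\theta) - a \in \Ocal_K[\theta]$ and the $\phi_i$ are fixed. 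A secondary subtlety is the degenerate possibility that some $f^j(\theta)$ is \emph{itself} a critical point (so the orbit segments overlap); this only causes the same inequality to be tested more than once, which is harmless.
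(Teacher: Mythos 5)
Your approach matches the paper's: reduce to the single-polynomial criterion (Theorem~\ref{Thm: RelCritPointCrit}), use the chain rule for $(f^n)'$ to translate between irreducible factors of $(f^n)' \bmod \gp$ and the lifts $\phi_i$ of factors of $f' \bmod \gp$, and for vanishing primes observe that the ``outer'' iterate $f^{n-1}$ pushes a root of $g(f^{n-1}(x))$ to a root of $g(x)$, so only the $n=1$ condition is ever actually tested. The paper packages the non-vanishing direction as a proof by contradiction while you argue the equivalence directly, but the ingredients (Lemma~\ref{Lem: FactorstoCritPointsBetter} both ways, the factorization $(f^n)' \equiv \prod_{t} f'(f^t(x)) \bmod \gp$, and passing to a larger extension where everything splits) are the same.

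One parenthetical claim in your vanishing-prime discussion is false, though you do not end up relying on it: $\gp$-maximality of $f(x)-a$ does \emph{not} force $g(x)$ to be squarefree modulo $\gp$. A repeated irreducible factor $\phi$ of $g \bmod \gp$ is certainly a repeated factor of $f-a \bmod \gp$, but that by itself does not make $\phi$ divide $f-a$ in $\Ocal_K/\gp^2[x]$: the $\gp$-order-$1$ part of $f-a$ is unconstrained by the congruence $f-a \equiv g^p \bmod \gp$. For instance over $\ZZ$ with $p=2$, $f(x)-a = x^6+x^4+2x+2$ satisfies $f-a \equiv (x^2(x+1))^2 \bmod 2$ with $g=x^2(x+1)$ not squarefree, yet $v_2(f(0)-a)=v_2(f(-1)-a)=1$, so the criterion of Theorem~\ref{Thm: RelCritPointCrit} gives $2$-maximality anyway. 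Your ``cleanest route'' (push a root $\tau'$ of $g(f^{n-1}(x))$ forward to $f^{n-1}(\tau')$, a root of $g$, and invoke the $n=1$ bound) is the correct one and is exactly what the paper does; just drop the squarefreeness detour.
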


\begin{proof}
First, we focus on the non-vanishing primes. Let $\gp\subset \Ocal_K$ be such a prime. 
Though there is a natural proof via induction, we proceed directly. Note Lemma \ref{Lem: FactorstoCritPointsBetter} and our hypothesis immediately imply that no $\phi_i(x)$ divides $f^n(x)-a$ in $\Ocal_K/\gp^2[x]$ for any $1\leq n\leq N$. Fix an $n$ in the range $1\leq n\leq N$. We wish to show $f^n(x)-a$ is $\gp$-maximal. We have
\begin{equation}\label{Eq: Relchainrule}
\frac{d}{dx}\Big(f^{n}(x)-a\Big)=f'\big(f^{n-1}(x)\big)\cdots f'(x)\equiv \prod_{j=1}^s \gamma_{j}(x)^{m_j} \bmod \gp,
\end{equation}
where the $\gamma_j(x)$ are lifts of the irreducible factors of $\frac{d}{dx}\big(f^{n}(x)-a\big)$ in $k_\gp[x]$ to $\Ocal_K[x]$. Let $J'$ be an extension of $J$ where all the $\gamma_j(x)$ split, and let $\Pcal$ be a prime of $J'$ above $\gP$.

For a contradiction, suppose $f^n(x)-a$ is not $\gp$-maximal. Thus, Theorem \ref{Thm: RelCritPointCrit} shows there is some $\tau\in J'$ and some $\gamma_j(x)\in \Ocal_K[x]$ such that $v_\Pcal\big(\gamma_j(\tau)\big)\geq e(\Pcal\mid \gp)$ and $v_\Pcal\big(f^n(\tau)-a\big) > e(\Pcal\mid \gp)$. Since $\gamma_j(x)$ is irreducible in $k_\gp[x]$, we see that $\gamma_j(x)$ divides $f'\big(f^t(x)\big)$ in $k_\gp[x]$ for some $0\leq t\leq n-1$. Hence $\gamma_j(x)$ divides $\phi_i\big(f^t(x)\big)$ in $k_\gp[x]$ for some $i$. Since $v_\Pcal\big(\gamma_j(\tau)\big)\geq e(\Pcal\mid \gp)$, we have $v_\Pcal\big(\phi_i\big(f^t(\tau)\big)\big)\geq e(\Pcal\mid \gp)$. As 
\[v_\Pcal\big(f^n(\tau)-a\big) = v_\Pcal\Big(f^{n-t}\big(f^t(\tau)\big)-a\Big) > e(\Pcal\mid \gp),\]
Lemma \ref{Lem: FactorstoCritPointsBetter} shows that $\phi_i(x)$ divides $f^{n-t}(x)-a$ in $\Ocal_K/\gp^2[x]$, yielding the desired contradiction. We have established \ref{MainLiftNonVan}.

Now we turn to the vanishing primes. Suppose $f'(x)\equiv 0\bmod \gp$, so $f(x)-a\equiv g(x)^p\bmod \gp$ for some $g(x)\in \Ocal_K[x]$. Let $L$ and $\gP$ be as in the theorem statement. 
For the monogenicity of $f^n(x)-a$, Theorem \ref{Thm: RelCritPointCrit} shows it is necessary that $v_\gP\big(f(\tau)-a\big)\leq e(\gP\mid \gp)$ for each root $\tau$ of $g(x)$. Suppose this is the case and consider $f^n(x)-a$ with $1\leq n \leq N$. Since $f'(x)\equiv 0\bmod \gp$, we compute
\[\frac{d}{dx}\Big(f^{n}(x)-a\Big)=f'\big(f^{n-1}(x)\big)f'\big(f^{n-2}(x)\big)\cdots f'(x)\equiv 0\bmod \gp.\]
Thus $\gp$ is a vanishing prime for $f^{n}(x)-a$, and $f^{n}(x)-a\equiv g\big(f^{n-1}(x)\big)^p\bmod \gp$. Let $L'$ be an extension of $L$ where $g\big(f^{n-1}(x)\big)$ splits completely and let $\Pcal$ be a prime of $L'$ above $\gP$. Theorem \ref{Thm: RelCritPointCrit} shows that $f^n(x)-a$ is $\gp$-maximal if and only if for any $\tau'\in L'$ that is a root of $g\big(f^{n-1}(x)\big)$, one has $v_\Pcal\big(f^n(\tau')-a\big)\leq e(\Pcal\mid \gp)$.
For a contradiction, suppose $\tau'$ is a root of $g\big(f^{n-1}(x)\big)$ in $L'$ with $v_\Pcal\big(f^n(\tau')-a\big) > e(\Pcal\mid \gp)$. Now $f^{n-1}(\tau')$ is a root of $g(x)$ in $L$. We have $v_\Pcal\big(f^n(\tau')-a\big)=e(\Pcal\mid\gP)v_\gP\big(f(f^{n-1}(\tau'))-a\big)$ and $e(\Pcal\mid\gp)=e(\Pcal\mid\gP)e(\gP\mid\gp)$, so $v_\gP\big(f(f^{n-1}(\tau'))-a\big) > e(\gP\mid \gp)$. This is a contradiction. We conclude that $v_\Pcal\big(f^n(\tau')-a\big) \leq e(\Pcal\mid \gp)$ and $f^n(x)-a$ is $\gp$ maximal for all $1\leq n\leq N$.
\end{proof}


\begin{remark} \label{Rem: CondiesnotIrred} 
Note that irreducibility is not necessarily automatic from Conditions \ref{MainLiftNonVan} and \ref{MainLiftVan} of Theorem \ref{Thm: MainLift}. Hence in general, the assumption of irreducibility cannot be dropped. An example is $f(x)=(x^2-2)(x^2-3)$, with $K=\mathbb{Q}$, $a=0$, $N=1$. The mod $2$ roots $0$ and $1$ evaluate to $2 \bmod 4$. Hence Condition \ref{MainLiftVan} holds for the only vanishing prime $2$. Moreover, $f'(x)=2x(2x^2-5)$, yielding the critical values are $6$ and $-1/4$. These have squarefree numerators, meaning that Condition \ref{MainLiftNonVan} holds for all non-vanishing primes.
\end{remark}

\begin{remark}\label{Rem: FiniteCheckLift}
Since for arithmetic dynamics applications, we intend to use results such as Theorem \ref{Thm: MainLift} for all $n\in \mathbb{N}$ simultaneously, it is worth noting the following: for any fixed prime $\gp$ of $\Ocal_K$, assumed to be non-vanishing for $f(x)$, there exists a bound $N_\gp\in \mathbb{N}$ such that Condition \ref{MainLiftNonVan} of Theorem \ref{Thm: MainLift} holds for all $n\in \mathbb{N}$ as soon as it holds for all $n\le N_\gp$. Indeed, since the field generated by all roots of $f'(x)\bmod \gp$ is a finite extension of $k_\gp$ of degree less than $\deg(f)$, the sequence $\{f^n(\tau)\mid n\in \mathbb{N}\}$ is preperiodic modulo $\gP^{e+1}$ with $\gP$ as in Condition \ref{MainLiftNonVan} and $e:=e(\gP\mid \gp)$. But, as soon as $f^n(\tau)\equiv f^m(\tau) \bmod \gP^{e+1}$, then the properties $v_\gP(f^n(\tau)-a)\le e$ and $v_\gP(f^m(\tau)-a)\le e$ are equivalent.
\end{remark}


While Theorem \ref{Thm: MainLift} is already helpful for studying the monogenicity of polynomials and their iterates, the dependence of irreducible factors $\gamma_j(x)$ on the respective prime $\gp$ is not ideal. Instead, we hope to work with the roots of $f'(x)$ in $\ol{K}$ (i.e., the critical points of $f(x)$) independent of $\gp$. The following lemma is useful in determining when this can be achieved. We employed the following notation: If $g(x)$ and $\phi(x)$ are polynomials with images in $k_\gp[x]$, then we write $v_{\phi,\gp}\big(g(x)\big)$ for the multiplicity to which $\phi(x)$ divides $g(x)$ in $k_\gp[x]$. 

\begin{lemma}[Factors, Critical Points, and Valuations]\label{Lem: FactorstoCritPoints}
    Let $h(x)$ be an irreducible factor of $f'(x)$ in $K[x]$. Suppose $\phi(x)$ is a lift of an irreducible factor of $h(x)$ in $k_\gp[x]$ to $\Ocal_K[x]$, and suppose either $v_{\phi,\gp}\big(f(x)\big)=0$ or $v_{\phi,\gp}\big(f(x)\big)>v_{\phi,\gp}\big(h(x)\big)$. 
    Let $M$ be an extension of $K$ where $f'(x)$ splits completely, and let $\gP$ be a prime of $\Ocal_M$ above $\gp$. Then $\phi(x)$ divides $f(x)$ in $\Ocal_K/\gp^2[x]$ if and only if $v_\gP\big(f(\theta)\big)>e(\gP\mid \gp)$ for some $\theta$ in $M$ that is a root of $h(x)$ such that $\phi(\theta)\equiv 0\bmod \gP$. In particular, if all the irreducible factors of $f'(x)$ in $K[x]$ satisfy the multiplicity hypothesis above, then $f(x)$ is $\gp$-maximal if and only if $v_\gP\big(f(\theta)\big)\leq e(\gP\mid \gp)$ for each critical point $\theta\in M$.
\end{lemma}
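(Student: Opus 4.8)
The plan is to bootstrap Lemma~\ref{Lem: FactorstoCritPointsBetter}, whose test condition on an element $\theta$ is $v_\gP(\phi(\theta))\ge e(\gP\mid\gp)$, up to the weaker condition ``$\theta$ is a root of $h$ with $\phi(\theta)\equiv 0\bmod\gP$'', which a priori only guarantees $v_\gP(\phi(\theta))\ge 1$. Throughout I would set $e:=e(\gP\mid\gp)$ and $m:=v_{\phi,\gp}(h)$, with $m\ge 1$ since $\phi$ is by hypothesis a lift of an irreducible factor of $h$ modulo $\gp$; I would reduce to the case $h\in\Ocal_\gp[x]$ monic (so that all roots of $h$ are $\gP$-integral, and $\bar h$ splits over $k_\gP$ once $h$ splits over $M$; note the condition $\phi(\theta)\equiv 0\bmod\gP$ already forces $\theta\in\Ocal_\gP$ with reduction a root of $\bar\phi$), and I would work over $\Ocal_\gp$, which is harmless for $\gp$-maximality by Remark~\ref{Remark: pmaximalandlocal}. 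The one genuinely new ingredient is the inequality $m\,v_\gP(\phi(\theta))\ge e$: writing $h\equiv\phi^m w\bmod\gp$ with $\gcd(\phi,w)=1$ in $k_\gp[x]$, lifting to $h=\phi^m w+\pi_\gp u$ in $\Ocal_\gp[x]$, and evaluating at a root $\theta$ of $h$, the identity $0=\phi(\theta)^m w(\theta)+\pi_\gp u(\theta)$ together with $v_\gP(w(\theta))=0$ (as $\bar\phi$ is the minimal polynomial of $\bar\theta$ over $k_\gp$ and $\bar\phi\nmid\bar w$) gives $m\,v_\gP(\phi(\theta))=e+v_\gP(u(\theta))\ge e$. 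When $m=1$ this already yields $v_\gP(\phi(\theta))\ge e$ and the lemma follows immediately from Lemma~\ref{Lem: FactorstoCritPointsBetter}; the substance is the inseparable case $m\ge 2$, where the multiplicity hypothesis is used to upgrade the divisibility $v_{\phi,\gp}(f)\ge 2$ --- which always holds in the relevant situation because $\phi$ divides both $f$ and $f'$ modulo the perfect field $k_\gp$ --- to $v_{\phi,\gp}(f)\ge m+1$.

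For the forward implication I would assume $\phi$ divides $f$ in $\Ocal_K/\gp^2[x]$. Since $\phi$ also divides $f'$ modulo $\gp$, as in the proof of Lemma~\ref{Lem: FactorstoCritPointsBetter} this lets us write $f=\phi^2 g_0+\pi_\gp\phi h_0+\pi_\gp^2 j_0$ with $g_0,h_0,j_0\in\Ocal_\gp[x]$. Reducing modulo $\gp$ gives $\bar f=\bar\phi^2\bar g_0$, so $v_{\phi,\gp}(f)\ge m+1$ forces $\bar\phi^{m-1}\mid\bar g_0$, i.e.\ $g_0=\phi^{m-1}g_1+\pi_\gp g_2$; substituting yields $f=\phi^{m+1}g_1+\pi_\gp\phi^2 g_2+\pi_\gp\phi h_0+\pi_\gp^2 j_0$. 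Evaluating at a root $\theta$ of $h$ with $\phi(\theta)\equiv 0\bmod\gP$, the leading term has $\gP$-valuation $\ge(m+1)v_\gP(\phi(\theta))\ge(m+1)e/m>e$ by the inequality above, and the remaining three terms have valuations $\ge e+2$, $\ge e+1$, and $\ge 2e$, each $>e$; hence $v_\gP(f(\theta))>e$. As such a $\theta$ exists ($\bar h$ splits over $k_\gP$ and $\bar\phi\mid\bar h$), this proves the ``if'' direction, even in the strong form that \emph{every} such $\theta$ works.

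For the reverse implication I would assume $v_\gP(f(\theta))>e$ for some root $\theta$ of $h$ with $\phi(\theta)\equiv 0\bmod\gP$. Then $\bar\phi$ divides $\bar f$ (it is the minimal polynomial of $\bar\theta$ and $f(\theta)\in\gP$) and divides $\bar{f'}$ (as $\phi\mid h\mid f'$), so $\bar\phi^2\mid\bar f$ over the perfect field $k_\gp$, whence $v_{\phi,\gp}(f)\ge m+1$ by the multiplicity hypothesis. Writing $f=\phi^{m+1}g+\pi_\gp r$ in $\Ocal_\gp[x]$ and evaluating at $\theta$, the first term has valuation $\ge(m+1)e/m>e$ while $v_\gP(f(\theta))>e$, so $v_\gP(\pi_\gp r(\theta))>e$, i.e.\ $v_\gP(r(\theta))\ge 1$ and $\bar\phi\mid\bar r$; thus $r=\phi r_1+\pi_\gp r_2$ and $f=\phi^{m+1}g+\pi_\gp\phi r_1+\pi_\gp^2 r_2\in(\pi_\gp,\phi)^2$ (using $m+1\ge 2$), which is precisely divisibility of $f$ by $\phi$ in $\Ocal_K/\gp^2[x]$.

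For the ``in particular'' clause I would combine this equivalence with Theorem~\ref{Thm: RelCritPointCrit}: the distinct irreducible factors of $f'$ modulo $\gp$ are exactly the minimal polynomials over $k_\gp$ of the $\gP$-integral critical points of $f$, and each divides $\bar h$ for a monic $\gp$-integral irreducible factor $h$ of $f'$ over $K$, so when every such $h$ meets the multiplicity hypothesis one may replace the auxiliary test elements of Theorem~\ref{Thm: RelCritPointCrit} by the critical points themselves; critical points failing to be $\gP$-integral occur only when $\gp\mid\deg f$, have negative valuation, and so trivially satisfy $v_\gP(f(\theta))\le e$. The step I expect to be the main obstacle is the combination in the first paragraph: the lower bound $m\,v_\gP(\phi(\theta))\ge e$ married to the divisibility $v_{\phi,\gp}(f)\ge m+1$, which is exactly what the multiplicity hypothesis supplies and is essential precisely when $\bar h$ is inseparable at $\phi$; the leading-coefficient bookkeeping in the reduction to $h\in\Ocal_\gp[x]$ and in the last clause is routine but must be handled with care when $\gp\mid\deg f$.
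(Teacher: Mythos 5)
Your proposal is correct and follows essentially the same route as the paper's own proof. Both arguments rest on the same two computations: (i) from $h\equiv\phi^{v_{\phi,\gp}(h)}\cdot w\bmod\gp$ with $\gcd(\bar\phi,\bar w)=1$ and $h(\theta)=0$, one extracts $v_{\gp,\gP}(h)\cdot v_\gP(\phi(\theta))\ge e(\gP\mid\gp)$; and (ii) the multiplicity hypothesis upgrades $v_{\phi,\gp}(f)>0$ to $v_{\phi,\gp}(f)>v_{\phi,\gp}(h)$, which is then fed into the expansion $f=\phi^{v_{\phi,\gp}(f)}g+(\text{terms in }\gp)$ to push $v_\gP(f(\theta))$ past $e(\gP\mid\gp)$, and conversely. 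Your bookkeeping is slightly more explicit than the paper's (you track the exact membership $f\in(\pi_\gp,\phi)^2$ and the non-$\gP$-integral critical points when $\gp\mid\deg f$, which the paper leaves implicit), and for the final clause you invoke Theorem~\ref{Thm: RelCritPointCrit} where the paper cites Theorem~\ref{Thm: UchidaRestatement}, but these are immaterial differences: the decomposition, the role of the multiplicity hypothesis, and the valuation estimates are identical.
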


\begin{proof}
    For ease of notation, let $k=v_{\phi,\gp}\big(h(x)\big)$ and $m=v_{\phi,\gp}\big(f(x)\big)$. If $v_{\phi,\gp}\big(f(x)\big)=0$, then the result is clear, so suppose $\phi(x)$ divides $f(x)$ in $\Ocal_K/\gp^2[x]$. Let $\theta$ in $M$ be a critical point such that $\phi(\theta)\equiv 0\bmod \gP$. We have $h(x)=\phi(x)^kr(x) + d$ for some $r(x)\in\Ocal_K[x]$ that is relatively prime to $\phi(x)$ in $k_\gp[x]$ and some $d\in \gp\Ocal_K[x]$. Since $h(\theta)=0$, we see that $v_\gP\big(\phi(\theta)^k\big)\geq e(\gP\mid \gp)$. We can write $f(x)=\phi(x)^mg(x)+b\phi(x)+c$ with $b\in\gp\Ocal_K[x]$ and $c\in\gp^2\Ocal_K[x]$. Since $m>k$, we evaluate at $\theta$ to obtain $v_\gP\big(f(\theta)\big)>e(\gP\mid \gp)$.

    Conversely, suppose $v_\gP\big(f(\theta)\big)>e(\gP\mid \gp)$ for some critical point $\theta$ in $M$ such that $\phi(\theta)\equiv 0\bmod \gP$. We have $h(x)=\phi(x)^kr(x) + d$ for some $r(x)\in\Ocal_K[x]$ that is relatively prime to $\phi(x)$ in $k_\gp[x]$ and some $d\in \gp\Ocal_K[x]$. Since $h(\theta)=0$, we see that $v_\gP\big(\phi(\theta)^k\big)\geq e(\gP\mid \gp)$. We write $f(x)=\phi(x)^mg(x)+bj(x)+c$ with $b\in \gp$ and $c\in \gp^2\Ocal_K[x]$. Since $v_\gP\big(\phi(\theta)^mg(\theta)\big)>e(\gP\mid \gp)$ and $v_\gP\big(f(\theta)\big)>e(\gP\mid \gp)$, we see $v_\gP\big(bj(\theta)\big)>e(\gP\mid \gp)$. By a now familiar argument, $\phi(x)$ divides $f(x)$ in $\Ocal_K/\gp^2[x]$.

    The last statement is obtained by applying Theorem \ref{Thm: UchidaRestatement} and noting that any repeated factor of $f(x)$ in $k_\gp[x]$ will be a factor of $f'(x)$ in $k_\gp[x]$.
\end{proof}


As we will see in Theorem \ref{Thm: MainNonExceptional},
the conclusion of Theorem \ref{Thm: MainLift} for a non-vanishing prime 
$\gp$ remains valid if we replace the roots of the lifts $\prod_{i=1}^r \phi_i(x)^{e_i}\equiv f'(x)\bmod \gp$ by the global roots of $f'(x)$ itself as soon as the following condition holds:

\begin{condition}\label{Cond: NonExceptional}
    For every irreducible factor $h(x)$ of $f'(x)$ in $K[x]$, every irreducible factor $\phi(x)$ of $h(x)$ in $k_\gp[x]$, and every $n\in \mathbb{N}$ such that $v_{\phi(x)}(f^n(x)-a)>0$, we have $v_{\phi(x)}(f^n(x)-a)>v_{\phi(x)}(h(x))$. 
\end{condition}


Let us call a prime $\gp$ of $\Ocal_K$ a {\it non-exceptional} prime for $(f(x),a)$, if $f'(x)\not\equiv 0 \bmod \gp$ and furthermore Condition \ref{Cond: NonExceptional} holds for the given $(f(x),a)$. If this condition holds for all $a\in \Ocal_K$, we call $\gp$ a non-exceptional prime for $f(x)$. Naturally, we call $\gp$ \textit{exceptional}, for $(f(x),a)$ or $f(x)$, if these conditions do not hold.

We collect some important observations facilitating the verification of non-exceptionality in important cases. 
\begin{lemma}\label{Lem: Exceptional<=deg}
    Let $\gp$ be a prime of $K$ with residue characteristic $p$. Then the following hold:
    \begin{enumerate}[label=\alph*), ref=\alph*)]
    \item If the cover $\mathbb{P}^1_{\overline{\mathbb{F}_p}}\to \mathbb{P}^1_{\overline{\mathbb{F}_p}}$ given by $x\mapsto f(x)$ is tamely ramified, then $p$ is non-exceptional for $f(x)$. In particular, this holds whenever $p>\deg(f)$.\label{a)ramcover}
    \item If every irreducible factor of $f'(x)$ is separable modulo $\gp$, then $\gp$ is non-exceptional for $f(x)$. \label{b)sepfactors}
    \end{enumerate}
\end{lemma}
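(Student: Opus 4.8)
The plan is to translate Condition~\ref{Cond: NonExceptional} into a statement about orders of vanishing at a critical point over $\overline{k_\gp}$, and then to feed that statement the multiplicativity of ramification indices under composition. Fix an irreducible factor $h(x)$ of $f'(x)$ in $K[x]$, chosen primitive in $\Ocal_\gp[x]$; an irreducible factor $\phi(x)$ of $\overline{h}(x)$ in $k_\gp[x]$; and a root $\theta$ of $\phi(x)$ in $\overline{k_\gp}$. Since $\phi(x)$ is irreducible, one has $v_{\phi(x)}(g(x))=\ord_\theta(\overline{g})/\ord_\theta(\phi)$ for every $g(x)\in k_\gp[x]$, so Condition~\ref{Cond: NonExceptional} is equivalent to the assertion that $\ord_\theta(\overline{f^n(x)-a})>\ord_\theta(\overline{h}(x))$ whenever $\overline{f^n}(\theta)=\overline{a}$. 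I would then record, via Gauss's lemma over the DVR $\Ocal_\gp$ (once $\gp$ is known to be non-vanishing, which is checked in each part below), that $\overline{h}$ divides $\overline{f'}$ in $k_\gp[x]$; in particular $\overline{f'}(\theta)=0$, so $\theta$ is a critical point modulo $\gp$, and $\ord_\theta(\overline{h})\le\ord_\theta(\overline{f'})$. The key computational input is that, writing $e_f(\beta)$ for the multiplicity of $\beta$ as a root of $\overline{f}(x)-\overline{f}(\beta)$, iterating the factorization $\overline{f}(y)-\overline{f}(\beta)=(y-\beta)^{e_f(\beta)}v(y)$ (with $v(\beta)\ne 0$) yields $\ord_\theta(\overline{f^n}(x)-\overline{f^n}(\theta))=\prod_{j=0}^{n-1}e_f(\overline{f^j}(\theta))$, a product whose $j=0$ factor is $e_f(\theta)$.

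For part~(a): tameness of the cover at the point $\infty$ forces $p\nmid\deg f$, hence $\overline{f'}\ne 0$ and $\gp$ is non-vanishing --- the remaining clause in the definition of non-exceptional. Next, set $m:=e_f(\theta)$; tameness at $\theta$ says $p\nmid m$, and differentiating $\overline{f}(x)-\overline{f}(\theta)=(x-\theta)^m v(x)$ gives $\ord_\theta(\overline{f'})=m-1$. When $\overline{f^n}(\theta)=\overline{a}$, so that $\overline{f^n(x)-a}=\overline{f^n}(x)-\overline{f^n}(\theta)$, the product formula above gives $\ord_\theta(\overline{f^n(x)-a})\ge e_f(\theta)=m>m-1=\ord_\theta(\overline{f'})\ge\ord_\theta(\overline{h})$, which is exactly Condition~\ref{Cond: NonExceptional}. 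As this is uniform in $a$, $\gp$ is non-exceptional for $f(x)$. For the final clause: if $p>\deg f$, then every ramification index of the cover (each being at most $\deg f$) is coprime to $p$, so the cover is tame and part~(a) applies.

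For part~(b): here the hypothesis makes every such $\overline{h}$ separable, so $v_{\phi(x)}(\overline{h})=1$ and $\phi$ itself is separable, giving $\ord_\theta(\phi)=1$ and $\ord_\theta(\overline{h})=1$; the hypothesis also presupposes $\overline{f'}\ne 0$, so $\gp$ is non-vanishing. It remains to show $\ord_\theta(\overline{f^n(x)-a})\ge 2$ whenever $\overline{f^n}(\theta)=\overline{a}$. But $\theta$ is a root of $\overline{h}$, which divides $\overline{f'}$, so $\overline{f'}(\theta)=0$; reducing the chain rule $(f^n)'(x)=\prod_{j=0}^{n-1}f'(f^j(x))$ modulo $\gp$ and evaluating at $\theta$ shows, via its $j=0$ factor, that $\theta$ is also a root of $\overline{(f^n-a)'}$. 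Hence $\theta$ is a common root of $\overline{f^n(x)-a}$ and its derivative, so it has multiplicity at least $2$; together with non-vanishingness and uniformity in $a$ this gives that $\gp$ is non-exceptional for $f(x)$.

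The main obstacle is not conceptual but bookkeeping: making precise, through Gauss's lemma over $\Ocal_\gp$, that a $K$-irreducible factor of $f'(x)$ reduces (up to a unit) to a divisor of $\overline{f'}$; and reading ``separable modulo $\gp$'' in~(b) so that it implicitly entails $f'(x)\not\equiv 0\bmod\gp$, since otherwise an example such as $f(x)=x^2$ with $\gp=(2)$ --- where $(2)$ is a vanishing prime --- would be a spurious apparent counterexample. Granting this, the order-of-vanishing and chain-rule computations sketched above are routine.
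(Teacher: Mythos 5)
Your proof is correct and is essentially the approach of the paper. Both rely on the same two ingredients: for (a), tameness (hence $p\nmid e_f(\theta)$) plus multiplicativity of ramification indices under composition; for (b), the standard fact that a common root of a polynomial and its derivative is a repeated root, applied to $f^n(x)-a$ via the chain rule. The only presentational difference in (a) is where you run the derivative computation: the paper observes that the multiplicity of $\phi$ in $\overline{f^n(x)-a}$ is coprime to $p$ and compares with $\tfrac{d}{dx}f^n$, then passes ``a fortiori'' to $f'$, whereas you compute $\ord_\theta(\overline{f'})=e_f(\theta)-1$ directly at level one and use the product formula only to deduce the lower bound $\ord_\theta(\overline{f^n(x)-a})\ge e_f(\theta)$. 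This is a mild simplification since you never need coprimality of the full product to $p$. Your flag that ``separable modulo $\gp$'' in (b) must implicitly include $f'(x)\not\equiv 0\bmod\gp$ (as the definition of non-exceptionality demands, and as $f(x)=x^2$, $\gp=(2)$ shows) is an accurate and worthwhile remark; the paper leaves this implicit.
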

\begin{proof}
    To show \ref{a)ramcover}, it suffices to prove that  $f^n(x)-a$ has no repeated factors of multiplicity divisible by $p$ in $k_\gp[x]$ for any $n\in \mathbb{N}$, since indeed then the multiplicity of any irreducible factor decreases strictly in $\frac{d}{dx}(f^n(x))$ and thus a fortiori in $f'(x)$. Since the ramification index of a place $x\mapsto \alpha\in \overline{\mathbb{F}_p}$ under the cover $x\mapsto f(x)$ is simply the multiplicity of $x-\alpha$ in $f(x)-f(\alpha)$, tame ramification translates to saying that $f(x)-a$ has no root of multiplicity divisible by $p$ for any $a\in \overline{\mathbb{F}_p}$. This shows the claim for $n=1$. For arbitrary $n$, the claim simply follows by multiplicativity of ramification indices under composition of covers.

 \ref{b)sepfactors} follows from the definition: Let $h(x)$ be an irreducible factor of $f'(x)$ in $K[x]$. A simple factor of $h(x)$ that is a common factor of $f'(x)$ and $f^n(x)-a$ must necessarily be a repeated factor of $f^n(x)-a$. 
\end{proof}

The definition of non-exceptionality implies the following simplification of Theorem \ref{Thm: MainLift} for non-exceptional primes. 

\begin{theorem}[The Critical Point Criterion for Non-Exceptional Primes]\label{Thm: MainNonExceptional}
Let $K$ be a number field. Fix $N>0$. Suppose $f(x)\in \Ocal_K[x]$ has a leading coefficient that is a unit, and let $a\in\Ocal_K$ be such that $f^n(x)-a$ is irreducible over $K$ for $1\leq n\leq N$. Let $M$ be an extension of $K$ where $f'(x)$ splits. 
Let $\gp$ be a prime of $\Ocal_K$ which is non-exceptional for $(f(x),a)$, and let $\gP$ be a prime of $M$ above $\gp$. Then the polynomial $f^n(x)-a$ is $\gp$-maximal for each $1\leq n\leq N$ if and only if for each $1\leq n\leq N$ and each critical point $\theta\in M$, one has 
$v_\gP\big(f^n(\theta)-a\big) \leq e(\gP\mid \gp)$. \label{MainNonExceptional}
\end{theorem}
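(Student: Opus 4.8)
The plan is to deduce this from Condition~\ref{MainLiftNonVan} of Theorem~\ref{Thm: MainLift} together with Lemma~\ref{Lem: FactorstoCritPoints}, the point being that Condition~\ref{Cond: NonExceptional} is exactly what is needed to exchange the roots of the mod-$\gp$ lifts $\phi_i(x)$ of $f'(x)$ for the genuine critical points of $f(x)$. Since $\gp$ is non-exceptional it is in particular non-vanishing, so we may write $f'(x)\equiv\prod_{i=1}^{r}\phi_i(x)^{e_i}\bmod\gp$ with the $\phi_i(x)\in\Ocal_K[x]$ lifts of the distinct irreducible factors, so Condition~\ref{MainLiftNonVan} of Theorem~\ref{Thm: MainLift} is the applicable one.

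The first step is to record an intrinsic reformulation of Theorem~\ref{Thm: MainLift}: applying Lemma~\ref{Lem: FactorstoCritPointsBetter} to each polynomial $f^m(x)-a$ and each $\phi_i(x)$ shows that ``$v_\gP\big(f^m(\theta)-a\big)\le e(\gP\mid\gp)$ for every root $\theta$ of $\phi_i(x)$'' is equivalent to ``$\phi_i(x)\nmid f^m(x)-a$ in $\Ocal_K/\gp^2[x]$''. Thus Condition~\ref{MainLiftNonVan} of Theorem~\ref{Thm: MainLift} says precisely that $f^n(x)-a$ is $\gp$-maximal for every $1\le n\le N$ if and only if no $\phi_i(x)$ divides $f^m(x)-a$ in $\Ocal_K/\gp^2[x]$ for any $1\le m\le N$; call this (splitting-field-free) condition $(\star)$.

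The second step translates $(\star)$ into a statement about critical points using Lemma~\ref{Lem: FactorstoCritPoints}. Fix $1\le m\le N$ and an index $i$. Since $\phi_i(x)$ is irreducible in $k_\gp[x]$ and divides $f'(x)$ there, there is an irreducible factor $h(x)$ of $f'(x)$ in $K[x]$ with $\phi_i(x)\mid h(x)$ in $k_\gp[x]$ (factor $f'(x)$ over $\Ocal_\gp$, where it is primitive because $\gp$ is non-vanishing). As $f'(x)$ divides $\big(f^m(x)-a\big)'=\prod_{j=0}^{m-1}f'\!\big(f^j(x)\big)$, this $h(x)$ is also an irreducible factor of $\big(f^m(x)-a\big)'$ in $K[x]$, and Condition~\ref{Cond: NonExceptional} applied to $\big(h(x),\phi_i(x),m\big)$ gives that $v_{\phi_i(x)}\big(f^m(x)-a\big)$ is either $0$ or strictly greater than $v_{\phi_i(x)}\big(h(x)\big)$. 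These are exactly the hypotheses of Lemma~\ref{Lem: FactorstoCritPoints} for the polynomial $f^m(x)-a$, the factor $h(x)$ of its derivative, and the lift $\phi_i(x)$; moreover the proof of that lemma only evaluates at roots of $h(x)$, which are critical points of $f(x)$ lying in $M$, so it is enough that $M$ split $f'(x)$. Hence $\phi_i(x)$ divides $f^m(x)-a$ in $\Ocal_K/\gp^2[x]$ if and only if $v_\gP\big(f^m(\theta)-a\big)>e(\gP\mid\gp)$ for some critical point $\theta\in M$ of $f(x)$ with $\phi_i(\theta)\equiv0\bmod\gP$.

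Finally, one assembles the pieces. The negation of $(\star)$ holds if and only if there are $1\le m\le N$ and a critical point $\theta\in M$ with $v_\gP\big(f^m(\theta)-a\big)>e(\gP\mid\gp)$: the previous step gives one direction, and conversely such a $\theta$ necessarily has $v_\gP(\theta)\ge0$ (otherwise $v_\gP\big(f^m(\theta)-a\big)=\deg(f^m)\,v_\gP(\theta)<0\le e(\gP\mid\gp)$, since $f$ has unit leading coefficient), so $\theta$ reduces modulo $\gP$ to a root of $f'(x)\bmod\gp$, hence of some $\phi_i(x)$; choosing $h(x)$ to be a $K$-irreducible factor of $f'(x)$ vanishing at $\theta$ and invoking the previous step forces $\phi_i(x)\mid f^m(x)-a$ in $\Ocal_K/\gp^2[x]$. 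Combining this with the reformulation $(\star)$ of Theorem~\ref{Thm: MainLift} gives the theorem. I expect the genuine subtlety to be the bookkeeping in the third paragraph: Condition~\ref{Cond: NonExceptional} constrains multiplicities only against irreducible factors of $f'(x)$, whereas Lemma~\ref{Lem: FactorstoCritPoints} applied to $f^m(x)-a$ nominally concerns irreducible factors of $\big(f^m(x)-a\big)'$ — higher powers of $\phi_i(x)$ can occur in the latter. The resolution is that one is free to take $h(x)$ among the factors of $f'(x)$, which are a fortiori factors of $\big(f^m(x)-a\big)'$, and for such $h(x)$ the needed multiplicity inequality is literally Condition~\ref{Cond: NonExceptional}; the critical points of negative $\gP$-valuation are a harmless side issue disposed of as above.
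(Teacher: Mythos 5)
Your proof is correct, and it is a somewhat different (and arguably cleaner) arrangement of the same underlying ingredients. The paper's proof does not pass through Theorem~\ref{Thm: MainLift} at all: it works in a larger field $M'$ splitting $\frac{d}{dx}\big(f^n(x)-a\big)$, translates the critical points $\tau$ of $f^n(x)-a$ to critical points $f^t(\tau)$ of $f(x)$ via the chain rule, and then invokes the ``in particular'' clause at the end of Lemma~\ref{Lem: FactorstoCritPoints} applied to $f^n(x)-a$ itself. That invocation requires the multiplicity hypothesis of Lemma~\ref{Lem: FactorstoCritPoints} for \emph{every} irreducible factor $H(x)$ of $\big(f^n(x)-a\big)'$ in $K[x]$, not just those dividing $f'(x)$; since Condition~\ref{Cond: NonExceptional} as stated concerns only factors of $f'(x)$, the paper is implicitly relying on a further chain-rule-of-multiplicities computation showing that the needed inequality propagates to all of $\big(f^n(x)-a\big)'$ (the estimate $\mathrm{mult}_\rho\big(f^n(x)-a\big) = \mathrm{mult}_\sigma\big(f^{n-t}(x)-a\big)\cdot\mathrm{mult}_\rho\big(f^t(x)-\sigma\big) > \mathrm{mult}_\sigma(h)\cdot\mathrm{mult}_\rho\big(f^t(x)-\sigma\big) = \mathrm{mult}_\rho\big(h(f^t(x))\big) \ge \mathrm{mult}_\rho(H)$, where $\sigma = f^t(\rho)$ and $h\mid f'$ with $H\mid h(f^t(x))$). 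Your route sidesteps this entirely: by first reformulating Theorem~\ref{Thm: MainLift} via Lemma~\ref{Lem: FactorstoCritPointsBetter} into the splitting-field-free condition $(\star)$ and then applying only the \emph{per-factor} statement of Lemma~\ref{Lem: FactorstoCritPoints} with $h(x)$ deliberately chosen among the irreducible factors of $f'(x)$ (which are a fortiori factors of $\big(f^m(x)-a\big)'$), Condition~\ref{Cond: NonExceptional} is literally the hypothesis you need and no propagation argument is required. Your two additional observations --- that the proof of Lemma~\ref{Lem: FactorstoCritPoints} only ever evaluates at roots of $h(x)$, so it suffices for $M$ to split $f'(x)$, and that critical points of negative $\gP$-adic valuation cannot violate the valuation bound because the leading coefficient of $f$ is a unit --- are both correct and tidy up points the paper leaves implicit.
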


\begin{proof}
    Let $1\leq n\leq N$ and consider $\frac{d}{dx}\big(f^{n}(x)-a\big)=f'\big(f^{n-1}(x)\big)\cdots f'(x).$
    Let $M'$ be an extension of $M$ where $\frac{d}{dx}(f^n(x)-a)$ splits, and let $\Pcal$ be a prime of $M'$ above $\gP$. If $\tau\in M'$ is a critical point of $f^n(x)-a$, then $\tau$ is a root of $f'\big(f^t(x)\big)$ for some $0\leq t\leq n-1$. Thus $f^t(\tau)$ is a root of $f'(x)$. From this and the multiplicative nature of valuations, we see $v_{\Pcal}\big(f^n(\tau)-a\big)>e(\Pcal\mid \gp)$ for some $\tau$ that is a critical point of $f^n(x)-a$ if and only if $v_\gP\big(f^{n-t}(\theta)-a\big) > e(\gP\mid \gp)$ for some $\theta=f^t(\tau)$ that is a critical point of $f(x)-a$. Lemma \ref{Lem: FactorstoCritPoints} shows $v_{\Pcal}\big(f^n(\tau)-a\big)>e(\Pcal\mid \gp)$ for some $\tau$ that is a critical point of $f^n(x)-a$ if and only if $f^n(x)-a$ is not $\gp$-maximal.
\end{proof}   

Combining Theorem \ref{Thm: MainNonExceptional} together with Theorem \ref{Thm: MainLift} to deal with the finitely many exceptional primes yields a proof of Theorem \ref{Thm: MainRelativeOrbits}. 

\begin{remark}\label{Rem: MonPolyMonField2}
Theorem \ref{Thm: MainRelativeOrbits} gives necessary and sufficient conditions for the iterates of any monic polynomial $f(x)\in\Ocal_K[x]$ to be monogenic; however, even when a polynomial fails to be monogenic, the field generated by a root may be monogenic. For a simple example consider $f(x)=x^2$ and let $a\in\ZZ^{>1}$. We see that $f^n(x)+a^{2^n}=x^{2^n}+a^{2^n}$ is not monogenic. It fails to be $p$-maximal at all primes dividing $a$. However, $f^n(x)+a^{2^n}$ generates the same field as $x^{2^n}+1$, and the $2$-power cyclotomic fields are monogenic with monogenerator $\zeta_{2^{n+1}}$. Indeed, Theorem \ref{Thm: MainNonExceptional} gives a short proof: Each odd prime $p$ is a non-exceptional prime, and $v_p(0+1)=0$. The prime 2 is a vanishing prime, and $v_2(1^2+1)=1$.
\end{remark}

\begin{remark}\label{Rem: MonPolyMonFieldDedekind}
The assertion of Theorem \ref{Thm: MainNonExceptional} no longer holds for (non-vanishing) exceptional primes $\gp$. Indeed take Dedekind's example \cite{Dedekind}. Let $f(x)=x^3 - x^2 - 2x - 8\in \mathbb{Z}[x]$, $a=0$, and $p=2$. We have $f'(x)\equiv x^2\bmod 2$, and evaluating $v_2\big(f(0)\big)=v_2(-8) = 3 > 1$, we see from Theorem \ref{Thm: MainLift} that $f(x)$ is not $2$-maximal. On the other hand, the global roots of $f'(x)=3x^2-2x-2$ are $\theta = \frac{1\pm \sqrt{7}}{3}$. Choosing $\gP$ as the (unique, since ramified) prime extending $2$ in $\mathbb{Q}(\sqrt{7})$, we get $v_\gP\big(f(\theta)\big) = v_\gP\big(\frac{1}{27}(\pm 14\sqrt{7} - 236)\big) = 2 = e(\gP\mid 2)$, showing that an attempted application of Theorem \ref{Thm: MainNonExceptional} with the global roots of $f'(x)$ would lead to an erroneous result.
\end{remark}

For many of the examples we work with we do not need the field extensions $J/K$ and $L/K$ in Theorem \ref{Thm: MainLift} or $M/K$ in Theorem \ref{Thm: MainNonExceptional}. 
The following corollary captures this simplified situation. Note that we will assume here and in similar scenarios in the following sections that all non-vanishing primes are non-exceptional, an assumption that will be fulfilled in all of our applications. This assumption is, however, only made for sake of simplicity. Without it, finitely many further $K$-rationality conditions would have to be added to accommodate the finitely many (by Remark \ref{Rem: FiniteCheckLift}) relevant values $n\in \mathbb{N}$ for the finitely many exceptional non-vanishing primes.

\begin{corollary}\label{Cor: CritsinK}
    With the hypotheses as in Theorems \ref{Thm: MainLift} and \ref{Thm: MainNonExceptional}, suppose that 
    $f(x)$ has $K$-rational critical values, i.e., $f(\theta)\in K$ for each critical point $\theta$ of $f(x)$.
    Suppose further that every exceptional prime for $f(x)$ is a vanishing prime. For each such prime $\gp$, write $f(x)-a\equiv g(x)^p \bmod \gp$ 
    where $g(x)\in \Ocal_K[x]$, and suppose that for each root $\tau$ of $g(x)$ it holds that $f(\tau)\in \Ocal_K$.
    Then, $f^n(x)-a$ is monogenic for all $n\leq N$ if and only if both of the following conditions hold:
\begin{enumerate}[label=\textbf{\arabic*.}, ref=\arabic*]
        \item (Non-Vanishing Primes) If $f'(x)\not\equiv 0\bmod \gp$, then $v_\gp\big(f^n(\theta)-a\big)\leq 1$ for each critical point $\theta$ and each $n\leq N$. \label{MainCor1.}
        \item (Vanishing Primes) If $f'(x)\equiv 0\bmod \gp$, then $v_\gp\big(f(\tau)-a\big)\leq 1$ for each root $\tau$ of $g(x)$. \label{MainCor2.}
\end{enumerate}
\end{corollary}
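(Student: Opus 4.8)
The plan is to reduce the corollary to Theorems~\ref{Thm: MainNonExceptional} and~\ref{Thm: MainLift}, using the rationality hypotheses only to collapse valuations over a prime $\gP$ of a splitting field down to valuations over $\gp$. The first step is the standard reduction: since each $f^n(x)-a$ is, up to its unit leading coefficient, monic and, by hypothesis, irreducible, ``$f^n(x)-a$ is monogenic'' is equivalent to ``$\Ocal_K[\alpha_n]$ is $\gp$-maximal for \emph{every} prime $\gp$ of $\Ocal_K$'', where $\alpha_n$ denotes a root. So it suffices to fix a prime $\gp$ and show that $\gp$-maximality of $f^n(x)-a$ for all $1\le n\le N$ is equivalent to the applicable one of Conditions~\ref{MainCor1.} and~\ref{MainCor2.} at $\gp$. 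I would then record the bookkeeping observation that the non-vanishing primes are exactly the non-exceptional ones: a prime can be non-exceptional only if $f'(x)\not\equiv 0\bmod\gp$, so vanishing primes are automatically exceptional, while the standing hypothesis supplies the converse (any prime exceptional for $(f(x),a)$ is, a fortiori, exceptional for $f(x)$, hence vanishing). Thus each prime is governed by exactly one of the two theorems.

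For a non-vanishing, hence non-exceptional, prime $\gp$, Theorem~\ref{Thm: MainNonExceptional} says that $f^n(x)-a$ is $\gp$-maximal for all $1\le n\le N$ if and only if $v_\gP(f^n(\theta)-a)\le e(\gP\mid\gp)$ for every critical point $\theta\in M$ and every such $n$, where $M/K$ splits $f'(x)$ and $\gP$ lies over $\gp$. This is where the hypothesis of $K$-rational critical values is used: since $f(\theta)\in K$ and $f\in\Ocal_K[x]$, we have $f^n(\theta)=f^{n-1}(f(\theta))\in K$ for every $n\ge 1$, so $v_\gP(f^n(\theta)-a)=e(\gP\mid\gp)\,v_\gp(f^n(\theta)-a)$ and the criterion collapses to $v_\gp(f^n(\theta)-a)\le 1$, which is precisely Condition~\ref{MainCor1.} at $\gp$. (Note that only $f(\theta)\in K$ is needed, not integrality of $f(\theta)$.)

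For a vanishing, hence exceptional, prime $\gp$ I would invoke part~\ref{MainLiftVan} of Theorem~\ref{Thm: MainLift}. With $f(x)-a\equiv g(x)^p\bmod\gp$, $g\in\Ocal_K[x]$, as in the hypothesis (such $g$ exists because $f'(x)\equiv 0\bmod\gp$ forces $f(x)-a\bmod\gp\in k_\gp[x^p]$ and $k_\gp$ is perfect), that theorem says $f^n(x)-a$ is $\gp$-maximal for all $1\le n\le N$ if and only if $v_\gP(f(\tau)-a)\le e(\gP\mid\gp)$ for every root $\tau$ of $g(x)$, with $L/K$ a splitting field of $g(x)$ and $\gP$ over $\gp$. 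The standing hypothesis $f(\tau)\in\Ocal_K$ for these $\tau$ gives $v_\gP(f(\tau)-a)=e(\gP\mid\gp)\,v_\gp(f(\tau)-a)$, so the criterion becomes $v_\gp(f(\tau)-a)\le 1$, i.e.\ Condition~\ref{MainCor2.} at $\gp$. Combining these prime-by-prime equivalences over all $\gp$, and over $1\le n\le N$, yields both directions of the corollary.

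I do not anticipate a real obstacle here: the corollary is a clean specialization of theorems already in hand. The only points requiring care are (i) checking that ``vanishing vs.\ non-vanishing'' coincides with ``exceptional vs.\ non-exceptional'' under the hypotheses, so that each prime is covered by exactly one theorem with no gap or overlap; and (ii) keeping straight which rationality input drives which valuation reduction --- $f(\theta)\in K$ for the critical points of $f(x)$ in the non-vanishing case, and $f(\tau)\in\Ocal_K$ for the roots of $g(x)$ in the vanishing case.
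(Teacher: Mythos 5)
Your proposal is correct and follows essentially the same route as the paper: reduce to prime-by-prime $\gp$-maximality, route non-vanishing primes through Theorem~\ref{Thm: MainNonExceptional} and vanishing primes through part~\ref{MainLiftVan} of Theorem~\ref{Thm: MainLift}, and use the rationality hypotheses to collapse $v_\gP(\cdot)\le e(\gP\mid\gp)$ to $v_\gp(\cdot)\le 1$. Your explicit check that the hypothesis forces ``non-vanishing'' to coincide with ``non-exceptional'' (so each prime falls under exactly one theorem), and your observation that for critical points only $f(\theta)\in K$ rather than $f(\theta)\in\Ocal_K$ is needed for the valuation collapse, are small clarifications over the paper's terser write-up but do not constitute a different method.
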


\begin{proof}
    For the non-vanishing primes we let $M$ be an extension where $f'(x)$ splits and let $\gP$ be a prime of $M$ above $\gp$. Since $f(\theta)\in\Ocal_K$ for each critical point $\theta$, we have $v_\gP\big(f^n(\theta)-a\big)\leq e(\gP\mid \gp)$ if and only if $v_\gp\big(f^n(\theta)-a\big)\leq1$.
    Condition \ref{MainCor1.} follows from Theorem \ref{Thm: MainNonExceptional}.

    For the vanishing primes, we let $L$ be an extension where $g(x)$ splits and let $\gP$ be a prime of $L$ above $\gp$. Since $f(\tau)\in\Ocal_K$ for each root $\tau$ of $g(x)$ , we have $v_\gP\big(f(\tau)-a\big)\leq e(\gP\mid \gp)$ if and only if $v_\gp\big(f(\tau)-a\big)\leq1$. Condition \ref{MainCor2.} follows from Theorem \ref{Thm: MainLift}.
\end{proof}

\begin{definition}\label{Def: DynMono} We say that a polynomial $f(x)\in\Ocal_K[x]$ is \textit{dynamically monogenic} if $f^n(x)$ is monogenic for each $n\geq 1$. More generally, we say that a pair $f(x)\in \Ocal_K[x]$ and $a\in \Ocal_K$ is a \textit{dynamically monogenic pair} if $f^n(x)-a$ is a monogenic polynomial for each $n\geq 1$. For example, classical results show that for every prime $p$, the pair $(x^p, \zeta_p)$ is a dynamically monogenic pair over $\QQ(\zeta_p)$. Note that some authors use the term \textit{dynamically stable} (cf. \cite{LJones2021}).

In a related vein, we call a polynomial $f(x)$ (resp., a pair $(f(x),a)$) \textit{dynamically irreducible} if $f^n(x)$ (resp., $f^{n}(x)-a$) is irreducible for all $n\geq 1$. This is also called \textit{stability} in the literature. Notice that dynamical irreducibility is implied by dynamical monogenicity.
\end{definition}

\begin{remark}
A polynomial is called \textit{post-critically finite} (\textit{PCF}, for short) if the forward orbits of each of its critical points are finite. Theorem \ref{Thm: MainRelativeOrbits} points toward how dynamical monogenicity relates to post-critically finite polynomials. Suppose all iterates of a monic PCF polynomial $f(x)\in\Ocal_K[x]$ are  irreducible. Then, there are only finitely many conditions to check in order to see whether $f(x)$ is dynamically monogenic, or whether $(f(x),a)$ is a dynamically monogenic pair. 

Generally, if $f^n(x)$ is monogenic, then $f^k(x)$ is monogenic for all $k\leq n$. In fact, Proposition 3.4 of \cite{SmithWolske} shows that $f^n(x)$ is monogenic if and only if $f^k(x)$ is monogenic for all $k\leq n$. If $f(x)$ is PCF and dynamically irreducible, then let $l$ be the maximum length of a critical orbit. Theorem \ref{Thm: RelCritPointCrit} in conjunction with \cite[Proposition 3.4]{SmithWolske} shows that if $f^l(x)$ is monogenic, then $f^n(x)$ is monogenic for all $n>1$. Thus, if $f(x)$ is dynamically irreducible, then $f(x)$ is PCF if and only if there exists some $l\geq 1$ such that (from the perspective of Theorem \ref{Thm: MainRelativeOrbits}) the conditions for the monogenicity of $f^l(x)$ imply the monogenicity of $f^n(x)$ for all $n\geq 1$. 
If, conversely, $f(x)\in \mathbb{Z}[x]$ is not PCF, then it is known 
 that for any critical point $\theta$ with infinite forward orbit and any $a$ which is not a postcritical point of $f(x)$, there are infinitely many primes $\gp$ with $v_\gp(f^n(\theta)-a)\ge 1$ for at least one $n\in \mathbb{N}$ (see, e.g., \cite[Lemma12]{BIJJLMRS} for a somewhat stronger assertion). In the context of Uchida's theorem, this means that the squarefreeness conditions of Corollary \ref{Cor: CritsinK}, which need to be verified in order to show monogenicity for all $n\in \mathbb{N}$ simultaneously, will necessarily involve infinitely many primes.
\end{remark}


\section{Results on dynamical irreducibility}\label{Sec: DynIrred}


The following is a statement on when and how often the conditions for dynamical monogenicity obtained in the previous section can be fulfilled. It asserts that, for a PCF polynomial fulfilling certain extra assumptions (notably concerning dynamical irreducibility and $K$-rationality of the critical values), the conditions can be fulfilled as long as there are no local obstructions.
\begin{lemma}
\label{lem:posdensity}
Let $K$ be a number field, and $f(x)\in \Ocal_K[x]$ a PCF polynomial whose leading coefficient is a unit and all of whose critical values are $K$-rational. 
Let $\Omega\subset K$ be the (finite) union of forward orbits of the critical points of $f(x)$.
Assume that $(f(x),a)$ is dynamically irreducible for all $a\in \Ocal_K$ outside of a set of relative density zero (with respect to norm). Furthermore, assume that $f(x)$ has no non-vanishing exceptional primes.
Then the following are equivalent:
\begin{enumerate}[label=\textbf{\arabic*.}, ref=\arabic*]
\item Both of the following hold: \label{LemPos1.}
\begin{enumerate}[label=\textbf{\roman*.}, ref=\roman*]
\item (Non-Vanishing Primes)
For every prime ideal $\mathfrak{p}$ of $\Ocal_K$ with $f'(x) \not\equiv 0 \bmod{\mathfrak{p}}$, there exists at least one residue class modulo $\mathfrak{p}^2$ containing no element of $\Omega$.\footnote{Note that this condition is automatically fulfilled if $|\Omega|\le 3$.} \label{LemPos1.i.}
\item (Vanishing Primes) 
For every prime ideal $\mathfrak{p}$ of $\Ocal_K$ with $f'(x)\equiv 0\bmod \gp$, there exists at least one value $a\in \Ocal_K$ such that $f(x)-a$ is $\gp$-maximal. (See Corollary \ref{Cor: CritsinK} Condition \ref{MainCor2.} for an equivalent condition.) 
\label{LemPos1.ii.}
\end{enumerate}
\item There exists a positive density subset (with respect to norm) of values  $a\in \Ocal_K$ such that $(f(x),a)$ is dynamically monogenic over $K$. \label{LemPos2.}
\end{enumerate}
\end{lemma}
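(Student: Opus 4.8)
The plan is to prove the two implications separately, with the substance concentrated in showing \ref{LemPos1.} $\Rightarrow$ \ref{LemPos2.}. The direction \ref{LemPos2.} $\Rightarrow$ \ref{LemPos1.} is essentially the observation that the conditions in \ref{LemPos1.} are necessary: if some prime $\gp$ with $f'(x)\not\equiv 0\bmod\gp$ had $\Omega$ meeting every residue class modulo $\gp^2$, then since all critical values lie in $K$ and the postcritical set $\Omega$ is finite, for \emph{every} $a\in\Ocal_K$ there is a critical point $\theta$ and an $n$ with $f^n(\theta)\equiv a\bmod\gp^2$ (one uses that $f^n(\theta)$ runs through $\Omega$), forcing $v_\gp(f^n(\theta)-a)\ge 2$ and hence, by Corollary \ref{Cor: CritsinK} Condition \ref{MainCor1.}, failure of $\gp$-maximality of $f^n(x)-a$ for that $n$; so $(f(x),a)$ is never dynamically monogenic, contradicting positive density. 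Similarly, if for some vanishing prime no $a$ makes $f(x)-a$ $\gp$-maximal, then by Corollary \ref{Cor: CritsinK} Condition \ref{MainCor2.} no $a$ gives dynamical monogenicity. (I should double-check here whether the definition of $\Omega$ as the union of forward orbits, together with $K$-rationality and finiteness, really gives that $f^n(\theta)\bmod\gp^2$ ranges over all of $\Omega\bmod\gp^2$ as $(\theta,n)$ varies — I believe it does, but this is the one place the necessity argument needs care.)

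For the main direction \ref{LemPos1.} $\Rightarrow$ \ref{LemPos2.}, I would invoke Corollary \ref{Cor: CritsinK}: since $f(x)$ is PCF with $K$-rational critical values, no non-vanishing exceptional primes, and (one checks) all exceptional primes are vanishing, dynamical monogenicity of $(f(x),a)$ is equivalent to the two finite checks (i) $v_\gp(f^n(\theta)-a)\le 1$ for all critical $\theta$, all $n$, all non-vanishing $\gp$, and (ii) $v_\gp(f(\tau)-a)\le 1$ for the vanishing primes. Because $f(x)$ is PCF, each sequence $\{f^n(\theta)\}_n$ is eventually periodic and the set $\Omega$ of all values $f^n(\theta)$ is finite; so condition (i) is: for every non-vanishing $\gp$ and every $\omega\in\Omega$, $v_\gp(\omega-a)\le 1$, i.e.\ $a\not\equiv\omega\bmod\gp^2$. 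This is a congruence condition on $a$ modulo $\gp^2$ for each $\gp$, and by hypothesis \ref{LemPos1.i.} there is at least one excluded class — but one must handle infinitely many primes $\gp$ simultaneously.

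The key step is therefore a density computation: I want to show the set of $a\in\Ocal_K$ avoiding $\Omega\bmod\gp^2$ for \emph{all} non-vanishing $\gp$, and lying in appropriate good classes for the finitely many vanishing primes, has positive density. For the finitely many vanishing primes this is a finite set of allowed nonempty congruence classes (nonempty by \ref{LemPos1.ii.}), imposing a positive-density constraint via CRT. For the infinitely many non-vanishing $\gp$: note that $\omega\equiv a\bmod\gp^2$ for some $\omega\in\Omega$ with $\omega\ne a$ forces $\gp^2\mid(\omega-a)$, and since the differences $\omega-\omega'$ of distinct elements of $\Omega$ are finitely many fixed nonzero elements, for all but finitely many $\gp$ at most one $\omega$ can be $\equiv a\bmod\gp$ at all; so for large $\gp$ the bad set of $a$ modulo $\gp^2$ has size at most $|\Omega|\cdot N(\gp)$ out of $N(\gp)^2$, and the probability a random $a$ is bad at $\gp$ is $O(1/N(\gp))$. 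A sieve/Euler-product argument (the relevant sum $\sum_\gp 1/N(\gp)$ diverges, so one needs the \emph{truncated} version: restrict to $\gp$ with $N(\gp)\le$ a growing bound, show the density of survivors stays bounded below, then a limiting/Markov argument) — or, more cleanly, the observation that being bad at $\gp$ for large $\gp$ requires $\gp^2\mid(\omega-a)$ for some $\omega\in\Omega$, and for fixed $a$ this can happen only for $\gp$ dividing one of the finitely many nonzero $\omega-a$, hence only finitely many $\gp$ for each $a$; combined with an inclusion-exclusion over the finitely many "small" primes this yields positive density. Finally I intersect with the density-one set of $a$ for which $(f(x),a)$ is dynamically irreducible (given by hypothesis), which does not destroy positivity, and conclude via Corollary \ref{Cor: CritsinK}. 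The main obstacle is making the infinite-prime density argument rigorous and clean — deciding between a direct "for each $a$, only finitely many bad primes" union bound versus a genuine sieve — and correctly bookkeeping the interaction between the squarefree-type conditions at large primes and the fixed congruence conditions at the exceptional/vanishing primes.
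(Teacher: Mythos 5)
Your proposal is correct and follows essentially the same route as the paper: translate dynamical monogenicity into mod-$\gp^2$ congruence conditions on $a$ via Corollary \ref{Cor: CritsinK}, then show by a Chinese remainder/sieve argument that the density of admissible $a$ is bounded below by $\prod_{\gp}\bigl(1-\lvert\Omega_\gp\rvert/N(\gp)^2\bigr)>0$ (using the uniform bound $\lvert\Omega_\gp\rvert\le\lvert\Omega\rvert$ together with hypothesis \ref{LemPos1.i.}), finally intersecting with the density-one set supplied by the dynamical irreducibility hypothesis. The paper compresses the density step to ``a standard Chinese remainder argument'' while you spell it out; of your two sketched variants, the truncated-sieve-plus-limit formulation is the one to keep, since the observation that each fixed $a$ has only finitely many bad primes does not by itself yield a density bound without the accompanying uniform count of $a$ bad at some large prime.
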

\begin{proof}
The implication \ref{LemPos2.}$\Rightarrow$\ref{LemPos1.} follows directly from Corollary \ref{Cor: CritsinK}, so we assume \ref{LemPos1.}.

For every prime ideal $\mathfrak{p}$ of $\Ocal_K$, let $\Omega_{\mathfrak{p}}$ be the set of all mod-$\mathfrak{p}^2$ reductions of elements of $\Omega$ (excluding those for which the reduction is not defined).
It follows by a standard Chinese remainder argument, that the proportion of $a\in \Ocal_K$ such that $(f(x),a)$ fulfills the conclusion of Corollary \ref{Cor: CritsinK} is bounded below by 
\[\prod_{\mathfrak{p} \text{ vanishing}} \frac{1}{N(\mathfrak{p})^2} \cdot \prod_{\mathfrak{p} \text{ non-vanishing}} \left(1-\frac{|\Omega_{\mathfrak{p}}|}{N(\mathfrak{p})^2}\right).\]  Here, the factor $\prod_{\mathfrak{p}} \frac{1}{N(\mathfrak{p})^2}$ is due to the fact that Condition \ref{MainCor2.} of Corollary \ref{Cor: CritsinK} only depends on the mod-$\gp^2$ residue class of $a$. This factor is positive  since there are only finitely many vanishing primes. Furthermore, by Assumption \ref{LemPos1.i.}, we have $|\Omega_\mathfrak{p}| \le \min\{|\Omega|, N(\mathfrak{p})^2-1\}$ for all non-vanishing primes $\mathfrak{p}$. From this, it follows that $\prod_{\mathfrak{p}} \big(1-\frac{|\Omega_{\mathfrak{p}}|}{N(\mathfrak{p})^2}\big)$ is bounded below by a positive constant factor (arising from the finitely many primes $\mathfrak{p}$ with $N(\mathfrak{p})^2-1 < |\Omega|$) times $\prod_{\mathfrak{p}} \big(1-\frac{|\Omega|}{N(\mathfrak{p})^2}\big)$, the latter product obviously being bounded away from zero. 
In total, the proportion of $a\in \Ocal_K$ such that $(f(x),a)$ fulfills the conclusion of Corollary \ref{Cor: CritsinK} is bounded away from $0$.
\end{proof}




The assumption in Lemma \ref{lem:posdensity} of $(f(x),a)$ being dynamically irreducible for ``most" $a\in \Ocal_K$ is not automatic but can nevertheless be guaranteed in certain key cases.
We collect some results ensuring this property. 
\begin{lemma}
\label{lem:irred}
Let $K$ be a number field, $p$ a prime number, and $f(x)=(f_1\circ\dots\circ f_r)(x)\in K[x]$ be a PCF polynomial such that each $f_i(x)\in K[x]$ is linearly related to $x^p$, i.e., $f_i(x)=(\lambda_i\circ x^p \circ \mu_i)(x)$ with suitable linear polynomials $\lambda_i(x), \mu_i(x)\in K[x]$. 
Then there exists $n_0\in \mathbb{N}$ such that, for all $a\in K$ with $\Gal(f^{n_0}(x)-a/K) \cong \Gal\big(f^{n_0}(x)-t/K(t)\big)$, the pair $(f(x),a)$ is dynamically irreducible. 
\end{lemma}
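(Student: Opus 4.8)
The plan is to reduce dynamical irreducibility to a single ``seed'' irreducibility statement that, once true for some bounded level $n_0$, propagates to all levels. The engine for the propagation is the fact that each $f_i$ is linearly related to $x^p$: composing with such a map multiplies degrees by $p$, and irreducibility of iterates of Kummer-type maps is governed by a classical criterion (in the spirit of Capelli / the theory of $p$-th power polynomials). Concretely, after conjugating by the linear maps, checking that $f^m(x)-a$ is irreducible reduces to checking that certain values along the (finite, because $f$ is PCF) critical orbit data are not $p$-th powers in the relevant field, or in the ramified-prime situation are handled by an Eisenstein-type argument. Because $f$ is PCF, the set of obstructing values one must control is \emph{finite}, so there is a uniform level $n_0$ past which no new obstruction can appear: once $f^{n_0}(x)-a$ is irreducible, the factor $f^{m}(x)-a = f^{n_0}\big(f^{m-n_0}(x)\big)-a$ and the Kummer criterion, applied one $f_i$-layer at a time, gives irreducibility of $f^{m}(x)-a$ for all $m\ge n_0$, hence for all $m\ge 1$ since irreducibility of $f^{m}(x)-a$ implies that of $f^{k}(x)-a$ for $k\le m$.

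First I would fix notation: write $d=\deg f = p^r$, let $\Omega$ be the (finite) union of forward orbits of the critical points of $f$, and let $n_0$ be chosen large enough that the generic Galois group $\Gal\big(f^{n_0}(x)-t/K(t)\big)$ already ``sees'' every critical point, i.e., every element of $\Omega$ occurs as a value $f^{j}(\theta)$ with $j < n_0$ for every critical point $\theta$; finiteness of $\Omega$ makes such an $n_0$ exist. Second, I would record the specialization input: the hypothesis is precisely that $a$ is a ``good'' specialization value, $\Gal(f^{n_0}(x)-a/K)\cong\Gal\big(f^{n_0}(x)-t/K(t)\big)$, so in particular $f^{n_0}(x)-a$ is irreducible over $K$ and, more importantly, the arithmetic of the tower up to level $n_0$ mirrors the geometric generic picture. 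Third, I would prove the propagation step: assuming $f^{m}(x)-a$ irreducible for some $m\ge n_0$, write it as $\big(f_1\circ\cdots\circ f_r\big)\circ f^{m-1}(x)-a$ and peel off the outermost layer, reducing to an irreducibility question for $\lambda_1(x^p\circ\mu_1)\circ(\cdots)-a$; applying the criterion for irreducibility of $g^{p}$-type extensions, the only thing that can fail is that some explicit element — a value attached to a critical point of the inner map, which because of the PCF hypothesis and the choice of $n_0$ already lies among the data controlled at level $n_0$ — becomes a $p$-th power or the wrong local behavior occurs at a ramified prime; the ``good specialization'' hypothesis rules this out. Finally, I would assemble: induction on $m$ gives irreducibility of $f^{m}(x)-a$ for all $m\ge n_0$, and the downward implication (irreducibility of a larger iterate forces irreducibility of all smaller ones, as in the proof of \cite[Proposition 3.4]{SmithWolske}) covers $1\le m < n_0$.

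The main obstacle I anticipate is making the ``peeling off one $f_i$-layer'' step rigorous and genuinely uniform. One must show that the obstruction to irreducibility at level $m$ — typically the condition that a certain resolvent value or critical-orbit value fails to be a $p$-th power in a field attached to level $m-1$ — is actually controlled by level-$n_0$ data, and this is exactly where post-critical finiteness is doing the work: the forward orbits of the critical points of every inner composite $f_i\circ\cdots\circ f_r$ are also finite (they are subsets of $\Omega$ up to the action of the linear maps), so only finitely many values ever need to be checked, and the generic Galois image at level $n_0$ certifies all of them simultaneously. A secondary technical point is the behavior at primes of ramification (where $x^p$ is inseparable mod $\gp$, i.e., the vanishing primes): there the naive $p$-th-power criterion must be replaced by an Eisenstein/Newton-polygon argument, but since there are only finitely many such primes this contributes only finitely many conditions, again absorbed into the choice of $n_0$. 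The bookkeeping to fuse the Kummer criterion over the several distinct linear layers $\lambda_i,\mu_i$ into one clean inductive statement is where most of the care will be needed, but it is routine in structure rather than conceptually hard.
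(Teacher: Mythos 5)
There is a genuine gap, and it is precisely at the step you yourself flag as ``routine in structure rather than conceptually hard.'' Your proposed mechanism is a Capelli-type peeling argument: irreducibility of $f^{m+1}(x)-a$ reduces to checking that certain explicit elements of $K(\alpha_m)$ are not $p$-th powers, and you claim that the PCF hypothesis plus the ``good specialization'' hypothesis at level $n_0$ control all of these. But the $p$-th-power obstruction at level $m$ lives in the \emph{field} $K(\alpha_m)$, which grows with $m$, and neither PCF-ness nor the Galois isomorphism at a single finite level $n_0$ gives you any direct handle on whether a given element of $K(\alpha_m)$ is a $p$-th power there. (By contrast, in the proof of Theorem~\ref{thm:unicrit} the paper \emph{does} carry out a Capelli argument, but only because it has the much stronger running hypothesis that the conditions of Corollary~\ref{Cor: CritsinK} hold --- in particular, that $\mathcal{O}_{K_n}=\mathcal{O}_K[\alpha_n]$, which lets one do mod-$\gp$ polynomial-ring arithmetic. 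Lemma~\ref{lem:irred} has no such hypothesis.) Your choice of $n_0$ (the length of the post-critical orbit) reinforces that this is the wrong mechanism: the correct $n_0$ is not governed by the orbit length at all.

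The actual argument is group-theoretic and hinges on an idea your proposal never invokes: nilpotence of the generic monodromy group and a Frattini argument. Because each $f_i$ is linearly related to $x^p$, the monodromy group $\Gal(f(x)-t/K(\zeta_p)(t))$ embeds into the iterated wreath product $C_p\wr\cdots\wr C_p$ and is therefore a $p$-group. The generic Galois groups $U_{n,t}$ over $K(\zeta_p)(t)$ are then $p$-groups generated by a \emph{bounded} number of elements (this is where PCF-ness enters, via the finite branch locus and Schreier's lemma), so their elementary-abelian Frattini quotients stabilize at some finite $n_0$. Past that point, any subgroup of $U_{n,t}$ surjecting onto $U_{n_0,t}$ is forced to equal $U_{n,t}$; the hypothesis that $\Gal(f^{n_0}(x)-a/K)$ equals the generic group then propagates transitivity --- hence irreducibility --- to all levels. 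The paper packages exactly this as a citation to \cite[Theorem 1.3]{benedetto2023}. Your proposal has the right supporting observations (PCF gives finiteness, compositions with $x^p$-like maps are special) but misses the nilpotence/Frattini mechanism, and the Capelli route you sketch does not survive the transition to the growing ground fields $K(\alpha_m)$.
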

\begin{proof}
For any field $K\subset F\subset \overline{K}$, let $\textrm{Mon}_{f,F}:=\textrm{Gal}(f(x)-t/F(t))$ denote the monodromy group of $f(x)$ over $F$. It is well-known that $\textrm{Mon}_{f,F}$ embeds as a permutation group into the iterated wreath product $\Mon_{f_r,F}\circ \dots \circ \Mon_{f_1,F}$. Furthermore, for $F=K(\zeta_p)$ the $p$-th cyclotomic field, one clearly has $\Mon_{f_i,F}\cong C_p$, so that in particular $\Mon_{f,F}\le C_p\wr\dots\wr C_p$ is a $p$-group. The assertion is now a straightforward consequence of \cite[Theorem 1.3]{benedetto2023}.
\end{proof}

\begin{remark}
\label{rem:irred}
Note that the assumption on the Galois group of $f^{n_0}(x)-a$ is fulfilled for all but a density zero set of integral values $a\in \Ocal_K$, by a sufficiently strong version of Hilbert's irreducibility theorem, see, e.g., \cite[Theorem 1]{Cohen1979}. 
\end{remark}

The following lemma collects a number of relatively well-known statements on the dynamics of Eisenstein polynomials. It is also useful in ensuring dynamical irreducibility, and eventually dynamical monogenicity. 

\begin{lemma}\label{lem:eisen_dyn}
Let $K$ be a number field and $\gp$ a prime of $K$. If a polynomial $f(x)\in K[x]$ is $\gp$-Eisenstein, then it is $\gp$-maximal. Moreover, the extension generated by $f(x)$ is totally ramified at $\gp$. Conversely, if an extension $L/K$ is totally ramified at $\gp$, then $L/K$ is generated by a polynomial that is $\gp$-Eisenstein. Finally, if $f(x)$ is $\gp$-Eisenstein and has degree greater than 1, then $f^n(x)$ is $\gp$-Eisenstein for all $n\geq 1$.
\end{lemma}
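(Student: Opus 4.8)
The plan is to prove the four assertions in the order they are stated, each being a standard fact about Eisenstein polynomials over a Dedekind domain localized at $\gp$. Throughout I work in $\Ocal_\gp$ with uniformizer $\pi_\gp$, which is legitimate by Remark \ref{Remark: pmaximalandlocal}.

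\textbf{$\gp$-Eisenstein implies $\gp$-maximal.} Write $f(x) = x^d + c_{d-1}x^{d-1} + \dots + c_0$ with $v_\gp(c_i) \geq 1$ for all $i$ and $v_\gp(c_0) = 1$. I would apply the restatement of Uchida's criterion (Theorem \ref{Thm: UchidaRestatement}). Reducing mod $\gp$ gives $f(x) \equiv x^d \bmod \gp$, so the only candidate repeated factor $\phi(x)$ to check is $\phi(x) = x$. Suppose for contradiction $x \mid f(x)$ in $\Ocal_\gp/\pi_\gp^2[x]$, i.e. $f(x) = x \cdot q(x) + \pi_\gp^2 r(x)$ for some $q, r \in \Ocal_\gp[x]$. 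Evaluating at $0$ gives $c_0 = f(0) \in \pi_\gp^2 \Ocal_\gp$, contradicting $v_\gp(c_0) = 1$. Hence $f(x)$ is $\gp$-maximal.

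\textbf{Total ramification.} Let $\alpha$ be a root of $f(x)$ and $L = K(\alpha)$; since $f$ is $\gp$-maximal, $\Ocal_\gp[\alpha] = \Ocal_L \otimes_{\Ocal_K}\Ocal_\gp$, so it suffices to read off the factorization of $\gp$ from $f$. The Newton polygon of $f$ at $\gp$ is a single segment from $(0,1)$ to $(d,0)$ of slope $-1/d$, which is in lowest terms; hence $\gp$ is totally ramified in $L$, with $\alpha$ a uniformizer at the unique prime $\gP$ above $\gp$ and $v_\gP(\alpha) = 1$, $e(\gP\mid\gp) = d$. (Alternatively one argues directly: $v_\gP(\alpha^d) = v_\gP(-(c_{d-1}\alpha^{d-1}+\dots+c_0))$, and comparing valuations forces $d \cdot v_\gP(\alpha) = e(\gP\mid\gp)$, so $e(\gP\mid\gp)\geq d$, whence equality.)

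\textbf{Converse.} If $L/K$ is totally ramified at $\gp$, let $\gP$ be the prime of $L$ above $\gp$ and pick $\beta \in \Ocal_L$ with $v_\gP(\beta) = 1$ (a uniformizer at $\gP$). Then $L = K(\beta)$ since $[K(\beta):K] \geq e(\gP\mid\gp) = [L:K]$. Its minimal polynomial $g(x) = x^d + c_{d-1}x^{d-1} + \dots + c_0$ over $K$ has coefficients in $\Ocal_K$ (after possibly localizing, which does not change minimal polynomials), and I would check the Eisenstein conditions via valuations: each $c_i$ is, up to sign, the $i$-th elementary symmetric function of the conjugates of $\beta$, all of which have $\gP'$-valuation $1$ in their respective conjugate fields; a short Newton-polygon / valuation argument gives $v_\gp(c_i) \geq 1$ for $i < d$ and $v_\gp(c_0) = v_\gp(\Norm_{L/K}(\beta)) = 1$ since the norm of a uniformizer generates $\gp$ when the extension is totally ramified. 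Hence $g(x)$ is $\gp$-Eisenstein.

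\textbf{Iterates.} Finally, suppose $f$ is $\gp$-Eisenstein of degree $d > 1$. Since $f(x) \equiv x^d \bmod \gp$, we get $f^n(x) \equiv x^{d^n} \bmod \gp$ by induction, so all non-constant coefficients of $f^n$ lie in $\gp$. For the constant term, $f^n(0) = f(f^{n-1}(0))$; writing $f(x) = x\cdot(\dots) + c_0$ and using $v_\gp(f^{n-1}(0)) \geq 1$ (induction) together with $v_\gp(c_0) = 1$, one sees $v_\gp(f^n(0)) = 1$ exactly — the term $c_0$ has valuation $1$ and every other term is divisible by $f^{n-1}(0)$, hence by $\gp$, and in fact by $\gp^2$ once we note each such term carries a factor $f^{n-1}(0)$ with a coefficient in $\Ocal_\gp$ and when the term is $c_1 f^{n-1}(0)$ both factors contribute valuation $\geq 1$. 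So $v_\gp(f^n(0)) = 1$ and $f^n$ is $\gp$-Eisenstein.

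\textbf{Main obstacle.} None of these steps is deep; the only one requiring a little care is the converse (that total ramification yields an Eisenstein generator), where one must be slightly careful that the chosen uniformizer generates $L$ over $K$ and that its minimal polynomial genuinely satisfies the Eisenstein valuation conditions — this is the classical characterization of totally ramified extensions via Eisenstein polynomials, and the cleanest route is through Newton polygons of the minimal polynomial rather than direct manipulation of symmetric functions.
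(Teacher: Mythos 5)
Your proof is correct and follows essentially the same route as the paper's: Uchida's criterion (observing $x \nmid f(x)$ mod $\gp^2$) for $\gp$-maximality, the one-sided Newton polygon of slope $-1/d$ for total ramification, the standard uniformizer argument for the converse, and a direct induction on coefficient valuations for iterates. The paper cites Odoni for the iterate claim but explicitly notes that the direct valuation argument you give is available, so there is no meaningful divergence.
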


We include a brief proof for completeness.

\begin{proof}
For the first claim, suppose $f(x)$ has degree $d$. Note that $f(x)\equiv x^d\bmod \gp$. However, $x$ does not divide $f(x)$ modulo $\gp^2$. Thus $f(x)$ is $\gp$-maximal. Alternatively, one can use the Montes algorithm/Ore's theorem to see that the principal $x$-polygon is one-sided with slope $-\frac{1}{d}$. This also shows that $\gp$ has ramification index $d$. The fact that iterates of a $\gp$-Eisenstein polynomial are $\gp$-Eisenstein is implied by Lemma 2.3 of \cite{Odoni}, though a direct proof via analysis of the valuations of the coefficients of $f^n(x)$ is possible.

To see that an extension totally ramified over $\gp$ is given by a $\gp$-Eisenstein polynomial, suppose we are in that setup, and let $\gP$ denote the unique prime of $\Ocal_L$ above $\gp$. Let $\pi_\gP\in\Ocal_L$ be an element with $\gP$-adic valuation 1. We see that $K\big(\pi_\gP\big)=L$. Moreover, the minimal polynomial of $\pi_\gP$ over $K$ is $\gp$-Eisenstein. This can be generalized a good deal; the interested reader should consult Proposition 12 in $\S$6 of Chapter III of \cite{SerreLocal}.
\end{proof}


Combining Lemma \ref{lem:eisen_dyn} with Theorem \ref{Thm: MainRelativeOrbits}, we obtain a sufficient criterion for monogenicity of the {\it fields} generated by roots of polynomials $f^{n}(x)-a$, for all $n\in \mathbb{N}$. First, we need a lemma that classifies when units are monogenerators.


\begin{lemma}
\label{lem:inverses}
    Let $R$ be a Dedekind domain, and let $\alpha$ be a root of an irreducible polynomial $f(x)=c_nx^n+c_{n-1}x^{n-1}+\cdots +c_1x+c_0\in R[x]$ with $c_n\in R^\times$. Let $K$ be the field of fractions of $R$, and let $L=K(\alpha)$. Write $\Ocal_L$ for the ring of elements of $L$ which are integral over $R$. The following are equivalent:
    \begin{enumerate}
        \item\label{1} $\alpha\in \Ocal_L^\times$.
        \item\label{2}  $c_0\in R^\times$.
        \item\label{3} $\alpha^{-1}\in R[\alpha]$.
        \item\label{4} $R[\alpha]=R[\alpha^{-1}]$.
    \end{enumerate}
\end{lemma}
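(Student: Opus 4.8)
The plan is to prove the chain of equivalences by going around a cycle, say $(\ref{2})\Rightarrow(\ref{1})\Rightarrow(\ref{3})\Rightarrow(\ref{4})\Rightarrow(\ref{2})$, using the explicit defining polynomial throughout. The key computational input is that, since $c_n\in R^\times$, we may as well divide through and assume $f$ is monic of degree $n$; then $c_0=(-1)^n N_{L/K}(\alpha)$ up to a unit, which already makes the equivalence $(\ref{1})\Leftrightarrow(\ref{2})$ plausible, but I would rather argue directly to keep things over a general Dedekind domain.

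First I would show $(\ref{2})\Rightarrow(\ref{3})$: from $c_n\alpha^n+\cdots+c_1\alpha+c_0=0$ we get
\[
\alpha\cdot(c_n\alpha^{n-1}+\cdots+c_1) = -c_0,
\]
so $\alpha^{-1} = -c_0^{-1}(c_n\alpha^{n-1}+\cdots+c_1)\in R[\alpha]$ once $c_0\in R^\times$. Then $(\ref{3})\Rightarrow(\ref{4})$ is immediate: $\alpha^{-1}\in R[\alpha]$ gives $R[\alpha^{-1}]\subseteq R[\alpha]$, and symmetrically $\alpha = (\alpha^{-1})^{-1}$, whose minimal polynomial is the "reversed" polynomial $c_0 x^n + c_1 x^{n-1}+\cdots+c_n$ with leading coefficient $c_0$ — but to invoke symmetry I first need $c_0$ to be a unit, which I do not yet have at that point in the cycle. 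So instead I would deduce $R[\alpha]\subseteq R[\alpha^{-1}]$ directly: multiply the defining relation by $\alpha^{-(n-1)}$ to write $c_n\alpha = -(c_{n-1}+c_{n-2}\alpha^{-1}+\cdots+c_0\alpha^{-(n-1)})$, hence $\alpha\in R[\alpha^{-1}]$ since $c_n\in R^\times$; therefore $R[\alpha]=R[\alpha^{-1}]$.

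Next, $(\ref{4})\Rightarrow(\ref{1})$: if $R[\alpha]=R[\alpha^{-1}]$ then $\alpha^{-1}\in R[\alpha]\subseteq\Ocal_L$, and since also $\alpha\in\Ocal_L$, we get $\alpha\in\Ocal_L^\times$. Finally $(\ref{1})\Rightarrow(\ref{2})$: this is where I would use the norm. Since $\alpha\in\Ocal_L^\times$, we have $N_{L/K}(\alpha)\in\Ocal_K^\times\cap K = $ ... more carefully, $N_{L/K}(\alpha)$ is a unit in the integral closure of $R$ in $K$, which is $R$ itself as $R$ is integrally closed (Dedekind), so $N_{L/K}(\alpha)\in R^\times$. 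After normalizing $f$ to be monic, $N_{L/K}(\alpha)=(-1)^n c_0$ (with the normalized $c_0$), giving $c_0\in R^\times$; unwinding the normalization, the original $c_0$ differs from this by the unit $c_n$, so the original $c_0\in R^\times$ as well.

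The main obstacle I anticipate is purely bookkeeping: making sure the "reversal" argument ($\alpha\leftrightarrow\alpha^{-1}$ swaps the roles of $c_0$ and $c_n$) is deployed only after one knows the relevant coefficient is a unit, so as not to argue in a circle, and handling the non-monic-to-monic normalization cleanly so that the statement as given (with general unit leading coefficient $c_n$) is what gets proved. An alternative, perhaps cleaner, route avoiding the normalization is to keep everything in terms of the relation $c_n\alpha^n+\cdots+c_0=0$ and note that $\alpha/c_n^{?}$ adjustments are unnecessary if one works with $R[\alpha]$ directly as above; I would present whichever is shorter in the write-up, but the cycle of four implications is the backbone either way.
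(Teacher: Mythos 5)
Your proof is correct and follows the same cycle of implications as the paper, namely $(\ref{2})\Rightarrow(\ref{3})\Rightarrow(\ref{4})\Rightarrow(\ref{1})\Rightarrow(\ref{2})$ (which is the paper's $(\ref{1})\Rightarrow(\ref{2})\Rightarrow(\ref{3})\Rightarrow(\ref{4})\Rightarrow(\ref{1})$ read from a different starting point). One small but genuine improvement: for $(\ref{3})\Rightarrow(\ref{4})$, you observe that the containment $R[\alpha]\subseteq R[\alpha^{-1}]$ holds \emph{unconditionally} once $c_n\in R^\times$, since multiplying the defining relation by $\alpha^{-(n-1)}$ expresses $c_n\alpha$ as an $R$-linear combination of $1,\alpha^{-1},\dots,\alpha^{-(n-1)}$; so $(\ref{3})$ gives the reverse containment and we are done. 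The paper instead assembles a monic polynomial satisfied by $\alpha^{-1}$, argues it is the minimal polynomial, and re-invokes the $(\ref{1})\Rightarrow(\ref{2})$ norm argument to see its constant term $b_{n-1}$ is a unit before solving for $\alpha$ — a more roundabout route that your version bypasses. (Minor slip: your opening sentence announces the cycle as $(\ref{2})\Rightarrow(\ref{1})\Rightarrow(\ref{3})\Rightarrow(\ref{4})\Rightarrow(\ref{2})$ but what you actually prove is $(\ref{2})\Rightarrow(\ref{3})\Rightarrow(\ref{4})\Rightarrow(\ref{1})\Rightarrow(\ref{2})$; the latter is correct.)
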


\begin{proof}
For \eqref{1}$\implies$\eqref{2}, note that if $\alpha\in\Ocal_L^\times$, then there exists $\alpha^{-1}\in\Ocal_L^\times$ so that $\alpha\cdot \alpha^{-1}=1$. Taking norms we have $\Norm_{L/K}(\alpha)\Norm_{L/K}(\alpha^{-1})=1$. Thus $c_0\in R^\times$. $\checkmark$

For \eqref{2}$\implies$\eqref{3}, we have $c_n\alpha^n+\cdots +c_1\alpha+c_0=0$, so 
\[1=-c_0^{-1}c_n\alpha^n-\cdots -c_0^{-1}c_1\alpha.\ \text{ Thus } \ \alpha\inv = -c_0^{-1}c_n\alpha^{n-1}-\cdots -c_0^{-1}c_1. \ \checkmark\]

For \eqref{3}$\implies$\eqref{4}, we have $\alpha\inv = b_{n-1}\alpha^{n-1}+\cdots +b_1\alpha+b_0$ for some $b_i\in R$. Multiplying by $\alpha^{1-n}$ and subtracting, we obtain
\[\alpha^{-n}-b_0\alpha^{1-n}-b_1\alpha^{2-n}-\cdots -b_{n-3}\alpha^{-2}-b_{n-2}\alpha^{-1}-b_{n-1}=0\]
Since $\alpha^{-1}$ is a unit and since it generates the same extension field as $\alpha$, we see that $x^n-b_0x^{n-1}-\cdots -b_{n-2}x-b_{n-1}$ is the minimal polynomial. \eqref{1}$\implies$\eqref{2} shows that $b_{n-1}\in R^\times$. Thus 
\[b_{n-1}^{-1}\alpha^{1-n}-b_{n-1}^{-1}b_0\alpha^{2-n}-b_{n-1}^{-1}b_1\alpha^{3-n}-\cdots -b_{n-1}^{-1}b_{n-3}\alpha^{-1}-b_{n-1}^{-1}b_{n-2}=\alpha\]
is an $R$-linear expression of $\alpha$ in powers of $\alpha\inv$. Hence $R[\alpha]=R[\alpha^{-1}]$. $\checkmark$

Finally, \eqref{4}$\implies$\eqref{1} is clear since $R[\alpha]=R[\alpha^{-1}]\subset \Ocal_L$. $\checkmark$
\end{proof}


\begin{theorem}
\label{thm:eisen_dynamic}
Let $K$ be a number field. Assume that $f(x)\in \Ocal_K[x]$ is PCF with leading coefficient a unit, and assume that there exists some $\gamma\in \Ocal_K$ which is periodic under $f(x)$. 
Furthermore, assume that all exceptional primes for $f(x)$ are vanishing. 
Given a non-unit $b\in \Ocal_K$, let $K\subset K_1\subset K_2\subset\dots$ be a chain of fields such that $K_n$ is generated over $K$ by a root of $f^n(x)-\frac{1+b\gamma}{b}$.
For any non-vanishing prime $\gp\subset \Ocal_K$ of $f(x)$, let $\gP$ denote a prime extending $\gp$ in the splitting field of $f'(x)$. For any vanishing prime $\gp$ of $f(x)$ not dividing $b$, fix $g(x)\in K[x]$ such that $f(x)-\frac{1+b\gamma}{b}\equiv g(x)^p$ mod $\gp$ (with $p$ the residue characteristic of $\gp$), and let $\gP$ denote a prime extending $\gp$ in the splitting field of $g(x)$.  
Assume that all of the following are fulfilled:
\begin{enumerate}[label=\textbf{\roman*.}, ref=\roman*]
\item $v_{\gp}(b)\le 1$ for all primes $\gp$ of $\Ocal_K$. \label{Eiseni.}
\item For every critical point $\theta$ of $f(x)$, every $k\in \mathbb{N}$, and every prime $\gp$ of $\Ocal_K$ not dividing $b$ which is a non-vanishing prime for $f(x)$, one has $v_{\gP}\big(1+(\gamma-f^k(\theta))b\big)\le e(\gP\mid\gp)$. \label{Eisenii.} 
\item For every prime $\gp$ of $\Ocal_K$ not dividing $b$ which is a vanishing prime for $f(x)$, and for any root $\tau$ of $g(x)$, one has $v_{\gP}\big(f(\tau)-\frac{1+b\gamma}{b}\big)\le e(\gP\mid \gp)$. 
\label{Eiseniii.}
\end{enumerate}
Then $K_n$ is  monogenic over $K$ for all $n\in \mathbb{N}$.
\end{theorem}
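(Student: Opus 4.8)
The plan is to exhibit an explicit monogenerator of $\Ocal_{K_n}$ over $\Ocal_K$ and check its defining polynomial prime by prime, invoking Theorem \ref{Thm: MainRelativeOrbits} at primes not dividing $b$ and Lemma \ref{lem:eisen_dyn} at the finitely many primes dividing $b$. Write $a:=\frac{1+b\gamma}{b}=\gamma+\frac1b$ and fix a compatible system of roots $\alpha_n$ of $f^n(x)-a$ with $f(\alpha_n)=\alpha_{n-1}$, so that $K_n=K(\alpha_n)\supseteq K_{n-1}$ and $f^n(\alpha_n)=a$. Since $\gamma$ lies on a finite cycle of $f$, for each $n$ there is a unique point $\gamma^{(n)}$ of that cycle with $f^n(\gamma^{(n)})=\gamma$, and $\gamma^{(n)}\in\Ocal_K$ (it is one of $\gamma,f(\gamma),\dots$); when $\gamma$ is a fixed point, $\gamma^{(n)}=\gamma$ for all $n$. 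I would take the monogenerator to be $\beta_n:=1/(\alpha_n-\gamma^{(n)})$, which plainly generates $K_n$ over $K$; shifting by $\gamma^{(n)}$ rather than by $\gamma$ is precisely what makes the constant term of the minimal polynomial of $\alpha_n-\gamma^{(n)}$ a unit times $f^n(\gamma^{(n)})-a=-1/b$. Since $\gp$-maximality is a local property (Remark \ref{Remark: pmaximalandlocal}), it suffices to check $\Ocal_\gp[\beta_n]=(\Ocal_{K_n})_\gp$ for each prime $\gp$ of $\Ocal_K$, and I would split the argument according to whether $\gp\mid b$.

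For $\gp\mid b$, substitute $x=\gamma^{(n)}+1/y$ into $f^n(x)=a$ and clear denominators: $\beta_n$ is a root of $bG(y)-(1+b\gamma)y^{d^n}$, where $d=\deg f$ and $G(y):=y^{d^n}f^n(\gamma^{(n)}+1/y)\in\Ocal_K[y]$ has leading coefficient $f^n(\gamma^{(n)})=\gamma$ and constant coefficient the (unit) leading coefficient of $f^n$. Hence this polynomial has leading coefficient $-1$, so, after negating, $\beta_n$ has monic minimal polynomial $H_n(y)=y^{d^n}-bG_0(y)\in\Ocal_K[y]$ with $\deg G_0<d^n$; all non-leading coefficients of $H_n$ lie in $b\Ocal_K\subseteq\gp$, and its constant term has $\gp$-valuation $v_\gp(b)=1$ by hypothesis \ref{Eiseni.}. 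Thus $H_n$ is $\gp$-Eisenstein, so by Lemma \ref{lem:eisen_dyn} it is $\gp$-maximal and $K_n/K$ is totally ramified at $\gp$; in particular $\Ocal_\gp[\beta_n]=(\Ocal_{K_n})_\gp$. Because $b$ is a non-unit such a $\gp$ exists, so $H_n$---and hence $f^n(x)-a$, to which it is related by the above birational substitution---is irreducible over $K$; this makes $K_n$ well defined and supplies the irreducibility hypothesis needed below.

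For $\gp\nmid b$ we have $a\in\Ocal_\gp$, so $f^n(x)-a\in\Ocal_\gp[x]$ has unit leading coefficient and Theorem \ref{Thm: MainRelativeOrbits} applies over $\Ocal_\gp$ (cf.\ the discussion after Remark \ref{Remark: pmaximalandlocal} of polynomials $\Ocal_\gp$-integral but not $\Ocal_K$-integral). If $\gp$ is non-vanishing for $f(x)$ it is non-exceptional (all exceptional primes being vanishing), and part \ref{Main1b.} reduces $\gp$-maximality of every $f^n(x)-a$ to $v_\gP(f^n(\theta)-a)\le e(\gP\mid\gp)$ for all critical $\theta$ and all $n$; since $b$ is a $\gP$-unit, $v_\gP(f^n(\theta)-a)=v_\gP\big(1+(\gamma-f^n(\theta))b\big)$, which is bounded by hypothesis \ref{Eisenii.}. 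If $\gp$ is vanishing, part \ref{Main2.} reduces it to $v_\gP(f(\tau)-a)\le e(\gP\mid\gp)$ for the roots $\tau$ of the relevant $g$ with $f(x)-a\equiv g(x)^p\bmod\gp$, which is hypothesis \ref{Eiseniii.}. Either way $\Ocal_\gp[\alpha_n]=(\Ocal_{K_n})_\gp$, and since the monic minimal polynomial of $\alpha_n-\gamma^{(n)}$ over $\Ocal_\gp$ has constant term a unit times the $\gp$-unit $-1/b$, Lemma \ref{lem:inverses} shows $\alpha_n-\gamma^{(n)}$ is a unit of $(\Ocal_{K_n})_\gp$ and $\Ocal_\gp[\beta_n]=\Ocal_\gp[\alpha_n-\gamma^{(n)}]=\Ocal_\gp[\alpha_n]=(\Ocal_{K_n})_\gp$. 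Combining both cases over all $\gp$ gives $\Ocal_K[\beta_n]=\Ocal_{K_n}$, i.e., $K_n$ is monogenic over $K$ for every $n$.

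The main obstacle I expect is the $\gp\mid b$ step: pinning down the correct monogenerator---the reciprocal of $\alpha_n$ shifted by the right orbit point $\gamma^{(n)}$, which keeps the transformed constant term equal to $-1/b$ even when $\gamma$ is genuinely periodic rather than merely fixed---and then the coefficient bookkeeping that exhibits the reciprocally substituted polynomial as $\gp$-Eisenstein. A secondary point is that the change of variables $x\mapsto\gamma^{(n)}+1/y$ interacts well with $\gp$-integrality only in the regime in which it is applied, which is why the primes dividing and not dividing $b$ must be handled by genuinely different arguments---an Eisenstein argument on one side, the critical point criteria together with Lemma \ref{lem:inverses} on the other.
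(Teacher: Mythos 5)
Your proof is correct and essentially mirrors the paper's: you exhibit the same monogenerator $\beta_n = 1/(\alpha_n - c_n)$, where $c_n := \gamma^{(n)}$ is the orbit point with $f^n(c_n)=\gamma$, establish $\gp$-maximality via the same Eisenstein argument at primes dividing $b$ (which simultaneously yields irreducibility), and invoke Theorem \ref{Thm: MainRelativeOrbits} together with Lemma \ref{lem:inverses} at primes not dividing $b$. The only cosmetic difference is that the paper conjugates $f$ by $x\mapsto x+c_n$ to form a pair $(\widehat{f},\widehat{a})$ before applying Theorem \ref{Thm: MainRelativeOrbits}, whereas you apply it to $(f,a)$ directly over $\Ocal_\gp$ and shift afterwards; these are equivalent.
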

\begin{proof}
For $n\in \mathbb{N}$, let $\alpha_n$ be a root of $f^n(x)-\frac{1+b\gamma}{b}$, let $c_n\in \Ocal_K$ be such that $f^n(c_n)=\gamma$ (note that such $c_n$ exists due to $\gamma$ being periodic), and let $\beta_n:=\frac{1}{\alpha_n-c_n}$. In particular, $\beta_n$ is a root of the polynomial $h_n(x):=\big(-bf^n(\frac{1}{x}+c_n) + 1+b\gamma\big)\cdot x^{\deg(f)^n}$ (i.e., the reciprocal of the polynomial $-bf^n(x+c_n) + 1+b\gamma$). We will show that, under the conditions of the theorem, $\Ocal_{K_n}=\Ocal_K[\beta_n]$ for all $n\in \mathbb{N}$.

First, we  make sure that $h_n(x)$ is indeed monic. This amounts to verifying that the constant coefficient of $-bf^n(x+c_n) + 1+b\gamma$ equals $1$, which follows directly from the fact that $f^{n}(c_n)=\gamma$. Next, since the leading coefficient of $f(x)$ is a unit and $v_{\gp}(b)\le 1$, we see that $h_n(x)$ is a $\gp$-Eisenstein polynomial, and hence $\gp$-maximal for all primes $\gp\mid b$ by Lemma \ref{lem:eisen_dyn}. Since $b$ is a non-unit, there is at least one such prime $\gp$ with $v_\gp(b)=1$. In particular, $h_n(x)$ is irreducible.

    Next, pick a prime $\gp$ not dividing $b$ which is a non-vanishing prime for $f(x)$, and consider the polynomial $g_n(x):=f^{n}(x+c_n)-\frac{1+b\gamma}{b}\in \Ocal_{K_\gp}[x]$. 
    Note that $g_n(x) = \widehat{f}^n(x) - \widehat{a}$, where $\widehat{f}(x):=(x-c_n)\circ f\circ (x+c_n)$ and $\widehat{a}:=\frac{1+b\gamma}{b}-c_n$. 
    The critical points of $\widehat{f}(x)$ are exactly the values $\widehat{\theta}:=\theta-c_n$, where $\theta$ runs through the critical points of $f(x)$. In particular, if $\gp$ is not a vanishing prime of $f(x)$, then Assumption \ref{Eisenii.} guarantees that, for every critical point $\theta$ of $f(x)$ and every $k\in \mathbb{N}$,
\[v_{\gP}\left(f^k\left(\widehat{\theta}\right)-\widehat{a}\right) = v_{\gP}\left(f^k(\theta)-\frac{1+b\gamma}{b}\right) = v_{\gP}\left(-\frac{1}{b}
\Big(1+\big(\gamma-f^k(\theta)\big)b\Big)\right)\le e(\gP\mid \gp) .\] 
    Thus Condition \ref{Main1b.} of Theorem \ref{Thm: MainRelativeOrbits} applied with the pair $(\widehat{f}, \widehat{a})$\footnote{Technically, since $\widehat{a}\notin \Ocal_K$, we apply Theorem \ref{Thm: MainRelativeOrbits} with $\Ocal_K$ replaced by its localization at $\gp$, but see Remark \ref{Remark: pmaximalandlocal} justifying this.} shows that $g_n(x)$ is $\gp$-maximal.
   
    Since $g_n(x)$ has constant coefficient $f^n(c_n)-\frac{1+b\gamma}{b}=-\frac{1}{b}$,  which is a unit in $\Ocal_{K_\gp}$, it follows from Lemma \ref{lem:inverses} that  $\gp\nmid  \big[\Ocal_{K_n}:\Ocal_K[\beta_n]\big]$.

    Similarly, if $\gp$ is a vanishing prime not dividing $b$, then, due to Assumption \ref{Eiseniii.}, it follows from Theorem \ref{Thm: MainRelativeOrbits} that $g_n(x)\in \Ocal_{K_\gp}[x]$ is $\gp$-maximal and hence again $\gp\nmid \big[\Ocal_{K_n}:\Ocal_K[\beta_n]\big]$.

    In total, $\Ocal_K[\beta_n]$ is $\gp$-maximal for {\it all} primes $\gp$, i.e., $\beta_n$ is a monogenerator of $\Ocal_{K_n}$ over $\Ocal_K$.
\end{proof}


\section{Applications to specific families of polynomials}\label{sec:appl}

\subsection{Unicritical polynomials}
Arguably the most obvious family of PCF polynomials is given by the monomials $x^d$ (which are unicritical and critically fixed). Investigation of dynamical monogenicity for pairs $(x^d,a)$ with $a \in \ZZ$ has been carried out in \cite{LJones2021} (e.g., Corollary 1). Here, we generalize this widely to unicritical PCF polynomials $f(x)=ux^d+b \in \Ocal_K[x]$. 
Our key observation is that, for polynomials of this shape, dynamical irreducibility follows automatically from the two conditions of Corollary \ref{Cor: CritsinK} and thus does not have to be assumed additionally.
\begin{theorem}
\label{thm:unicrit}
Let $K$ be a number field, $a\in \Ocal_K$, and $f(x)=ux^d+b\in \Ocal_K[x]$ where $u$ is a unit. Then $(f(x),a)$ is dynamically monogenic (and in particular, dynamically irreducible) if and only if both of the following hold:
\begin{enumerate}[label=\textbf{\arabic*.}, ref=\arabic*]
\item For each $n\in \mathbb{N}$ and each prime $\gp$ of $\Ocal_K$ not dividing $d$, one has $v_\gp(f^{n}(0)-a)\le 1$. \label{UniCrit1.}
\item For each prime $\gp$ of $\Ocal_K$ of residue characteristic $p$ dividing $d$, the ratio $\frac{a-b}{u}$ is not congruent to any $p$-th power modulo $\gp^2$. \label{UniCrit2.}
\end{enumerate}
\end{theorem}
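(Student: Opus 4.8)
The plan is to reduce Theorem~\ref{thm:unicrit} to Corollary~\ref{Cor: CritsinK} by verifying its hypotheses, and then to show that the two conditions listed here are exactly the conditions \ref{MainCor1.} and \ref{MainCor2.} of that corollary, \emph{plus} that dynamical irreducibility comes for free. First I would record the basic structure: $f(x)=ux^d+b$ is unicritical with unique critical point $\theta=0$, whose critical value $f(0)=b\in\Ocal_K$ is $K$-rational, so the hypothesis of Corollary~\ref{Cor: CritsinK} on $K$-rational critical values holds trivially. Next I would identify the vanishing primes: $f'(x)=dux^{d-1}$, so $f'(x)\equiv 0\bmod\gp$ precisely when $\gp\mid d$ (as $u$ is a unit), and for such $\gp$ of residue characteristic $p$ one has $f(x)-a\equiv u x^d + (b-a)\bmod\gp$; since $p\mid d$, writing $d=p^k m$ with $p\nmid m$ and extracting $p$-th powers in $k_\gp[x]$ gives $f(x)-a\equiv g(x)^p\bmod\gp$ for a suitable $g(x)\in\Ocal_K[x]$, and every root $\tau$ of $g(x)$ satisfies $\tau^{d}\equiv \tau'$ for the appropriate power and $f(\tau)=u\tau^d+b$; one checks $f(\tau)\in\Ocal_K$ so long as one is slightly careful (in fact $g$ can be taken so that $\tau^{d/p}\in\Ocal_K$, giving $f(\tau)\in\Ocal_K$). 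This confirms that every exceptional prime is vanishing (another hypothesis of Corollary~\ref{Cor: CritsinK}) and that Condition~\ref{MainCor2.} translates into: $v_\gp(f(\tau)-a)\le 1$ for each root $\tau$ of $g(x)$, which after unwinding is exactly the statement that $\frac{a-b}{u}$ is not a $p$-th power modulo $\gp^2$ --- here I would spell out that $v_\gp(u\tau^d + b - a)\ge 2$ for some such $\tau$ iff $(a-b)/u$ is congruent modulo $\gp^2$ to $\tau^d$, i.e.\ to a $p$-th power (since $\tau^d=(\tau^{d/p})^p$ and $d/p$ is a positive integer, the $d$-th powers and $p$-th powers mod $\gp^2$ relevant here coincide up to the choice of $g$). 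Meanwhile Condition~\ref{MainCor1.} is literally: for non-vanishing $\gp$ (equivalently $\gp\nmid d$) and each $n$, $v_\gp(f^n(0)-a)\le 1$, which is Condition~\ref{UniCrit1.}. So the content of Corollary~\ref{Cor: CritsinK} gives: \emph{assuming $(f(x),a)$ is dynamically irreducible}, it is dynamically monogenic iff \ref{UniCrit1.} and \ref{UniCrit2.} hold.

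The remaining --- and genuinely new --- point is to remove the irreducibility assumption: I must show that Conditions~\ref{UniCrit1.} and \ref{UniCrit2.} \emph{imply} that $f^n(x)-a$ is irreducible over $K$ for all $n$. The idea is to exhibit, from each condition, a prime witnessing irreducibility via an Eisenstein-type or Newton-polygon argument. The cleanest route: pick a prime $\gp_0$ appearing to odd/positive valuation in a suitable value, then use the standard capitulation argument for unicritical polynomials. Concretely, for $n=1$: irreducibility of $ux^d+b-a$ over $K$ follows if $b-a\ne 0$ and (by the classical theorem on irreducibility of $x^d-c$) $(a-b)/u$ is not an $\ell$-th power for any prime $\ell\mid d$ and, when $4\mid d$, is not of the form $-4c^4$. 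Condition~\ref{UniCrit2.} (applied at a prime above $\ell$) guarantees $(a-b)/u$ is not an $\ell$-th power even modulo $\gp^2$, hence not globally, for every prime $\ell\mid d$; the $-4c^4$ obstruction when $4\mid d$ is handled similarly by a mod-$\gp^2$ argument at a prime above $2$ (or by noting $-4c^4$ is a $2$nd power up to sign, already excluded). For the inductive step $n\to n+1$, one uses the classical reduction: $f^{n+1}(x)-a$ is irreducible provided $f^n(x)-a$ is irreducible and $f^n(\delta)-a$ is not an $\ell$-th power in $K(\beta_n)$ for the appropriate $\delta$, or, in the monomial-type setting, that a suitable critical-orbit value $f^n(0)-a$ is not an $\ell$-th power times a unit in a controlled way. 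Condition~\ref{UniCrit1.} gives a prime $\gp$ of $\Ocal_K$ with $v_\gp(f^n(0)-a)=1$ whenever $f^n(0)-a$ is a non-unit non-zero (and one argues separately, using $b\ne a$ forced by the conditions, that it is indeed nonzero), and such a prime with valuation exactly $1$ obstructs $f^n(0)-a$ from being an $\ell$-th power, for every $\ell$; this is precisely the input needed in the amalgamated-wreath-product irreducibility criterion for iterates of unicritical polynomials (cf.\ the Juul--Krieger--\ldots / Jones-type criteria, of which Lemma~\ref{lem:irred} and \cite{benedetto2023} provide versions --- though here a more hands-on valuation argument suffices since $f$ is PCF so only finitely many orbit values occur).

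I expect the main obstacle to be precisely this last irreducibility bootstrapping: one has to verify that the valuation data guaranteed by \ref{UniCrit1.} and \ref{UniCrit2.} feed correctly into an iterate-irreducibility criterion for $ux^d+b$, handling the $2$-adic subtlety ($d$ divisible by $4$, the $-4c^4$ exceptional case in Capelli's theorem) and ensuring the relevant orbit values $f^n(0)-a$ are nonzero (if $f^n(0)=a$ then $a$ is post-critical and the orbit collapses --- one should observe this forces $a=b$ or a short cycle, contradicting the conditions or handled directly). A secondary technical nuisance is the bookkeeping for vanishing primes: choosing the polynomial $g(x)$ with $f(x)-a\equiv g(x)^p\bmod\gp$ in a way compatible with the requirement $f(\tau)\in\Ocal_K$ of Corollary~\ref{Cor: CritsinK}, and then checking that ``$(a-b)/u$ not a $p$-th power mod $\gp^2$'' is the faithful translation of ``$v_\gp(f(\tau)-a)\le 1$ for all roots $\tau$ of $g$''. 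Once these are in place, the theorem follows by citing Corollary~\ref{Cor: CritsinK} in the forward direction and noting the conditions are visibly necessary (each is an instance of a $\gp$-maximality failure by Theorems~\ref{Thm: MainLift}/\ref{Thm: MainNonExceptional}) in the reverse direction.
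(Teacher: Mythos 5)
Your reduction to Corollary~\ref{Cor: CritsinK} and the translation of the two conditions are essentially correct and match the paper. The genuine gap is in the irreducibility bootstrapping for the inductive step $n\to n+1$, and I don't think the route you sketch there can be patched in the way you suggest.

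The concrete problem: you claim that Condition~\ref{UniCrit1.} ``gives a prime $\gp$ of $\Ocal_K$ with $v_\gp(f^n(0)-a)=1$ whenever $f^n(0)-a$ is a nonzero non-unit.'' This is false. Condition~\ref{UniCrit1.} is an \emph{upper bound} $v_\gp\le 1$ and it only applies at primes $\gp\nmid d$; it says nothing about primes over $d$. Hence $f^n(0)-a$ could, for instance, be a unit times an exact $p$-th power of a prime over $p\mid d$ without violating anything, and no witnessing prime of valuation~$1$ is guaranteed. In the same vein, the obstruction ``not a $p$-th power times a unit'' needs to be proved not in $K$ but in $K_n=K(\alpha_n)$; a norm argument reduces this to $K$ but then the controlled quantity is $\pm u^{-d^n}\big(f^{n+1}(0)-a\big)$, not $f^n(0)-a$, and you must also handle the sign $(-1)^{d^n}$ and the unit $u^{d^n}$, which your sketch does not address. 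Lastly, the justification ``a hands-on valuation argument suffices since $f$ is PCF'' does not apply: $f(x)=ux^d+b$ is not PCF in general (e.g.\ $x^2+1$), and the theorem places no PCF hypothesis.

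The paper closes this gap by a different and considerably slicker device. Assuming inductively that $f^n(x)-a$ is irreducible, Corollary~\ref{Cor: CritsinK} already shows it is \emph{monogenic}, so $\Ocal_{K_n}=\Ocal_K[\alpha_n]=\Ocal_K[x]/(f^n(x)-a)$. By Vahlen--Capelli, reducibility of $f^{n+1}(x)-a$ forces $\pm\beta_n=\pm(\alpha_n-b)/u$ to be a $p$-th power in $\Ocal_{K_n}$ for some $p\mid d$, i.e.\ there is $g(x)\in\Ocal_K[x]$ with $g(x)^p\equiv \pm(x-b)/u\pmod{f^n(x)-a}$. Reducing mod a prime $\gp$ over $p$, both $g(x)^p$ and $f^n(x)-a$ become polynomials in $x^p$ in $k_\gp[x]$, so the remainder on division must also be a polynomial in $x^p$; but the remainder is the degree-$1$ polynomial $\pm(x-b)/u$ with nonzero linear coefficient, a contradiction. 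This uses the already-proved monogenicity as leverage and entirely avoids the valuation/norm analysis and its sign and unit headaches. Your plan would need to replace that step with the paper's argument (or an equivalently careful version of the norm argument that controls primes over $d$ via Condition~\ref{UniCrit2.}) to be correct.
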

\begin{proof}
Note that Condition \ref{UniCrit1.} amounts to Condition \ref{MainCor1.} of Corollary \ref{Cor: CritsinK}, since $0$ is the unique critical point of $f(x)$. Moreover, Condition \ref{UniCrit2.} is equivalent to Condition \ref{MainCor2.} of the same corollary due to the following: for $\gp|(d)$ of residue characteristic $p$ and any $v,c\in \Ocal_K$ such that $v^p\equiv u \bmod \gp$ and $c^p\equiv a-b\bmod \gp$, we have $f(x)-a \equiv (vx^{d/p}-c)^p \bmod \gp$. Choose any root $\theta$ of $vx^{d/p}-c$   
in an extension of $K$. Condition \ref{MainCor2.} of Corollary \ref{Cor: CritsinK} then implies that $f(\theta)-a = u(\frac{c}{v})^p + b-a \not\equiv 0 \bmod \gp^2$, so in particular $\frac{a-b}{u}$ is not congruent modulo $\gp^2$ to the $p$-th power of $\frac{c}{v}$. But on the other hand, the mod-$\gp$ residue class of elements $\gamma\in \Ocal_K$ such that $\gamma^p\equiv \frac{a-b}{u} \bmod \gp$ is unique, and must be the one of $\frac{c}{v}$ by definition. This shows the claimed equivalence.

Since we have verified that our Conditions \ref{UniCrit1.} and \ref{UniCrit2.} amount to those of Corollary \ref{Cor: CritsinK}, we are left with showing that the two conditions here imply that $f^n(x)-a$ is irreducible for all $n\in \mathbb{N}$. We will show this inductively. Let $\alpha_n$ denote a root of $f^n(x)-a$ and set $K_n=K(\alpha_n)$. 
For the base case $n=1$, note that by the so-called Vahlen-Capelli Theorem (see \cite[Theorem 3.1]{Turnwald}), 
$f(x)-a$ being reducible implies either $\frac{a-b}{u}$ being a $p$-th power in $\Ocal_K$ for some prime divisor $p$ of $d$, or  $(-4)\cdot \frac{b-a}{u}$ being a $4$-th power (the latter case being relevant only for $p=2$). In the former case, $\frac{a-b}{u}$ would have to be a $p$-th power modulo $\gp^2$, which was excluded by Condition \ref{UniCrit2.}. 
In the latter case, we may as well assume that $\gp^2$ divides $2$, since if $v_{\gp}(2)=1$, then $(-4)\cdot \frac{b-a}{u}$ being a $4$-th power implies that its $\gp$-adic valuation is a positive multiple of $4$. Thus $v_{\gp}(\frac{b-a}{u})\ge 2$, and we have a contradiction. However, assuming $\gp^2$ divides $2$ implies that $-1$ is a square modulo $\gp^2$. Thus $(-4)\cdot \frac{b-a}{u}$ being a $4$-th power implies $\frac{b-a}{u}$ is a square modulo $\gp^2$, again a contradiction. This concludes the proof of the base case $n=1$.

 Assume now inductively that $f^n(x)-a$ is irreducible. Then it is also monogenic over $K$ by Corollary \ref{Cor: CritsinK}, i.e., $\Ocal_K[\alpha_n] = \Ocal_{K_n}$. To verify that $f^{n+1}(x)-a$ is irreducible, it suffices to show that $f(x)-\alpha_n$ is irreducible over $\Ocal_{K_n}$. Set $\beta_n=\frac{\alpha_n-b}{u}$. Clearly $f(x)-\alpha_n$ is reducible if and only if $x^d-\beta_n$ is. By the Vahlen-Capelli lemma, this can only happen if there exists some prime divisor $p$ of $d$ such that $\beta_n$ or $-\beta_n$ (the latter being needed only in the case that $p=2\mid d$) is a $p$-th power in $\Ocal_{K_n}=\Ocal_K[\alpha_n] = \Ocal_K[x]/(f^n(x)-a)$. 
The latter implies the existence of a polynomial $g(x)\in \Ocal_K[x]$ such that $g(x)^p\equiv \pm (x-b)/u$ mod $(f^n(x)-a)$. But consider this congruence over the residue field $k_{\gp}$ of $\gp$, for a prime $\gp$ of $\Ocal_K$ extending the rational prime $p$. Then both $g(x)^p$ and $f^n(x)-a$ become polynomials in $x^p$, and hence so does the remainder after division of one of them by the other. But clearly this is not the case for the degree-$1$ remainder $\pm(x-b)/u$. This contradiction concludes the proof. 
\end{proof}

Applying Theorem \ref{thm:unicrit}, we obtain a particularly nice result for the family of unicritical polynomials $f(x)=1-x^d\in \mathbb{Z}[x]$. Note that the iterated extensions defined by $f^{n}(x)-a$ behave quite differently from those defined by $x^{d^n}-a$; in particular, the Galois group of the former is in general far from cyclic even after adjoining roots of unity, meaning that the theorem below yields many monogenic extensions with ``interesting" Galois groups.

\begin{theorem}
\label{thm:1minusxq}
Let $d\ge 2$ be an integer, $f(x)=1-x^d$ and $a\in \mathbb{Z}$.
Then the following are equivalent:
\begin{enumerate}[label=\textbf{\arabic*.}, ref=\arabic*]
\item $(f(x),a)$ is dynamically monogenic. \label{Thm1-1.}
\item $f^2(x)-a$ is monogenic. \label{Thm1-2.}
\item Both of the following hold: \label{Thm1-3.}
\begin{enumerate}[label=\textbf{\roman*.}, ref=\roman*]
\item $a$ and $1-a$ are both squarefree, \label{Thm1-3.i.}
\item $(1-a)^{p-1}\not\equiv 1 \bmod{p^2}$ for all prime divisors $p$ of $d$. \label{Thm1-3.ii.}
\end{enumerate}
\end{enumerate}
In particular, the set of values $a\in \mathbb{Z}$ such that $(f(x),a)$ is dynamically monogenic has positive density 
\[\frac{\varphi(d)}{d} \cdot \prod_{\substack{\ell \text{ prime }\\  \ell\nmid d}} \left(1-\frac{2}{\ell^2}\right).\]
\end{theorem}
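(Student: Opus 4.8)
The plan is to derive the three equivalences from Theorem~\ref{thm:unicrit} (for the implication \ref{Thm1-3.}$\Rightarrow$\ref{Thm1-1.}) and from Corollary~\ref{Cor: CritsinK} taken with $N=2$ (for \ref{Thm1-2.}$\Rightarrow$\ref{Thm1-3.}), the implication \ref{Thm1-1.}$\Rightarrow$\ref{Thm1-2.} being immediate; the density statement then follows from a routine sieve. First I would record the relevant dynamics of $f(x)=1-x^d$: its unique critical point is $\theta=0$, and since $f(0)=1$ and $f(1)=0$, the forward orbit of $0$ is $\{0,1\}$, so $f$ is PCF and $f^n(0)\in\{0,1\}$ for every $n\ge 1$ (indeed $f^n(0)=1$ for $n$ odd and $0$ for $n$ even). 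Also $f'(x)=-dx^{d-1}$, so the vanishing primes are exactly the prime divisors $p$ of $d$; for $p\nmid d$ the cover $x\mapsto 1-x^d$ is tamely ramified, hence by Lemma~\ref{Lem: Exceptional<=deg} no non-vanishing prime is exceptional, and the single critical value $1$ is rational. Thus the hypotheses of both Theorem~\ref{thm:unicrit} (applied with $u=-1$, $b=1$) and Corollary~\ref{Cor: CritsinK} are in force.

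The implication \ref{Thm1-1.}$\Rightarrow$\ref{Thm1-2.} is immediate from the definition of dynamical monogenicity. For \ref{Thm1-3.}$\Rightarrow$\ref{Thm1-1.} I would verify the two hypotheses of Theorem~\ref{thm:unicrit}. Its Condition~\ref{UniCrit1.} asks that $v_p(f^n(0)-a)\le 1$ for all $n$ and all $p\nmid d$; since $f^n(0)-a\in\{-a,\,1-a\}$ this is exactly the squarefreeness of $a$ and of $1-a$, i.e.\ \ref{Thm1-3.i.}. Its Condition~\ref{UniCrit2.} asks that $\tfrac{a-b}{u}=1-a$ not be congruent to a $p$-th power modulo $p^2$ for each $p\mid d$. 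The elementary fact I would use here is that the $p$-th powers in $\ZZ/p^2\ZZ$ are precisely $\{0\}$ together with the order-$(p-1)$ subgroup $\{x\in(\ZZ/p^2\ZZ)^\times : x^{p-1}\equiv 1\}$, so there are exactly $p$ of them; a short case split on whether $p\mid(1-a)$ then shows that ``$1-a$ is not a $p$-th power mod $p^2$'' is equivalent to the conjunction of $v_p(1-a)\le 1$, $v_p(a)\le 1$, and $(1-a)^{p-1}\not\equiv 1\bmod p^2$, all of which are guaranteed by \ref{Thm1-3.i.} and \ref{Thm1-3.ii.}.

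For \ref{Thm1-2.}$\Rightarrow$\ref{Thm1-3.}, I would first note that monogenicity of $f^2(x)-a$ forces $f^2(x)-a$, hence also $f(x)-a$, to be irreducible, and by \cite[Proposition~3.4]{SmithWolske} forces $f(x)-a$ to be monogenic as well; thus Corollary~\ref{Cor: CritsinK} applies with $N=2$. Reading off Condition~\ref{MainCor1.} with $\theta=0$ and $n\in\{1,2\}$ gives $v_p(1-a)\le 1$ and $v_p(a)\le 1$ for all $p\nmid d$. For $p\mid d$, writing $f(x)-a\equiv\bigl((1-a)-x^{d/p}\bigr)^p\bmod p$ and evaluating at a root $\tau$ of $g(x):=(1-a)-x^{d/p}$ (so $\tau^{d}=(1-a)^p$ and hence $f(\tau)-a=(1-a)-(1-a)^p\in\ZZ$), Condition~\ref{MainCor2.} becomes $v_p\bigl((1-a)(1-(1-a)^{p-1})\bigr)\le 1$; the same case split as above turns this into $v_p(1-a)\le 1$, $v_p(a)\le 1$, and $(1-a)^{p-1}\not\equiv 1\bmod p^2$. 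Collecting these conditions over all primes yields precisely \ref{Thm1-3.i.} and \ref{Thm1-3.ii.}. It is exactly because $f^n(0)$ only ever equals $0$ or $1$ that the $N=2$ data already captures all $n$, which is why $f^2(x)-a$ alone suffices.

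Finally, for the density: \ref{Thm1-3.} is a conjunction of conditions on the residue of $a$ modulo $\ell^2$, one for each prime $\ell$, and these are independent by the Chinese Remainder Theorem. For $\ell\nmid d$ the condition ``$\ell^2\nmid a$ and $\ell^2\nmid (1-a)$'' excludes the two residues $a\equiv 0,1\bmod\ell^2$, giving local density $1-2/\ell^2$; for $\ell=p\mid d$ the condition is equivalent (by the analysis above) to ``$1-a$ is not a $p$-th power mod $p^2$'', excluding $p$ of the $p^2$ residues, giving local density $1-1/p=(p-1)/p$. A standard sieve (with the tail controlled by $\sum_{\ell>X}1/\ell^2$) then shows the natural density equals the product $\prod_{p\mid d}\tfrac{p-1}{p}\cdot\prod_{\ell\nmid d}\bigl(1-\tfrac{2}{\ell^2}\bigr)=\tfrac{\varphi(d)}{d}\prod_{\ell\nmid d}\bigl(1-\tfrac{2}{\ell^2}\bigr)$, which is positive. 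I expect the only real obstacle to be bookkeeping: matching the ``$p$-th power mod $p^2$'' conditions produced by Theorem~\ref{thm:unicrit} and Corollary~\ref{Cor: CritsinK} with the transparent squarefree-plus-Fermat-quotient statement \ref{Thm1-3.}, and confirming that the vanishing-prime condition is insensitive to the iterate so that the level-$2$ data is enough.
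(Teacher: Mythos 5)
Your proposal is correct and follows essentially the same route as the paper: the implication \ref{Thm1-3.}$\Rightarrow$\ref{Thm1-1.} via Theorem~\ref{thm:unicrit}, \ref{Thm1-1.}$\Rightarrow$\ref{Thm1-2.} trivially, \ref{Thm1-2.}$\Rightarrow$\ref{Thm1-3.} via Corollary~\ref{Cor: CritsinK} with $N=2$ (after noting that monogenicity of $f^2(x)-a$ descends to $f(x)-a$), and the same mod-$p^2$ accounting for the density. Your case split making precise the equivalence between ``$1-a$ is not a $p$-th power mod $p^2$'' and the conjunction $v_p(a)\le 1$, $v_p(1-a)\le 1$, $(1-a)^{p-1}\not\equiv 1\bmod p^2$ is exactly what the paper does, and you usefully spell out the $p=2$ subtlety.
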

\begin{proof}
Note that $f(x)=1-x^d$ is a unicritical PCF polynomial with critical orbit $\{0,1\}$ of length $2$. 
From 
Theorem \ref{thm:unicrit}, dynamical monogenicity of $(f(x),a)$ is equivalent to the following two conditions being fulfilled for all primes $p$:
\begin{enumerate}[label=\textbf{\Alph*.}, ref=\Alph*]
\item If $p$ is a non-vanishing prime for $f(x)$ (i.e., a prime not dividing $d$), then $p^2$ does not divide any of the integers $a$ and $1-a$. \label{ItemA}
\item If $p$ is a vanishing prime, then $1-a$ is not a $p$-th power modulo $p^2$. \label{ItemB}
\end{enumerate}
Note that \ref{ItemB} implies the following:
\begin{enumerate}[label=\textbf{C.}, ref=C]
\item Neither $a$ nor $1-a$ is divisible by $p^2$. \label{ItemC}
\end{enumerate}
Hence, dynamical monogenicity is equivalent to all Conditions \ref{ItemA}, \ref{ItemB}, and \ref{ItemC} being fulfilled. But \ref{ItemA} and \ref{ItemC} together are equivalent to Condition \ref{Thm1-3.}\ref{Thm1-3.i.}. On the other hand, note that the {\it non-zero} $p$-th powers in $\mathbb{Z}/p^2\mathbb{Z}$ are exactly the $p$-th powers which are coprime to $p$, i.e., the elements of $(\mathbb{Z}/p^2\mathbb{Z})^\times \cong C_{p(p-1)}$ whose multiplicative order divides $p-1$. Hence \ref{ItemB} and \ref{ItemC} together are equivalent to \ref{Thm1-3.}\ref{Thm1-3.ii.}

In total, we have obtained that the dynamical monogenicity of $(f(x),a)$ is equivalent to $a$ fulfilling both Conditions \ref{Thm1-3.}\ref{Thm1-3.i.} and \ref{Thm1-3.}\ref{Thm1-3.ii.} of the theorem. On the other hand, Conditions \ref{Thm1-3.}\ref{Thm1-3.i.} and \ref{Thm1-3.}\ref{Thm1-3.ii.} are exactly the conditions obtained from setting $N=2$ in Corollary \ref{Cor: CritsinK}, whence \ref{Thm1-2.} implies \ref{Thm1-3.}. The implication \ref{Thm1-1.}$\Rightarrow$\ref{Thm1-2.} is trivial.

Lastly, the density assertion follows by a Chinese remainder argument as in the proof of Lemma \ref{lem:posdensity}. Explicitly, we notice that Condition \ref{Thm1-3.}\ref{Thm1-3.i.} excludes exactly two residue classes modulo $\ell^2$ for all primes $\ell$ not dividing $d$. Whereas, for primes $p|d$, $1-a$ satisfies \ref{Thm1-3.}\ref{Thm1-3.ii.} in $p(p-1)$ of the possible $p^2-1$ nonzero residue classes modulo $p^2$, and \ref{Thm1-3.}\ref{Thm1-3.i.} additionally excludes only $1-a\equiv 0$ mod $p^2$. Thus, a total of exactly $p(p-1)$ out of $p^2$ residue classes are allowed.
\end{proof}

For pairs $(f(x),a)$ with $f(x)=1-x^d$ and $a$ not necessarily integral,
we can use Theorem \ref{thm:eisen_dynamic} to obtain results on dynamic monogenicity for the {\it fields} generated by roots of $f^{n}(x)-a$ (rather than monogenicity of the polynomials themselves). 

Concretely, we have the following:
\begin{theorem}\label{Thm: RadFields}
Let $d\in \mathbb{N}$ and $b\in \mathbb{N}$ be positive integers such that all of the following hold:
\begin{enumerate}[label=\textbf{\roman*.}, ref=\roman*]
\item $b$ and $b-1$ are squarefree. \label{RadFieldi.}
\item For all primes $p$ which divide $d$, but not $b$, one has $(1-1/b)^{p-1}\not\equiv 1 \bmod{p^2}$. \label{RadFieldii.}
\end{enumerate}
Let $f(x)=1-x^d$, and write $K_n$ for an extension of $\mathbb{Q}$ generated by a root of $f^n(x)-1/b$. Then $K_n$ is monogenic for all $n\in \mathbb{N}$.
\end{theorem}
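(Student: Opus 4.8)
The plan is to apply Theorem \ref{thm:eisen_dynamic} with $K=\mathbb{Q}$, $f(x)=1-x^d$, the periodic point $\gamma=0$ (which satisfies $f^2(0)=1-(1-0^d)^d=0$, so $0$ is periodic of period dividing $2$), and $b$ the given positive integer. Note that $\frac{1+b\gamma}{b}=\frac{1}{b}$, so the fields $K_n$ defined here are exactly those appearing in the hypotheses of Theorem \ref{thm:eisen_dynamic}. The polynomial $f(x)=1-x^d$ is unicritical PCF with critical point $\theta=0$ and postcritical orbit $\{0,1\}$; moreover $f'(x)=-dx^{d-1}$, so the vanishing primes of $f$ are exactly the primes $\gp=(p)$ with $p\mid d$, and these are the only exceptional primes (every non-vanishing prime is non-exceptional since, e.g., $|\Omega|=|\{0,1\}|=2\le 3$, or directly because $f'(x)$ has only the repeated factor $x$ whose relevant multiplicities behave as required). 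So the hypothesis of Theorem \ref{thm:eisen_dynamic} that all exceptional primes are vanishing is satisfied, and the existence of the integral periodic point $\gamma=0$ is in hand.

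Next I would verify the three numbered conditions of Theorem \ref{thm:eisen_dynamic}. Condition \ref{Eiseni.}, that $v_\gp(b)\le 1$ for all $\gp$, is precisely squarefreeness of $b$, which is part of hypothesis \ref{RadFieldi.}. For Condition \ref{Eisenii.}: the only critical point is $\theta=0$, and the forward orbit of $0$ under $f$ is $\{0,1\}$, so $\{f^k(0):k\in\mathbb{N}\}=\{0,1\}$. Thus, for a non-vanishing prime $p\nmid d$ (hence $p$ unramified, $e(\gP\mid\gp)=1$, and one can work directly with $v_p$ since everything in sight is rational), the requirement $v_\gP(1+(\gamma-f^k(\theta))b)\le e(\gP\mid\gp)$ becomes $v_p(1-f^k(0)\cdot b)\le 1$ for $f^k(0)\in\{0,1\}$, i.e.\ $v_p(1)\le 1$ (trivially true) and $v_p(1-b)\le 1$. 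Over primes $p\nmid d$ and $p\nmid b$ the latter is exactly squarefreeness of $b-1$, the other half of \ref{RadFieldi.}; for primes $p\mid b$ one has $v_p(1-b)=0$ automatically, and those primes are excluded from Condition \ref{Eisenii.} anyway (it only concerns primes not dividing $b$). So \ref{RadFieldi.} gives Condition \ref{Eisenii.}.

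For Condition \ref{Eiseniii.}, the vanishing primes are $\gp=(p)$ with $p\mid d$; we only need those with $p\nmid b$. Here $f(x)-\frac{1}{b}=1-\frac{1}{b}-x^d\equiv (1-\tfrac1b)-x^d\bmod p$; since $p\nmid b$, $\frac1b$ makes sense mod $p$ and, choosing $c\in\mathbb{Z}_{(p)}$ with $c^p\equiv 1-\frac1b\bmod p$, we get $f(x)-\frac1b\equiv (c-x^{d/p})^p\bmod \gp$, so one may take $g(x)=c-x^{d/p}$ (this uses $p\mid d$). A root $\tau$ of $g$ satisfies $\tau^{d/p}=c$, hence $\tau^d=c^p$, so $f(\tau)-\frac1b=1-\frac1b-c^p$; the condition $v_\gP(f(\tau)-\frac1b)\le e(\gP\mid\gp)$ is, after the usual reduction to mod-$\gp^2$ powers exactly as in the proof of Theorem \ref{thm:unicrit}, equivalent to $1-\frac1b$ not being a $p$-th power modulo $p^2$, which (since $1-\frac1b$ is a unit mod $p$) is equivalent to its multiplicative order mod $p^2$ not dividing $p-1$, i.e.\ to $(1-1/b)^{p-1}\not\equiv 1\bmod p^2$. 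This is exactly hypothesis \ref{RadFieldii.}.

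Having checked all three conditions of Theorem \ref{thm:eisen_dynamic}, that theorem yields immediately that $K_n$ is monogenic over $\mathbb{Q}$ for all $n\in\mathbb{N}$, completing the proof. The only genuinely nontrivial step is the mod-$\gp^2$ manipulation in Condition \ref{Eiseniii.} showing that the valuation inequality at a root of $g(x)$ translates into the clean power-residue condition $(1-1/b)^{p-1}\not\equiv 1\bmod p^2$; this is the same computation carried out in the proof of Theorem \ref{thm:unicrit} (with $a-b$ there playing the role of our $1-\frac1b$ after clearing $b$), so one should either cite that argument or reproduce the short identity $f(\tau)-\frac1b = u(\tfrac cv)^p + \cdots$ verbatim. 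Everything else is routine bookkeeping of valuations, using that all quantities are rational and that non-vanishing primes are unramified in the splitting field of $f'(x)=-dx^{d-1}$ (which is just $\mathbb{Q}$ itself, so $\gP=\gp$ and $e(\gP\mid\gp)=1$ there).
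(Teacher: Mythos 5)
Your proposal is essentially the paper's own proof: it applies Theorem~\ref{thm:eisen_dynamic} with the periodic point $\gamma=0$ (which indeed satisfies $f^2(0)=0$), identifies the postcritical set as $\{0,1\}$, and translates Conditions~\ref{Eiseni.}--\ref{Eiseniii.} of that theorem into hypotheses~\ref{RadFieldi.} and~\ref{RadFieldii.}. The paper's version is terser and simply defers the translation of Condition~\ref{Eiseniii.} to the analogous computation in the proof of Theorem~\ref{thm:1minusxq}; your write-up spells it out, which is fine.

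One small inaccuracy to fix. In handling Condition~\ref{Eiseniii.} you assert that $1-\tfrac1b$ is a unit mod $p$, but this is not guaranteed by the hypotheses: $p\mid d$ and $p\nmid b$ do not prevent $p\mid b-1$ (take $d=3$, $b=4$, $p=3$). When $p\mid b-1$, the order-of-a-unit argument is unavailable. Nevertheless the equivalence you want --- that ``$1-\tfrac1b$ is not a $p$-th power mod $p^2$'' is the same as ``$(1-\tfrac1b)^{p-1}\not\equiv 1\bmod p^2$'' --- still holds in this case, precisely because $b-1$ is squarefree: then $v_p(1-\tfrac1b)=1$, so $1-\tfrac1b$ is neither $0$ nor a unit in $\mathbb{Z}/p^2\mathbb{Z}$, hence not a $p$-th power (the nonzero $p$-th powers mod $p^2$ are exactly the units of multiplicative order dividing $p-1$); and $(1-\tfrac1b)^{p-1}$ has $p$-adic valuation $p-1\ge 1$, so it cannot be $\equiv 1\bmod p^2$. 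This is exactly the case distinction the paper carries out in the proof of Theorem~\ref{thm:1minusxq} (where it discusses the ``non-zero $p$-th powers in $\mathbb{Z}/p^2\mathbb{Z}$'' and combines Conditions B and C). You should either cite that argument as the paper does, or include this short dichotomy explicitly rather than asserting unithood.
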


\begin{proof}
This follows from Theorem \ref{thm:eisen_dynamic} via using the periodic point $\gamma=0$. We only need to verify that Assumptions \ref{RadFieldi.} and \ref{RadFieldii.} here imply Assumptions \ref{Eiseni.}, \ref{Eisenii.}, and \ref{Eiseniii.} of Theorem \ref{thm:eisen_dynamic}. Indeed, since the set of postcritical points of $f(x)$ is $\{0,1\}$, the squarefreeness of $b$ and of $b-1$ amounts to Conditions \ref{Eiseni.} and \ref{Eisenii.} of Theorem \ref{thm:eisen_dynamic}, respectively. Further, Condition \ref{RadFieldii.} here amounts to Condition \ref{Eiseniii.} of Theorem \ref{thm:eisen_dynamic}, as explained in the proof of Theorem \ref{thm:1minusxq}.
\end{proof}

Unicritical PCF polynomials defined over $\QQ$ only produce critical orbits with fixed points or $2$-cycles, cf.\ \cite[Theorems 2.1 and 2.3]{anderson_cubic_2020}. 
In order to construct critical orbits containing an $n$-cycle, we find quadratic polynomials $h(x)=x^2+\alpha_n$, where $\alpha_n$ is defined by $h^n(\alpha_n)=0$ and $h^k(\alpha_n)\neq 0$ for $0\leq k <n$. The fields of definition for $\alpha_n$ are given by the Gleason polynomials for order $n$, and $\alpha_n$ is a Misiurewicz point of period $(1,n)$. (See \cite{Buff},\cite{GokselUnicrit},\cite{HutzTowsley} for additional exposition.)

\begin{example}\label{Example: Quadratic 3-cycle}
Let $K=\QQ[\theta]$ be the number field defined by $\theta^3-\theta^2+1=0$ (note that $\Ocal_K=\ZZ[\theta]$), and let $f(x)=x^2+\theta-1$. Then $f(x)$ is PCF, since $f(0)=\theta-1$, $f(\theta-1)=\theta^2-\theta$, and $f(\theta^2-\theta)=0$, and hence has a $3$-cycle as its critical orbit. 

Let $a \in \Ocal_K$ such that $\{a, a-\theta+1, a-\theta^2+\theta\}$ are squarefree, and that $a \not\equiv \theta-1 \bmod 2$. Note that $2$ is inert in $\Ocal_K$, and is the only vanishing prime. Then by Theorem \ref{thm:unicrit},  $(f(x), a)$ is dynamically monogenic over $K$.
\end{example}


\subsection{Chebyshev polynomials}
In this section, we determine all dynamically monogenic pairs $(T_d(x),a)$ where $a\in \mathbb{Z}$ and $T_d(x)$ is the (normalized) degree $d$ Chebyshev polynomial, thereby generalizing \cite[Theorem 1.2]{GassertRadical} which deals with the case where $d$ is prime, and characterize those pairs which generate totally real fields. Using the notation and results from  \cite{SilvermanDynamical}, the normalized Chebyshev polynomials $T_d(x)\in \ZZ[x]$ have the following properties: 
\begin{enumerate}
    \item They satisfy the recurrence $T_{d+1}(x)=xT_d(x)-T_{d-1}(x)$, with $T_0(x)=2, \ T_1(x)=x$.
    \item They are monic.
    \item They commute: $T_d\big(T_e(x)\big)=T_{de}(x)=T_e\big(T_d(x)\big)$. Note, composition gives $T_d^n(x)=T_{d^n}(x)$, so iterates of Chebyshev polynomials are also Chebyshev polynomials.
\item Their derivatives are $T'_d(x)=dU_{d-1}(x)$, where $U_{d+1}(x)=xU_d(x)-U_{d-1}(x)$ with $U_{-1}=0$ and $U_0(x)=1$. The $U_i(x)$ are the normalized Chebyshev polynomials of the second kind. Together $T_d(x)$ and $U_{d-1}(x)$ satisfy the quadratic relation
\[
T_d(x)^2=4+(x^2-4)U_{d-1}(x)^2.
\]
\end{enumerate}
Using this last relation, we see the $T_d(x)$ are PCF. Indeed, if $U_{d-1}(\alpha)=0$, then $T_d(\alpha)=\pm 2$, and $T_d(\pm2)^2=4$. Using the recurrence, we have $T_d(2)=2$, $T_d(-2)=(-1)^d2$. Further, the roots of $T_d(x)$ and $U_d(x)$ are real, belong to the interval $[-2,2]$, and are simple. They are given, respectively, by 
\[2\cos\left(\frac{(2k-1)\pi}{2d}\right)=\zeta_{2d}^{2k-1}+\zeta_{2d}^{1-2k} \quad \text{ and }\quad 2\cos\left(\frac{k\pi}{2(d+1)}\right)=\zeta_{2(d+1)}^{k}+\zeta_{2(d+1)}^{-k},\]
where $1\leq k \leq d$, and $\zeta_{n}$ is a primitive $n$-th root of unity.
The main result of this section is the following.
\begin{theorem}
\label{thm:cheby}
Let $d\ge 2$, and let $T_d(x)$ be the normalized Chebyshev polynomial of degree $d$. Then for $a\in \mathbb{Z}$, the pair $(T_d(x),a)$ is dynamically monogenic if and only if all of the following hold.
\begin{enumerate}[label=\textbf{\roman*.}, ref=\roman*]
\item For all primes $p$ not dividing $d$, one has $a\not\equiv \pm 2 \bmod {p^2}$. \label{Chebyi.}
\item For all primes $p|d$, one has $T_p(a)\not\equiv a \bmod {p^2}$. \label{Chebyii.}
\end{enumerate}
\end{theorem}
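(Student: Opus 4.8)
Recall from the stated properties that $T_d(x)$ is monic and post-critically finite: its critical points are the roots of $U_{d-1}(x)$, each critical value is $2$ or $-2$, and $T_d(2)=2$, $T_d(-2)=(-1)^d 2$, so that $T_d^n(\theta)\in\{2,-2\}$ for every critical point $\theta$ and every $n\ge 1$; moreover $T_d'(x)=d\,U_{d-1}(x)$, so the vanishing primes of $T_d$ are exactly the primes $p\mid d$. The plan is to deduce both directions from Corollary \ref{Cor: CritsinK}, so first I would check its hypotheses for $f=T_d$. The critical values $\pm 2$ are $\mathbb{Q}$-rational. Every exceptional prime is vanishing: for $p\nmid 2d$ the cover $x\mapsto T_d(x)$ over $\overline{\mathbb{F}_p}$ is tamely ramified (the ramification index is $d$ over $\infty$ and $2$ at each of the $d-1$ distinct finite critical points, $U_{d-1}$ being separable mod $p$), so $p$ is non-exceptional by Lemma \ref{Lem: Exceptional<=deg}\ref{a)ramcover}; and when $p=2\nmid d$ (so $d$ is odd) the $\mathbb{Q}$-irreducible factors of $U_{d-1}(x)$ are minimal polynomials of the numbers $\zeta_{2d}^{\,k}+\zeta_{2d}^{-k}$, which generate subfields of $\mathbb{Q}(\zeta_{2d})$; since $4\nmid 2d$, the prime $2$ is unramified there, so those factors remain separable modulo $2$ and Lemma \ref{Lem: Exceptional<=deg}\ref{b)sepfactors} applies.

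Next I would translate the two conditions of Corollary \ref{Cor: CritsinK}. For a critical point $\theta$ and $n\ge 1$ one has $T_d^n(\theta)=T_{d^n}(\theta)=T_{d^{n-1}}(\pm 2)\in\{2,-2\}$, and as $\theta$ and $n$ vary both values occur; since they lie in $\mathbb{Z}$, Condition \ref{MainCor1.} becomes exactly $a\not\equiv\pm 2\pmod{p^2}$ for all $p\nmid d$, i.e.\ Condition \ref{Chebyi.}. For a vanishing prime $p\mid d$ write $d=p^k m$ with $p\nmid m$; by the classical congruence $T_{p^k}(x)\equiv x^{p^k}\pmod p$ one gets $T_d(x)-a=T_{p^k}(T_m(x))-a\equiv (T_m(x)-a)^{p^k}\pmod p$, so the polynomial $g(x)$ in Corollary \ref{Cor: CritsinK} may be chosen with the same roots as $T_m(x)-a$; for such a root $\tau$ one has $T_d(\tau)-a=T_{p^k}(T_m(\tau))-a=T_{p^k}(a)-a\in\mathbb{Z}$, and a short computation (induction using $T_p(x)\equiv x^p\bmod p$) gives $T_{p^k}(a)\equiv T_p(a)\pmod{p^2}$, so Condition \ref{MainCor2.} becomes exactly $T_p(a)\not\equiv a\pmod{p^2}$, i.e.\ Condition \ref{Chebyii.}. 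With these translations, ``dynamically monogenic $\Rightarrow$ \ref{Chebyi.}, \ref{Chebyii.}'' is immediate: dynamical monogenicity makes every $T_{d^n}(x)-a$ irreducible and $\gp$-maximal for all $\gp$, and the ``only if'' part of Corollary \ref{Cor: CritsinK} yields the two conditions.

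For the converse, assume \ref{Chebyi.} and \ref{Chebyii.}; it remains to show that $T_{d^n}(x)-a$ is irreducible over $\mathbb{Q}$ for all $n$, for then the ``if'' part of Corollary \ref{Cor: CritsinK} gives $\gp$-maximality for all $n$ and all $\gp$, hence dynamical monogenicity. I would prove irreducibility by induction on $n$, with monogenicity of $T_{d^n}(x)-a$ (so $\mathcal{O}_{K_n}=\mathbb{Z}[\alpha_n]$, where $\alpha_n$ is a root and $K_n=\mathbb{Q}(\alpha_n)$) obtained at each stage from Corollary \ref{Cor: CritsinK} and the inductive hypothesis, mimicking the proof of Theorem \ref{thm:unicrit}. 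In the inductive step, $T_{d^{n+1}}(x)-a$ is reducible over $\mathbb{Q}$ iff $T_d(x)-\alpha_n$ is reducible over $K_n$; via the substitution $x=y+y^{-1}$, under which $T_d(y+y^{-1})-c=y^{-d}\bigl(y^{2d}-c\,y^d+1\bigr)$ and the roots of $T_d(x)-c$ correspond to $d$-th roots of a root $\lambda_c$ of $z^2-cz+1$, a Vahlen--Capelli-type analysis shows that $T_d(x)-c$ can be reducible over a number field $E$ only if $c=T_\ell(c')$ for some prime $\ell\mid d$ and some $c'\in\mathcal{O}_E$ (together with the standard edge cases tied to $2\mid d$ and $4\mid d$, dispatched by valuation arguments as in Theorem \ref{thm:unicrit}). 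But $\alpha_n=T_\ell(c')$ with $c'=G(\alpha_n)$, $G\in\mathbb{Z}[x]$, forces $T_\ell(G(x))-x\equiv 0\pmod{\bigl(\ell,\,T_{d^n}(x)-a\bigr)}$; since $T_\ell(G(x))\equiv G(x)^\ell=G(x^\ell)\pmod{\ell}$ and, because $\ell\mid d$, the reduction of $T_{d^n}(x)-a$ modulo $\ell$ is a monic polynomial in $x^\ell$, the remainder of $G(x^\ell)$ by it is again a polynomial in $x^\ell$ --- contradicting that this remainder equals $x$. The base case $n=1$ is handled the same way directly over $\mathbb{Q}$: reducibility of $T_d(x)-a$ would force $a=\pm 2$ (excluded by \ref{Chebyi.}) or $a=T_\ell(c')$ with $c'\in\mathbb{Z}$ and $\ell\mid d$ prime, whence $T_\ell(a)=T_{\ell^2}(c')\equiv T_\ell(c')=a\pmod{\ell^2}$, contradicting \ref{Chebyii.}.

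The main obstacle will be the irreducibility step: one must pin down the exact reducibility criterion for $T_d(x)-c$ over an arbitrary number field, handling composite $d$ and the $2\mid d$/$4\mid d$ subtleties, and then run the modulo-$\ell$ ``polynomial in $x^\ell$'' contradiction cleanly, using the monogenicity $\mathcal{O}_{K_n}=\mathbb{Z}[\alpha_n]$ furnished by Corollary \ref{Cor: CritsinK}. The remainder is bookkeeping around Corollary \ref{Cor: CritsinK} together with the explicit dynamics of the $T_d$ ($T_d(2)=2$, $T_{p^k}(x)\equiv x^{p^k}\bmod p$, and $T_{p^k}(a)\equiv T_p(a)\bmod p^2$).
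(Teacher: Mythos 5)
Your first half --- translating the two stated conditions to those of Corollary \ref{Cor: CritsinK}, and checking that every exceptional prime is vanishing --- matches the paper's Lemma \ref{Lem: ChebyMonoIfIrred}. The real divergence is in how you establish dynamical irreducibility. You already note that it suffices to prove $T_{d^n}(x)-a$ irreducible over $\mathbb{Q}$ for each $n$, but you then propose an induction over the tower $K_n$: a Turnwald/Vahlen--Capelli reducibility criterion for $T_d(x)-c$ over the intermediate number fields $K_n$, the monogenicity $\mathcal{O}_{K_n}=\mathbb{Z}[\alpha_n]$ from the inductive step, and a modulo-$\ell$ ``polynomial in $x^\ell$'' contradiction (which does work, since $T_{d^n}(x)-a$ reduces mod $\ell$ to a polynomial in $x^\ell$ and division can be performed in the variable $y=x^\ell$). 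The paper instead pushes the identity $T_d^n(x)=T_{d^n}(x)$ one step further: because $d^n$ has the same set of prime divisors as $d$, Condition \ref{Chebyii.} (for $d$) gives exactly the Turnwald conditions for $T_{d^n}(x)-a$ over $\mathbb{Q}$ for every $n$ simultaneously. One then simply invokes Turnwald's theorem over $\mathbb{Q}$, with no induction, no tower, and no need for a reducibility criterion over the fields $K_n$ --- precisely the part of your argument you flag as the main obstacle. (The $4\mid d^n$ edge case is dealt with by noting that Condition \ref{Chebyii.} at $p=2$ forces $a\not\equiv 2,3\bmod 4$, which excludes the shape $a=-4c^4+8c^2-2$.) Your route is plausible but leaves the Turnwald-over-$K_n$ criterion and its $2\mid d$, $4\mid d$ subtleties unresolved; the paper's route avoids these entirely and is substantially shorter.
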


We prove Theorem \ref{thm:cheby} by first assuming a pair is dynamically irreducible and applying the main theorem, then showing that the conditions required for the pair to be dynamically monogenic are sufficient to imply they are dynamically irreducible.

\begin{lemma}\label{Lem: ChebyMonoIfIrred}
    Let $d\ge 2$ and $a\in \ZZ$ such that $(T_d(x),a)$ is dynamically irreducible. Then $(T_d(x),a)$ is dynamically monogenic if and only if 
    \begin{itemize}
        \item for all primes $p$ not dividing $d$, one has $a\not\equiv \pm 2 \bmod {p^2}$, and
        \item for all primes $p|d$, $T_p(a)\not\equiv a \bmod {p^2}$.
    \end{itemize}
\end{lemma}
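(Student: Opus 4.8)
The plan is to apply Corollary \ref{Cor: CritsinK} to the pair $(T_d(x),a)$ under the standing hypothesis of dynamical irreducibility, which removes the irreducibility obstruction, so that dynamical monogenicity is equivalent to the two valuation conditions (for non-vanishing and vanishing primes). The first task is to check the hypotheses of Corollary \ref{Cor: CritsinK}: namely that $T_d(x)$ has $\QQ$-rational critical values and that every exceptional prime is a vanishing prime. For the former, I would invoke the quadratic relation $T_d(x)^2 = 4 + (x^2-4)U_{d-1}(x)^2$: a critical point $\theta$ satisfies $T_d'(\theta) = dU_{d-1}(\theta) = 0$, hence (away from primes dividing $d$) $U_{d-1}(\theta)=0$, and then $T_d(\theta)^2 = 4$, i.e.\ $T_d(\theta) = \pm 2 \in \QQ$. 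For the non-exceptionality claim, I would note that the only primes $p$ where $T_d'(x) = dU_{d-1}(x)$ can fail to be separable mod $p$ or the cover can be wildly ramified are those dividing $d$ (since $U_{d-1}$ has simple roots in characteristic $0$ and $T_d$ as a cover $\PP^1 \to \PP^1$ is ramified only over $\pm 2$ with all ramification indices dividing $d$); invoking Lemma \ref{Lem: Exceptional<=deg}\ref{b)sepfactors}, a prime $p \nmid d$ is non-exceptional, and moreover when $p \mid d$ one checks $T_d'(x) = dU_{d-1}(x) \equiv 0 \bmod p$, so $p$ is a vanishing prime. Thus every exceptional prime is vanishing, as required.

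Next I would translate Condition \ref{MainCor1.} of Corollary \ref{Cor: CritsinK}. For a non-vanishing prime $\gp = p$ (i.e.\ $p \nmid d$) and a critical point $\theta$, we have $T_d(\theta) = \pm 2$, and since $T_d$ commutes with its iterates, $T_d^n(\theta) = T_{d^{n-1}}(T_d(\theta)) = T_{d^{n-1}}(\pm 2) = \pm 2$ (using $T_e(2) = 2$ and $T_e(-2) = (-1)^e 2$). Hence $T_d^n(\theta) - a \in \{2 - a, -2 - a\}$ for all $n \ge 1$, and $v_p(T_d^n(\theta) - a) \le 1$ for all such $n$ is equivalent to $a \not\equiv \pm 2 \bmod p^2$. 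This is exactly Condition \ref{Chebyi.}. (One should also address the finitely many $p \nmid d$ which might be exceptional non-vanishing primes; but by the separability argument above there are none, or alternatively one checks that the relevant critical values still all lie in $\{\pm 2\}$ so no extra conditions arise — the ``assume no non-vanishing exceptional primes'' simplification from the discussion preceding Corollary \ref{Cor: CritsinK} applies.)

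Then I would handle the vanishing primes $p \mid d$ via Condition \ref{MainCor2.}. Here one must exhibit $g(x) \in \ZZ[x]$ with $T_d(x) - a \equiv g(x)^p \bmod p$ and check $v_p(T_d(\tau) - a) \le 1$ for each root $\tau$ of $g(x)$. Writing $d = pm$ and using the composition identity $T_d(x) = T_p(T_m(x))$, together with the Frobenius-type congruence $T_p(y) \equiv y^p \bmod p$ (which follows from the binomial/recurrence structure of Chebyshev polynomials mod $p$, analogous to $y \mapsto y^p$ being additive), one gets $T_d(x) = T_p(T_m(x)) \equiv T_m(x)^p \bmod p$, so we may take $g(x) = T_m(x) - c$ where $c \in \ZZ$ satisfies $c^p \equiv a \bmod p$ (such $c$ exists and is unique mod $p$, namely $c \equiv a$ by Fermat). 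Actually more carefully $T_d(x) - a \equiv T_m(x)^p - c^p = (T_m(x) - c)^p \bmod p$, so $g(x) = T_m(x) - a$ works. For a root $\tau$ of $g(x)$, i.e.\ $T_m(\tau) = a$, we compute $T_d(\tau) = T_p(T_m(\tau)) = T_p(a)$, so the condition $v_p(T_d(\tau) - a) \le 1$ becomes $T_p(a) \not\equiv a \bmod p^2$, which is Condition \ref{Chebyii.}. Combining the two cases gives the stated equivalence. The main obstacle I anticipate is getting the bookkeeping exactly right for the primes dividing $d$: verifying the congruence $T_p(y) \equiv y^p \bmod p$ cleanly, confirming that $g(x) = T_m(x) - a$ (as opposed to some twist) is the correct choice of the mod-$p$ $p$-th root, and making sure that the roots $\tau$ of $g$ are being fed into the criterion in the form the corollary demands — but all of this is routine once the Chebyshev identities are in hand.
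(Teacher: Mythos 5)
Your plan follows the paper's route through Corollary~\ref{Cor: CritsinK}, and your handling of the vanishing primes is in fact tidier than the paper's: by taking $g(x)=T_{d/p}(x)-a$ as the mod-$p$ $p$-th root (valid since $T_{d/p}(x)^p\equiv T_p(T_{d/p}(x))=T_d(x)\bmod p$ and $a^p\equiv a\bmod p$), a root $\tau$ gives $T_d(\tau)-a=T_p(a)-a$ in one step. The paper instead evaluates at roots of $T_e(x)-a$ with $e=d/p^{v_p(d)}$, lands on $T_{p^k}(a)\not\equiv a\bmod p^2$, and must then invoke Proposition~3.4 of \cite{GassertRadical} (plus a separate $p=2$ case) to reduce to $T_p(a)$; your choice of $g$ bypasses that auxiliary step.

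However, your verification that every non-vanishing prime $p\nmid d$ is non-exceptional has a genuine gap. Lemma~\ref{Lem: Exceptional<=deg}\ref{b)sepfactors} requires each $\QQ$-irreducible factor of $T_d'(x)$ to remain separable modulo $p$, and this is not what your argument establishes. Simple roots in characteristic zero say nothing about the mod-$p$ reduction, and the characteristic-zero ramification structure of the cover $T_d$ addresses the tameness criterion \ref{a)ramcover}, which actually fails for $p=2$ with $d$ odd: the ramification indices over $\pm 2$ equal $2$. Concretely, for $d=3$ one has $U_2(x)=x^2-1\equiv(x+1)^2\bmod 2$, which is inseparable as a polynomial; it is only the individual $\QQ$-irreducible factors $x\pm 1$ that are separable mod $2$. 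The paper closes this by observing that the roots $\zeta_{2d}^k+\zeta_{2d}^{-k}$ of $U_{d-1}(x)$ are \emph{monogenerators} of the real cyclotomic subfields they generate, so their minimal polynomials stay separable at any prime unramified in $\QQ(\zeta_{2d})$, and the ramified primes all divide $d$. Your parenthetical fallback (``the critical values still lie in $\{\pm 2\}$ so no extra conditions arise'') also does not rescue the argument: if $p$ were exceptional, Corollary~\ref{Cor: CritsinK} would not apply and you would have to fall back on Theorem~\ref{Thm: MainLift}, evaluating at roots of the lifted mod-$p$ factors $\phi_i(x)$ rather than at global critical points, which a priori could yield different conditions.
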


\begin{proof}
    Suppose first that $p$ is a prime not dividing $d$. Then $T_d'(x)=dU_{d-1}(x)$ is not identically $0$ in $\FF_p$, so $p$ is a non-vanishing prime. Furthermore, every root of $T_d'(x)$ is of the form $\zeta^k+\zeta^{-k}$ for some $2d$-th root of unity $\zeta$. These elements are well-known to be monogenerators for the field they generate over $\mathbb{Q}$, meaning in particular that their minimal polynomial remains separable modulo all primes not dividing the field discriminant. But prime divisors of the field discriminant have to ramify in $\mathbb{Q}(\zeta_{2d})$, i.e., have to divide $d$. We have therefore shown that all non-vanishing primes are in fact non-exceptional.
    
    The PCF properties of $T_d(x)$ show that $T_d^n(\theta)\in \{-2,2\}$ for any critical point $\theta$, so we can apply Corollary \ref{Cor: CritsinK} for the non-vanishing primes. Thus $T_d^n(x)-a$ is $p$-maximal if and only if $p^2$ does not divide $-2-a$ or $2-a$.

    Next, suppose $p$ is a prime dividing $d$. We have $T_d'(x)\equiv 0 \bmod p$, so $p$ is a vanishing prime. Let $v_p(d)=k$, and note $T_{p^k}(x)\equiv x^{p^k} \bmod p$.
    Let $e=d/p^k$, so $T_d(x)=T_p^k(T_e(x))$.
    In particular 
    \[
    T_d(x)-a \equiv T_e(x)^{p^k}-a \equiv (T_e(x)-a)^{p^k} \bmod p.
    \]To apply Corollary \ref{Cor: CritsinK}, we need to consider $T_d(\tau)- a$, where $\tau\in \overline{\QQ}$ is a root of $T_e(x)-a$. More precisely, $T_d(x)-a$ is $p$-maximal if and only if $T_d(\tau)-a = T_{p}^k(a)-a\in \mathbb{Z}$ is not divisible by $p^2$.
    We reduce to $T_p(a)$ by applying Proposition 3.4 of \cite{GassertRadical}: for $p$ an odd prime, 
    \[
    T_p(b)\equiv T_p (c) \bmod {p^2} \iff b\equiv c \bmod p
    \]
    with $b=T_{p^{k-1}}(a)$ and $c=a$. The extension to $p=2$ is immediate from $T_2(x)=x^2-2$.    
\end{proof}


\begin{remark}
\label{rem:modp2values}
As seen in the above proof, the condition $T_p(a)\not\equiv a\bmod p^2$ is in fact equivalent to $a$ not being in the image of $T_p(x)$ modulo $p^2$, which leaves a total of $p^2-p$ residue classes modulo $p^2$. In particular, Theorem \ref{thm:cheby} then implies that the set of $a\in \mathbb{Z}$ with $(T_d(x),a)$ dynamically monogenic is of density 
\[
\begin{cases}
 \frac{\varphi(d)}{d}\cdot \prod_{\ell}(1-\frac{2}{\ell^2}) & \text{if } d \text{ is even} \\ \frac{\varphi(d)}{d}\cdot \frac{3}{4}\prod_{\ell\ne 2}(1-\frac{2}{\ell^2}) & \text{if } d \text{ is odd}   
\end{cases}\]
where in both cases the product is over primes $\ell$ not dividing $d$.
\end{remark}

To obtain Theorem \ref{thm:cheby}, it remains to prove that the dynamical irreducibility assumption made in Lemma \ref{Lem: ChebyMonoIfIrred} follows from Condition \ref{Chebyii.} of Theorem \ref{thm:cheby}. Note this generalizes \cite{GassertRadical} even in the prime degree case. 
\begin{lemma}
Assume that $a\in \mathbb{Z}$ is such that $T_p(a)\not\equiv a \bmod {p^2}$ for all primes $p|d$. Then $(T_d(x),a)$ is dynamically irreducible.
\end{lemma}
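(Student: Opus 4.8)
The plan is to establish dynamical irreducibility by a factorization-at-a-prime argument, essentially in the style of the unicritical case in Theorem~\ref{thm:unicrit}. First I would note that since $T_d(x) = T_{d_1}\circ T_{d_2}\circ \cdots \circ T_{d_r}(x)$ for any factorization $d = d_1 \cdots d_r$, and since $T_d^n(x) = T_{d^n}(x)$, it suffices to prove dynamical irreducibility for $T_p(x)$ with $p$ prime dividing $d$; more precisely, it is enough to show that $T_d(x) - \alpha$ is irreducible over $K(\alpha)$ whenever $\alpha$ is a root of $T_d^n(x) - a$, and this reduces (by the tower $T_d = T_p \circ T_{d/p}$ composed with the commuting property) to an irreducibility statement for each prime factor $p$ of $d$ separately. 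So the induction hypothesis will be that $f^n(x) - a = T_{d^n}(x) - a$ is irreducible with root $\alpha_n$, and the inductive step is to show $T_d(x) - \alpha_n$ is irreducible over $\mathbb{Q}(\alpha_n)$.

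The key mechanism is the same as in the proof of Theorem~\ref{thm:unicrit}: work modulo a prime $\gp$ above $p$ (where $p \mid d$). Modulo $p$ we have $T_{p^k}(x) \equiv x^{p^k} \bmod p$, so $T_d(x) \equiv T_e(x)^{p^k} \bmod p$ where $e = d/p^k$ and $k = v_p(d)$. Then the relevant Vahlen--Capelli-type criterion (via \cite[Theorem 3.1]{Turnwald}, exactly as invoked in the proof of Theorem~\ref{thm:unicrit}) says that $T_d(x) - \alpha_n$ can be reducible over $\Ocal_{K_n}$ (where $K_n = \mathbb{Q}(\alpha_n)$, and we may assume by induction $\Ocal_{K_n} = \Ocal_K[\alpha_n]$) only if some appropriate power relation holds; the point is that a $p$-th-power relation $g(x)^p \equiv (\text{something linear in } x) \bmod (T_{d^n}(x) - a)$ is impossible modulo $\gp$, because both $g(x)^p$ and $T_{d^n}(x) - a \equiv x^{d^n} - a \bmod \gp$ (using $p \mid d$, hence $p \mid d^n$ and $T_{d^n}(x)\equiv x^{d^n}\bmod p$) are polynomials in $x^p$, so the remainder of division is also a polynomial in $x^p$ — contradicting a nonzero degree-one residue. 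The base case $n=1$: irreducibility of $T_d(x) - a$ itself. Here one uses that $T_d(x) - a$ being reducible would (again via Vahlen--Capelli applied after the standard substitution relating $T_d$ to power maps, or more directly via the structure of Chebyshev pre-images) force $a$ to be congruent to a value in the image of $T_p$ modulo $p^2$ for some $p \mid d$; but Condition~\ref{Chebyii.}, i.e. $T_p(a) \not\equiv a \bmod p^2$, rules this out precisely because, as noted in Remark~\ref{rem:modp2values}, $T_p(a) \not\equiv a \bmod p^2$ is equivalent to $a$ not lying in the image of $T_p$ modulo $p^2$.

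The main obstacle I anticipate is handling the base case and the Vahlen--Capelli reduction cleanly for Chebyshev polynomials rather than pure power maps: $T_d(x)$ is not literally $x^d$, so one cannot directly quote the power-map reducibility criterion. The cleanest route is probably to exploit the semiconjugacy $T_d\bigl(z + z^{-1}\bigr) = z^d + z^{-d}$, which relates $T_d(x) - a$ to $z^{2d} - (\text{stuff})z^d + 1$ over the quadratic extension $\mathbb{Q}(\sqrt{a^2-4})$ or $\mathbb{Q}(w)$ with $w + w^{-1} = a$; reducibility of $T_d(x) - a$ then translates into a power-type condition on $w$, to which Vahlen--Capelli applies, and one traces this back to the mod-$p^2$ condition on $a$. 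One should also double-check the possible exceptional $4$-th-power case for $p = 2 \mid d$ (as in the proof of Theorem~\ref{thm:unicrit}, where $v_\gp(2) = 1$ versus $\gp^2 \mid 2$ must be separated) and confirm Condition~\ref{Chebyii.} still handles it. Finally, one must verify the reduction of $T_d$-dynamical irreducibility to $T_p$-dynamical irreducibility for all $p \mid d$ is valid — i.e. that $T_d^n(x) - a$ irreducible for all $n$ follows once $T_p^m(x) - a$ is irreducible for all $m$ and all $p \mid d$ — which follows from the commuting/composition structure $T_d^n = T_{d^n}$ and the tower of intermediate fields, but deserves an explicit sentence.
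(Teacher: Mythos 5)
Your proposal takes a genuinely different route from the paper, and while the inductive idea could in principle be pushed through, the paper's proof is shorter and avoids the difficulties you flag precisely by leaning harder on the observation you make but do not fully exploit: $T_d^n(x) = T_{d^n}(x)$. Because the prime divisors of $d^n$ coincide with those of $d$, the iterate $T_{d^n}(x)-a$ is itself a Chebyshev polynomial whose irreducibility is controlled directly by Turnwald's theorem — which says $T_m(x)-a$ is irreducible over $\mathbb{Q}$ iff $a$ is not a rational value of $T_p(x)$ for any prime $p\mid m$ and, if $4\mid m$, not of the form $-4c^4+8c^2-2$. So there is no need for an induction on $n$, no need to pass to the tower $K_n$, and no need for a Vahlen--Capelli-type analysis over $K_n$: one verifies once and for all that the hypothesis $T_p(a)\not\equiv a\bmod p^2$ forces $a$ not to be a value of $T_p$ (since that congruence says $a$ is not in the image of $T_p$ mod $p^2$), and that the mod-$4$ consequence $a\not\equiv 2,3 \bmod 4$ rules out the $4\mid d$ exceptional form.

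Two substantive gaps in your inductive plan. First, the opening claim that ``it suffices to prove dynamical irreducibility for $T_p(x)$ with $p$ prime dividing $d$'' is not quite correct: $T_{d^n}(x)-a = T_{p_1^{ne_1}}\circ T_{p_2^{ne_2}}\circ\cdots(x) - a$, and irreducibility of the composite requires irreducibility of inner layers over the extension fields generated by outer layers, not irreducibility of $T_{p_i^m}(x)-a$ over $\mathbb{Q}$. (Your ``more precise'' restatement — show $T_d(x)-\alpha_n$ irreducible over $\mathbb{Q}(\alpha_n)$ — is the right one, so this is a matter of discarding the misleading first formulation.) Second, and more seriously, your inductive step invokes $\Ocal_{K_n}=\Ocal_K[\alpha_n]$, i.e.\ full monogenicity of $T_{d^n}(x)-a$. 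But that conclusion (via Lemma~\ref{Lem: ChebyMonoIfIrred} or Corollary~\ref{Cor: CritsinK}) requires \emph{both} Conditions~\ref{Chebyi.} and~\ref{Chebyii.} of Theorem~\ref{thm:cheby}, whereas the lemma you are proving assumes only Condition~\ref{Chebyii.}. This is repairable: all your mod-$\gp$ argument actually needs is $p$-maximality of $\ZZ[\alpha_n]$ at the primes $p\mid d$, which does follow from Condition~\ref{Chebyii.} alone via the vanishing-prime part of Corollary~\ref{Cor: CritsinK}; but as written the step overclaims, and it is exactly the kind of bookkeeping that the non-inductive proof avoids. Finally, your anticipated ``obstacle'' regarding $T_d$ not being a pure power map and the $4\mid d$ case is real, and the cleanest resolution is simply Turnwald's theorem applied globally — not the semiconjugacy detour you sketch, which would reintroduce the quadratic extension $\mathbb{Q}(\sqrt{a^2-4})$ and further complications.
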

\begin{proof}
Since $T_{d}^n(x) = T_{d^n}(x)$, it suffices to deduce irreducibility of $T_d(x)-a$ for any $d$. By the main theorem of \cite{Turnwald}, the latter is equivalent to $a$ not being a rational value of any polynomial $T_p(x)$ for any prime $p$ dividing $d$. Further, in the case that $4|d$ there is the additional requirement that $a$ is not of the form $-4c^4+8c^2-2$ with $c\in \mathbb{Q}$. (This latter condition is equivalent to the irreducibility of $T_4(x)-a$.)
As already noted, the condition $T_p(a)\not\equiv a \bmod {p^2}$ is equivalent to $a$ not being a value of $T_p(x)$ modulo $p^2$, which implies in particular that $T_p(x)-a$ has no rational root for any $p|d$. Furthermore, in the case where $2|d$, we have the requirement $a\not\equiv 2,3$ mod $4$, which in particular prevents the shape $a=-4c^4+8c^2-2$. This concludes the proof.
\end{proof}

We conclude this section with considerations about totally real monogenic extensions.

\begin{theorem}\label{Thm:RealMono}
Let $d\ge 3$ and $a\in \mathbb{Z}$. The polynomial $T_d(x)-a$ generates a totally real monogenic field if and only if one of the following holds:
\begin{enumerate}[label=\textbf{\arabic*.}, ref=\arabic*]
\item $a=1$ and $d=2^b3^c$ with $b,c\ge 0$; \label{Real1.}
\item $a=-1$ and $d=3^c$ with $c\ge 1$; \label{Real2.}
\item $a=0$ and $d=2^b$ with $b\ge 2$. \label{Real3.}
\end{enumerate}
\end{theorem}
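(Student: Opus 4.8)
The plan is to settle the biconditional in three stages: first restrict the admissible values of $a$, then pin down the admissible $d$ for each such $a$, and finally establish the converse via Theorem~\ref{thm:cheby}. Throughout I read ``$T_d(x)-a$ generates a $(\dots)$ field'' as ``$T_d(x)-a$ is irreducible and the field $\mathbb{Q}[x]/(T_d(x)-a)$ has the stated properties'', since a reducible polynomial does not define a field. \textbf{Step 1: $a\in\{-1,0,1\}$.} For the field to be totally real, all $d$ roots of $T_d(x)-a$ must be real. Writing $x=t+t^{-1}$ with $t\ge 1$ shows that $T_d$ maps $\mathbb{R}\setminus(-2,2)$ into $\{y:|y|\ge 2\}$, and more precisely that for $|a|>2$ the equation $T_d(x)=a$ has at most two real solutions, all lying outside $[-2,2]$. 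Since $d\ge 3$, being totally real forces $a\in[-2,2]\cap\mathbb{Z}=\{-2,-1,0,1,2\}$. The values $a=\pm 2$ are then ruled out by irreducibility: $T_d(2)=2$, so $x-2\mid T_d(x)-2$; and $T_d(-2)=(-1)^d\cdot 2$, so $x+2\mid T_d(x)+2$ when $d$ is odd, while $T_d(x)+2=T_{d/2}(x)^2$ when $d$ is even. Hence $a\in\{-1,0,1\}$.

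\textbf{Step 2 (the ``only if'' direction): the admissible $d$.} The driving observation is the composition identity $T_d=T_p\circ T_{d/p}$ for every prime $p\mid d$: whenever $a=T_p(\beta)$ for some $\beta\in\mathbb{Q}$, the factor $Y-\beta$ of $T_p(Y)-T_p(\beta)$ yields the proper divisor $T_{d/p}(x)-\beta$ of $T_d(x)-a$, so $T_d(x)-a$ is reducible. From the recurrence $T_{n+1}=xT_n-T_{n-1}$ one computes that $(T_n(0))_{n\ge 0}$ is periodic of period $4$ cycling through $2,0,-2,0$; that $(T_n(1))_{n\ge 0}$ is periodic of period $6$ cycling through $2,1,-1,-2,-1,1$; and that $(T_n(-1))_{n\ge 0}$ is periodic of period $3$ cycling through $2,-1,-1$. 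Consequently $T_p(0)=0$ for every odd prime $p$, $T_p(1)=1$ for every prime $p\ge 5$ (those coprime to $6$), and $T_p(-1)=-1$ for every prime $p\ne 3$. Combining with the observation above: if $a=0$, irreducibility of $T_d(x)$ forces $d$ to have no odd prime factor, i.e.\ $d=2^b$ (with $b\ge 2$ as $d\ge 3$); if $a=1$, then $d$ has no prime factor $\ge 5$, i.e.\ $d=2^b3^c$; if $a=-1$, then $d$ has no prime factor $\ne 3$, i.e.\ $d=3^c$ (with $c\ge 1$). This is exactly Cases \ref{Real1.}--\ref{Real3.}.

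\textbf{Step 3 (the ``if'' direction).} For each case I verify Conditions \ref{Chebyi.} and \ref{Chebyii.} of Theorem~\ref{thm:cheby} by direct computation: in Case \ref{Real1.}, $1\not\equiv\pm 2\bmod p^2$ for all $p$ (as $3$ is squarefree), $T_2(1)=-1\not\equiv 1\bmod 4$ and $T_3(1)=-2\not\equiv 1\bmod 9$; in Case \ref{Real2.}, $-1\not\equiv\pm 2\bmod p^2$ for all $p\ne 3$ and $T_3(-1)=2\not\equiv -1\bmod 9$; in Case \ref{Real3.}, $0\not\equiv\pm 2\bmod p^2$ for odd $p$ and $T_2(0)=-2\not\equiv 0\bmod 4$. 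Theorem~\ref{thm:cheby} then gives that $(T_d(x),a)$ is dynamically monogenic, so in particular $T_d(x)-a$ is irreducible and monogenic; and since $a\in\{-1,0,1\}\subset(-2,2)$, the polynomial $T_d(x)-a$ has $d$ distinct real roots, so the field it generates is totally real. This completes the proof.

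\textbf{Main obstacle.} The delicate step is Step 2: recognizing that the sole source of reducibility is a rational point in the value set of a ``sub-Chebyshev'' polynomial $T_p$ with $p\mid d$, and extracting the arithmetic of $T_n(a)$ at $a\in\{-1,0,1\}$ from the linear recurrence. Note that only one implication is needed there (a rational value forces reducibility); the converse---that $d=2^b3^c$, etc., genuinely yields irreducibility---comes for free from Theorem~\ref{thm:cheby} in Step 3, so no appeal to the full Vahlen--Capelli/Turnwald criterion (with its exceptional $4\mid d$ case) is required.
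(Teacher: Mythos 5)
Your proof is correct and follows essentially the same route as the paper's: restrict $a$ to $\{-1,0,1\}$ using the interval $[-2,2]$ and reducibility at $\pm2$, use the composition $T_d = T_p\circ T_{d/p}$ together with the periodic values $T_n(0)$, $T_n(1)$, $T_n(-1)$ to rule out the disallowed $d$, and invoke Theorem~\ref{thm:cheby} for the converse. The only stylistic differences are that you phrase Step 1 via the $x=t+t^{-1}$ parametrization (the paper uses critical values and a root-counting argument) and you run Step 2 uniformly through prime compositions (the paper phrases it slightly less systematically, noting $T_d$ odd for $d$ odd, etc.), but the substance is identical.
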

\begin{proof}
Since $d\ge 3$, the critical values of $T_d(x)$ are exactly $\pm 2$. It is well-known that all of the roots of $T_d(x)$ are real, and since the number of real roots of $T_d(x)-a$ can only change as $a$ passes a critical value, it follows that the splitting field of $T_d(x)\pm 1$ is real as well. On the other hand, a polynomial $f(x)-a$ of degree $\ge 3$ can never have all of its roots real as $a\to \pm \infty$. Thus $T_d(x)-a$ cannot generate a totally real field for $|a|>2$.  Moreover, $T_d(x)-a$ is reducible for $a=\pm 2$ a critical value. This leaves only the values $a\in \{-1,0,1\}$.

 The polynomial $T_d(x)$ is odd when $d$ is odd, hence $T_d(0)=0$. This means that $T_d(x)$ is reducible for any odd integer $d\ge 3$, and thus also for any $d$ with at least one odd prime factor, due to the relation $T_d(T_e(x)) = T_{de}(x)$.

Moreover, if $d\equiv \pm 1 \bmod 6$, then $T_d(1)=1$, since the recurrence $T_{d+1}(x)=xT_{d}(x)-T_{d-1}(x)$ has period $6$ when $x=1$. Again, since $T_d(x)$ is odd when $d$ is odd, one also gets $T_d(-1)=-1$. This means that if $p>3$ is prime, the polynomials $T_p(x)-a$ are also reducible  for $a=\pm 1$, and hence, so are the polynomials $T_d(x)-a$ for any $d$ divisible by at least one prime $p>3$. Lastly, $T_2(x)+1=x^2-1$ is reducible, and thus, so is $T_d(x)+1$ for every even integer $d$. This leaves the values $(d,a)=(2^b3^c, 1)$ for $b,c\ge 0$ arbitrary, $(d,a)=(3^b,-1)$, and $(d,a)=(2^b, 0)$.

 For Cases \ref{Real1.} and \ref{Real2.}, monogenicity follows from Theorem \ref{thm:cheby} since $T_2(x)=x^2-2$ maps $1$ to $-1$, and $T_3(x)=x^3-3x$ maps $1$ to $-2$ and $-1$ to $2$.  Finally, for Case \ref{Real3.}, 
 monogenicity follows since $T_2(x)=x^2-2$ does not fix $0$ modulo $4$.
\end{proof}

Note that the polynomials $T_d(x)-a$ in Theorem \ref{Thm:RealMono} all generate abelian extensions of $\mathbb{Q}$; indeed, as a special case of \cite[Theorem 13]{AndrewsPetsche}, the dynamical Galois group $\textrm{Gal}(T_{d^\infty}(x)-a/\mathbb{Q})$ is abelian exactly for $a\in \{-2,-1,0,1,2\}$. A wider variety of totally real monogenic fields (in particular, nonabelian ones) can be obtained via resorting to non-integral specialization values: 
\begin{theorem}
\label{Thm:RealMono2}
Let $d$ be an integer and $b<-1$ a negative integer such that all of the following are fulfilled:
\begin{enumerate}[label=\textbf{\roman*.}, ref=\roman*]
\item $b$ and $1+4b$ are squarefree. \label{Real2i.}
\item For all prime divisors $p$ of $d$ which do not divide $b$, one has 
$T_p(2+\frac{1}{b})\not\equiv 2+\frac{1}{b} \bmod p^2$. \label{Real2ii.}
\end{enumerate}
Then the field generated over $\mathbb{Q}$ by a root of $T_d(x)-(2+\frac{1}{b})$ is monogenic and totally real.
\end{theorem}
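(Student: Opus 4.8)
The plan is to obtain monogenicity as an application of Theorem~\ref{thm:eisen_dynamic} and total reality from the observation that the specialization value $a := 2 + \tfrac1b$ lies strictly between the two critical values $\pm 2$ of $T_d(x)$.

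First, I would invoke Theorem~\ref{thm:eisen_dynamic} with $K = \QQ$, $f(x) = T_d(x)$, and the fixed point $\gamma = 2$ of $T_d$, so that $\tfrac{1+b\gamma}{b} = 2 + \tfrac1b = a$; here $b < -1$ is a non-unit of $\ZZ$, and $T_d(x)$ is monic and PCF. The hypothesis that every exceptional prime of $T_d(x)$ is vanishing is exactly the observation used in the proof of Lemma~\ref{Lem: ChebyMonoIfIrred}: the non-vanishing primes are the $p \nmid d$, the roots of $T_d'(x)$ are the monogenerators $\zeta + \zeta^{-1}$ (with $\zeta$ a $2d$-th root of unity), and their minimal polynomials stay separable modulo every prime not dividing $d$, so every non-vanishing prime is non-exceptional. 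It then remains to read Conditions~\ref{Eiseni.}, \ref{Eisenii.}, \ref{Eiseniii.} of Theorem~\ref{thm:eisen_dynamic} off from Assumptions~\ref{Real2i.} and~\ref{Real2ii.}. Condition~\ref{Eiseni.} ($v_\gp(b) \le 1$ for all $\gp$) is the squarefreeness of $b$. For Condition~\ref{Eisenii.} I would use that $T_d^k(\theta) \in \{2, -2\}$ for every critical point $\theta$ and every $k \ge 1$, so $1 + (\gamma - T_d^k(\theta))b$ equals $1$ or $1 + 4b$: the first has $\gP$-valuation $0$, and for the second $v_\gP(1+4b) = e(\gP \mid \gp)\, v_p(1+4b) \le e(\gP \mid \gp)$ since $1 + 4b$ is squarefree. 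For Condition~\ref{Eiseniii.}, for a vanishing prime $p \mid d$ with $p \nmid b$, writing $v_p(d) = k$ and $d = p^k e$ with $p \nmid e$, one has $T_d(x) - a \equiv (T_e(x) - a)^{p^k} \bmod p$ (using $T_{p^k}(x) \equiv x^{p^k}$ and $a^{p^k} \equiv a \bmod p$), so a root $\tau$ of the relevant $g(x)$ satisfies $T_e(\tau) = a$, whence $T_d(\tau) - a = T_{p^k}(a) - a \in \ZZ_{(p)}$; Condition~\ref{Eiseniii.} then asks for $v_p(T_{p^k}(a) - a) \le 1$, and exactly as in the proof of Lemma~\ref{Lem: ChebyMonoIfIrred} (Proposition~3.4 of~\cite{GassertRadical} for odd $p$, a direct computation with $T_2(x) = x^2 - 2$ for $p = 2$) one has $T_{p^k}(a) \equiv T_p(a) \bmod p^2$, so this is precisely Assumption~\ref{Real2ii.}. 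Theorem~\ref{thm:eisen_dynamic} then yields that the field generated by a root of $T_d(x) - a$ --- and in fact of $T_{d^n}(x) - a$ for every $n$ --- is monogenic over $\QQ$; in particular $T_d(x) - a$ is irreducible.

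For total reality, I would note that $b < -1$ forces $\tfrac1b \in (-1,0)$, hence $a = 2 + \tfrac1b \in (1,2) \subset (-2,2)$, and that $a$ is not a critical value of $T_d$ (as $a = 2$ is impossible and $a = -2$ would require $b = -\tfrac14$). By the classical description of $T_d$ on $[-2,2]$, the equation $2\cos(d\vartheta) = a$ has exactly $d$ solutions $\vartheta \in (0,\pi)$, giving $d$ distinct real roots $2\cos\vartheta$ of $T_d(x) - a$; since $\deg(T_d(x) - a) = d$, all of its roots are real, and since it is irreducible of degree $d$, all $d$ complex embeddings of the field it defines are real. This gives total reality. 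The only genuinely new input is slight: the monogenicity is bookkeeping against Theorem~\ref{thm:eisen_dynamic} together with the Chebyshev computations already carried out for Theorem~\ref{thm:cheby}, and the total-reality step is elementary once $a \in (-2,2)$ is observed; the one point needing some care is the reduction of Condition~\ref{Eiseniii.} at $p = 2$, where $T_2(x) = x^2 - 2$ must be handled by hand, along with the routine bookkeeping that separates primes dividing $b$ (absorbed into the Eisenstein step inside Theorem~\ref{thm:eisen_dynamic}) from the rest.
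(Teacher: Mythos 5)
Your proof is correct and follows essentially the same route as the paper's: monogenicity via Theorem~\ref{thm:eisen_dynamic} applied with $\gamma = 2$, translating the hypotheses into Conditions~\ref{Eiseni.}--\ref{Eiseniii.} using the Chebyshev bookkeeping from Lemma~\ref{Lem: ChebyMonoIfIrred}, and total reality from $a = 2 + \tfrac1b \in (-2,2)$ exactly as in the proof of Theorem~\ref{Thm:RealMono}. The paper's actual proof is simply terser, pointing back to the Chebyshev computations rather than rewriting them; your additional detail on verifying that non-vanishing primes are non-exceptional and on the reduction of $T_{p^k}(a)\equiv T_p(a)\bmod p^2$ at $p=2$ matches the paper's cited arguments.
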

\begin{proof}
Monogenicity is a straightforward consequence of Theorem \ref{thm:eisen_dynamic} with $\gamma=2$ (which is a fixed point of $T_d(x)$), upon noting again that the postcritical points of $T_d(x)$ are exactly $\pm 2$ and that Condition \ref{Real2ii.} here is the translation of Condition \ref{Eiseniii.} in Theorem \ref{thm:eisen_dynamic}, as already shown in the proof of Theorem \ref{thm:cheby}. That the field is totally real is immediate from $|2+\frac{1}{b}| < 2$ (due to $b$ being negative), as noted in the proof of Theorem \ref{Thm:RealMono}.
\end{proof}


\subsection{PCF compositions of several polynomials}
We describe a family of PCF polynomials arising as compositions of two polynomials in a non-trivial way. In particular, this family is of a different shape than any of the families considered in the previous sections. These polynomials are defined over cyclotomic fields and can be constructed to have any number of critical points. Let $d$ be a positive integer, write $\zeta$ for a primitive $d$-th root of unity, and $\mu$ for a primitive $(2d-1)$-th root of unity if $d$ is odd, or a primitive $(4d-2)$-th root if $d$ is even. Let $K=\QQ(\mu)$. Then 
\[
f(x)=x^{2d}+2\mu x^d=x^d(x^d+2\mu) = (x^2+2\mu x)\circ x^d\qquad \text{with} \qquad
f'(x)=2dx^{d-1}(x^d+\mu)
\]has $d+1$ critical points: $0$ and $(-1)^d\mu^2\zeta^k$ for each $0\leq k<d$. Each non-zero critical point is mapped by $f(x)$ to $-\mu^2$, which is fixed by $f(x)$, while $0$ is mapped to $0$. For each prime $p$ dividing $2d$ and each $a \in \Ocal_K$, $\mu$ is a $p$-th power in $\Ocal_K$ because $(p,2d-1)=1$, and we have
\[
f(x)-a \equiv \left(x^{\frac{2d}{p}} + 2\mu^{\frac 1p}x^{\frac{d}{p}}-a\right)^p \bmod p.
\] When $d=2^k$, this becomes a power of a linear polynomial in $\Ocal_K[x]$, so we can apply Lemma \ref{lem:irred} to find values of $a$ where $(f(x),a)$ is dynamically irreducible, and Corollary \ref{Cor: CritsinK} to restrict to those that are dynamically monogenic.

\begin{theorem}\label{Thm:ManyCritsExample}
For $k\geq 1$, let $d=2^k$, $\mu$ a primitive $(4d-2)$-th root of unity, $K=\QQ(\mu)$ and $f(x)=x^{2d}+2\mu x^d$. 
Then the following hold:
\begin{enumerate}[label=\textbf{\arabic*.}, ref=\arabic*]
\item The set of all $a \in \Ocal_K$ such that $(f(x),a)$ is dynamically irreducible, $(a)$ and $(a+\mu^2)$ are squarefree, and $f(a)-a$ is not divisible by $\gp^2$ for any prime ideal $\gp$ above $2$, is a positive density subset of $\Ocal_K$. \label{ManyCrit1.}
\item For all $a\in \Ocal_K$ with the above properties,  $(f(x),a)$ is dynamically monogenic over $K$. \label{ManyCrit2.}
\end{enumerate}
\end{theorem}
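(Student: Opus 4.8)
\emph{Proof plan.} The strategy is to isolate a handful of structural facts about $f(x)$ and then invoke Corollary~\ref{Cor: CritsinK} for Part~\ref{ManyCrit2.}, and Lemma~\ref{lem:irred} together with a Chinese‑remainder sieve for Part~\ref{ManyCrit1.}. First I would write $f(x)=(x^d+\mu)^2-\mu^2$; as $d=2^k$, this exhibits a decomposition $f=f_1\circ f_2\circ\cdots\circ f_{k+1}$ with $f_1(x)=(x+\mu)^2-\mu^2$ and $f_2=\cdots=f_{k+1}=x^2$, so each $f_i$ is linearly related to $x^2$. From $f'(x)=2dx^{d-1}(x^d+\mu)$ the critical points of $f$ are $0$ and the $d$-th roots of $-\mu$. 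Since $\mu$ has order $4d-2=2(2d-1)$, one has $\mu^{2d-1}=-1$; using this I would check that $f(0)=0$, that $f(\alpha)=-\mu^2$ for every $\alpha$ with $\alpha^d=-\mu$, and that $f(-\mu^2)=-\mu^2$. Hence $f$ is PCF with postcritical set $\Omega=\{0,-\mu^2\}\subset K$ and $K$-rational critical values. For a prime $\gp\nmid 2$, $f'(x)\not\equiv 0\bmod\gp$ and every $K$-irreducible factor of $f'(x)$ divides $x(x^d+\mu)$, which is separable modulo $\gp$ (as $\mu$ is a unit and $d$ is a power of $2$); by part~\ref{b)sepfactors} of Lemma~\ref{Lem: Exceptional<=deg}, such $\gp$ is non-exceptional for $f$, so all exceptional primes are vanishing. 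Finally, $K=\QQ(\zeta_{2^{k+1}-1})$ and the multiplicative order of $2$ modulo $2^{k+1}-1$ is $k+1$, so each $\gp\mid 2$ is unramified with residue field $k_\gp\cong\FF_{2^{k+1}}=\FF_{2d}$; thus $c^{2d}=c$ for all $c\in k_\gp$ and
\[f(x)-a\equiv x^{2d}-a\equiv x^{2d}-a^{2d}=(x-a)^{2d}=\bigl((x-a)^d\bigr)^2\ \bmod\gp,\]
so in Corollary~\ref{Cor: CritsinK} one may take $g_\gp(x)=(x-a)^d\in\Ocal_K[x]$, whose unique root $\tau=a$ lies in $\Ocal_K$.

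\textbf{Part~\ref{ManyCrit2.}.} Suppose $a$ has the three stated properties. The structural facts above verify the standing hypotheses of Corollary~\ref{Cor: CritsinK} for $(f(x),a)$, and dynamical irreducibility holds by Part~\ref{ManyCrit1.}, so it remains to check the corollary's two conditions for all $N$. For a non-vanishing prime $\gp$ and a critical point $\theta$, $f^n(\theta)\in\Omega=\{0,-\mu^2\}$ for every $n\ge 1$, whence $v_\gp(f^n(\theta)-a)$ equals $v_\gp(a)$ or $v_\gp(a+\mu^2)$, each at most $1$ since $(a)$ and $(a+\mu^2)$ are squarefree; this is Condition~\ref{MainCor1.}. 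For $\gp\mid 2$, Condition~\ref{MainCor2.} applied with $g_\gp(x)=(x-a)^d$ and root $\tau=a$ reads $v_\gp(f(a)-a)\le 1$, i.e.\ $\gp^2\nmid f(a)-a$, which is the third hypothesis. Corollary~\ref{Cor: CritsinK} then gives that $f^n(x)-a$ is monogenic for all $n$, i.e.\ $(f(x),a)$ is dynamically monogenic over $K$.

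\textbf{Part~\ref{ManyCrit1.}.} Since $f$ is PCF and a composition of polynomials linearly related to $x^2$, Lemma~\ref{lem:irred} (with $p=2$) yields $n_0$ such that $(f(x),a)$ is dynamically irreducible whenever $\Gal(f^{n_0}(x)-a/K)\cong\Gal(f^{n_0}(x)-t/K(t))$; by Remark~\ref{rem:irred}, this Galois condition holds for all $a\in\Ocal_K$ outside a set of density zero. The remaining requirements are congruence conditions modulo squares of primes: squarefreeness of $(a)$ and of $(a+\mu^2)$ excludes at most two residue classes modulo $\gp^2$ for each $\gp\nmid 2$, while at the finitely many $\gp\mid 2$, because $\frac{d}{dx}(f(x)-x)\equiv-1\bmod\gp$ is a unit, exactly one of the $N(\gp)$ lifts modulo $\gp^2$ of each class modulo $\gp$ satisfies $\gp^2\mid f(a)-a$, so at most $N(\gp)+2<N(\gp)^2$ classes are excluded there. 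A Chinese-remainder sieve as in the proof of Lemma~\ref{lem:posdensity} then shows the set of $a\in\Ocal_K$ meeting all three requirements has positive density, and intersecting with the density-one set of dynamically irreducible $a$ preserves positive density. Together with Part~\ref{ManyCrit2.}, this proves the theorem.

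\textbf{Expected main obstacle.} The substantive work is the structural step: deriving $\mu^{2d-1}=-1$ and hence the exact critical orbit (which gives PCF-ness and $K$-rationality of $\Omega$), spotting the decomposition into maps linearly related to $x^2$ so that Lemma~\ref{lem:irred} is available, and in particular handling the primes above $2$, where the numerical coincidence $N(\gp)=2d$ collapses $f(x)-a$ to $(x-a)^{2d}$ modulo $\gp$ and reduces the vanishing-prime condition to the single congruence $\gp^2\nmid f(a)-a$. Once these are in place, both parts follow by bookkeeping on top of Corollary~\ref{Cor: CritsinK}, Lemma~\ref{lem:irred}, and the sieve underlying Lemma~\ref{lem:posdensity}.
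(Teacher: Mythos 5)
Your proof is correct and takes essentially the same approach as the paper's: establish the structural facts (PCF with postcritical set $\Omega=\{0,-\mu^2\}$, non-exceptionality of non-vanishing primes via separability of $x(x^d+\mu)$, and $f(x)-a$ collapsing to a $p$-th power at primes above $2$), then apply Corollary~\ref{Cor: CritsinK} for Part~\ref{ManyCrit2.}, and Lemma~\ref{lem:irred} with Remark~\ref{rem:irred} plus a Chinese-remainder sieve for Part~\ref{ManyCrit1.}. The only real difference is presentational: the paper routes the density argument through Lemma~\ref{lem:posdensity} (verifying its Conditions~\ref{LemPos1.i.} and~\ref{LemPos1.ii.}), whereas you re-derive the sieve directly; in exchange, your explicit identification $f(x)-a\equiv(x-a)^{2d}\bmod\gp$ at $\gp\mid 2$ (using $N(\gp)=2d$, so that $g_\gp(x)=(x-a)^d$ with sole root $\tau=a$) and the accompanying Hensel-type count of the $N(\gp)$ excluded residue classes mod $\gp^2$ spell out details the paper leaves implicit.
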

\begin{proof}
Since $f(x)$ is a composition of quadratic polynomials, it follows from Lemma \ref{lem:irred} and Remark \ref{rem:irred} that $(f(x),a)$ is dynamically irreducible for all but a density zero set of values $a\in \Ocal_K$.
To apply Corollary \ref{Cor: CritsinK} and Lemma \ref{lem:posdensity}, note that the prime ideals above $2$ in $\Ocal_K$ are vanishing primes, and since for all primes $\gp$ not dividing $2$ the polynomial $x^d+\mu$ remains separable modulo $\gp$, all non-vanishing primes are non-exceptional.
The PCF properties of $f(x)$ and the squarefree conditions on $(a)$ and $(a+\mu^2)$ ensure that the non-vanishing primes condition holds, and here clearly Condition \ref{LemPos1.i.} of Lemma \ref{lem:posdensity} holds (with $|\Omega|=2$).

Shifting to the vanishing primes, the primes above $2$, the field $K$ is also the $(2^{k+1}-1)$-th cyclotomic field, so $(2)$ factors into $\phi(2^{k+1}-1)/(k+1)$ distinct prime ideals, each with inertia degree $k+1$. Let $\gp$ be one such prime. The assumption that $f(a)-a$ is not divisible by $\gp^2$ is precisely what the vanishing prime condition of Corollary \ref{Cor: CritsinK} requires. In order for Condition \ref{LemPos1.ii.} of Lemma \ref{lem:posdensity} to hold, it suffices to find one value $a\in \Ocal_K$ for which $f(a)-a$ has this property. But note that, if $\gp$ divides $(a)$ and $(a)$ is squarefree, then $f(a)-a$ is not divisible by $\gp^2$, since $v_\gp\big(f(a)\big)=2^k+1$ and $v_\gp(a)=1$. Hence the assumption holds if $\gp$ strictly divides $a$. In particular, $a=2$ works for all such $\gp$.

It thus follows from Lemma \ref{lem:posdensity} that a positive density of values $a\in \Ocal_K$ fulfill all the conditions in \ref{ManyCrit1.} and hence render $(f(x),a)$ dynamically monogenic.
\end{proof}

\begin{question}
Is dynamical irreducibility in fact implied by the remaining assumptions of Theorem \ref{Thm:ManyCritsExample}? We are not aware of a counterexample, although our criteria (such as Theorem \ref{thm:unicrit}) are not applicable to compositions of arbitrary quadratic polynomials, as shown by the example in Remark \ref{Rem: CondiesnotIrred}.
\end{question}


\section*{Acknowledgements}
The authors are especially grateful to Sebastian Bozlee for the insightful suggestion that Uchida's criterion could be used to create a general connection between critical points and monogenicity.


\bibliography{Bibliography}
\bibliographystyle{alpha}




\end{document}